\documentclass[11pt,a4paper,oldfontcommands]{article}
\usepackage[utf8]{inputenc}
\usepackage[T1]{fontenc}
\usepackage{microtype}
\usepackage[dvips]{graphicx}
\usepackage{xcolor}
\usepackage{times}

\usepackage[english]{babel}
\usepackage{amsmath,mathtools}
\usepackage{amsfonts}
\usepackage{mathrsfs}
\usepackage{amsthm}
\usepackage{subfigure}
\usepackage{envmath}
\usepackage{amssymb}
\usepackage{dsfont}
\usepackage{bigints}
\usepackage{textgreek}
\usepackage{authblk}
\usepackage{titlesec}
\usepackage{enumitem}
\usepackage{url}

\usepackage[
breaklinks=true,
colorlinks=false,
%linkcolor=blue,urlcolor=black,citecolor=green,% PDF VIEW
%linkcolor=black,urlcolor=black,citecolor=black,% PRINT
bookmarks=true,bookmarksopenlevel=2]{hyperref}

\usepackage{geometry}
% PDF VIEW
\geometry{total={210mm,297mm},
left=15mm,right=15mm,%
bindingoffset=0mm, top=25mm,bottom=25mm}
% PRINT
% \geometry{total={210mm,297mm},
% left=10mm,right=10mm,
% bindingoffset=10mm, top=25mm,bottom=25mm}

% Pour des changements en cas de révision d'un manuscrit
%\usepackage{changes}

\newtheorem{theorem}{Theorem}[section]
\newtheorem{prop}[theorem]{Proposition}
\newtheorem{lem}[theorem]{Lemma}
\newtheorem{cor}[theorem]{Corollary}

\theoremstyle{remark}

% Numérotation équations
\numberwithin{equation}{section}

% partie réelle d'un complexe

% partie imaginaire d'un complexe

% Trace d'un opérateur

% Divergence d'un champ de vecteur
\newcommand{\divg}{\operatorname{div}}

% *d* of diff
\newcommand*\diff{\mathop{}\!\mathrm{d}}

% sup ess

% Support d'une fonction

% Valeur absolue et norme
\DeclarePairedDelimiter\abs{\lvert}{\rvert}%
\DeclarePairedDelimiter\norm{\lVert}{\rVert}%

% Swap the definition of \abs* and \norm*, so that \abs
% and \norm resizes the size of the brackets, and the 
% starred version does not.
\makeatletter
\let\oldabs\abs
\def\abs{\@ifstar{\oldabs}{\oldabs*}}
\let\oldnorm\norm
\def\norm{\@ifstar{\oldnorm}{\oldnorm*}}
\makeatother

% email
\newcommand*{\myemail}[1]{%
    \normalsize\href{mailto:#1}{#1}\par
    }

% Permission de couper les équations sur plusieurs pages
\allowdisplaybreaks

%%%---%%%---%%%---%%%---%%%---%%%---%%%---%%%---%%%---%%%---%%%---%%%---%%%

% Change of titles %

% Section

\titleformat{\section}[block]{\centering \scshape \large}{\thesection.}{0.3\baselineskip}{}
\titlespacing{\section}{0pt}{*5}{*2}

% Subsection

\titleformat{\subsection}[block]{\bfseries}{\thesubsection.}{.5em}{}
\titlespacing{\subsection}{0pt}{*2.5}{*1}

% Subsubsection

\titleformat{\subsubsection}[runin]{\itshape}{\normalfont \thesubsubsection.}{.5em}{}[.]
\titlespacing{\subsubsection}{0pt}{*2.5}{0.5em}

%%%---%%%---%%%---%%%---%%%---%%%---%%%---%%%---%%%---%%%---%%%---%%%---%%%

% Change of titles %

% Section

\titleformat{\section}[block]{\centering \scshape \large}{\thesection.}{0.3\baselineskip}{}
\titlespacing{\section}{0pt}{*5}{*2}

% Subsection

\titleformat{\subsection}[block]{\bfseries}{\thesubsection.}{.5em}{}
\titlespacing{\subsection}{0pt}{*2.5}{*1}

% Subsubsection

\titleformat{\subsubsection}[runin]{\itshape}{\normalfont \thesubsubsection.}{.5em}{}[.]
\titlespacing{\subsubsection}{0pt}{*2.5}{0.5em}

%%%---%%%---%%%---%%%---%%%---%%%---%%%---%%%---%%%---%%%---%%%---%%%---%%%

% With titling :
%
% \preauthor{\large}
% \DeclareRobustCommand{\authoring}{
%     \begin{center}Smith\textsuperscript{\dag} and Wesson\textsuperscript{\#} \\
%     \textsuperscript{\dag}House, \textsuperscript{\#}Barn      \\
%     The OK Corral\end{center}}
% \author{\authoring}
% \postauthor{\par}

\title{Unbounded-energy solutions to the fluid+disk system and long-time behavior for large initial data.}
%\author{Ferriere}
%\date{October 2018}
\date{\vspace{-1cm}}

% With authblk

%\author[*]{Rémi Carles}
\author[]{Guillaume Ferriere}
\author[]{Matthieu Hillairet}

\affil[]{Institut de Recherche Mathématique Avancée, UMR 7501 Université de Strasbourg et CNRS, France, \myemail{guillaume.ferriere@math.unistra.fr}
IMAG,  Univ Montpellier, CNRS, Montpellier, France, \myemail{matthieu.hillairet@umontpellier.fr}}

\begin{document}

\maketitle

\begin{abstract}
In this paper, we analyse the long-time behavior of solutions to a coupled system describing the motion of a rigid disk in a 2D viscous incompressible fluid. Following previous approaches in \cite{Ervedoza_Hillairet__disk_fluid,tucsnaktakahashi2004,WangXin2011} we look at the problem in the system of coordinates associated with the center of mass of the disk. 
Doing so, we introduce a further nonlinearity to the classical Navier Stokes equations. In comparison with the classical nonlinearities, this new term lacks time and space integrability, thus complicating strongly the analysis of the long-time behavior of solutions. 

We provide herein two refined tools : a refined analysis of the Gagliardo-Nirenberg inequalities and a thorough description of fractional powers of the so-called fluid-structure operator \cite{DMS}. On the basis of these two tools we extend decay estimates obtained in \cite{Ervedoza_Hillairet__disk_fluid} to arbitrary initial data and show local stability of the Lamb-Oseen vortex  in the spirit of \cite{Gallay_Maekawa__NS_obstacle,GallayWayne}.
\end{abstract}

\section{Introduction}

In this paper, we pursue the studies on the long-time behavior of solutions to the following model for the motion of a rigid disk inside a viscous incompressible fluid:
\begin{align}
    \frac{\partial u}{\partial t} + (u \cdot \nabla) u - \nu \Delta u + \nabla p = 0& \quad \textnormal{for } x \in \mathcal{F} (t),  \label{eq:momentum}\\
    \divg u = 0& \quad \textnormal{for } x \in \mathcal{F} (t), \label{eq:incompressibility}\\
    u (t, x) = h' (t) + \omega (t) (x - h(t))^\perp& \quad \textnormal{for } x \in \partial B (t), \label{eq:noslip} \\
    m h''(t) = - \int_{\partial B (t)} \Sigma (u,p) n \diff \sigma (x)& 
    \label{eq:linear}\\
    \mathcal{J} \omega' (t) = - \int_{\partial B (t)} (x - h(t))^\perp \cdot \Sigma (u,p) n \diff \sigma (x).&   \label{eq:angular}
 \end{align}
Here $u \in \mathbb R^2$ and $p\in \mathbb R$ stand for the velocity-field/pressure 
unknowns describing the behavior of a homogeneous incompressible viscous fluid. The rigid solid disk occupies the domain $B(t) := B(h(t),1)$ and its motion is described by a translation velocity ${\ell(t)} = h'(t)$ and a rotation velocity $\omega.$ Doing so, we prescribe the evolution of the fluid+disk system by integrating the incompressible Navier-Stokes equations \eqref{eq:momentum}-\eqref{eq:incompressibility} in the fluid domain $\mathcal F(t) := \mathbb R^2 \setminus \overline{B(t)}$ and the Newton
equation of solid dynamics \eqref{eq:linear}-\eqref{eq:angular}.
 We emphasize that the motion of the fluid and the solid are both unknowns. The system is complemented with no-slip interface conditions 
\eqref{eq:noslip} and transmission of normal stress. The stress tensor $\Sigma(u,p)$ appearing then
in the Newton laws is  the fluid stress tensor
\begin{equation*}
    \Sigma (u,p) = - p \operatorname{Id} + 2 \nu D(u),
\end{equation*}
with
\begin{equation*}
    D(u)_{i, j} = \frac{1}{2} \Bigl( \frac{\partial u_i}{\partial x_j} + \frac{\partial u_j}{\partial x_i} \Bigr), \qquad 1 \leq i, j \leq 2.
\end{equation*}
We remind that $\nu >0$ stands for the fluid viscosity and that, due to the incompressibility condition, the viscous operator appearing in \eqref{eq:momentum} reads:
$$
\nu \Delta u -\nabla p = {\rm div} \Sigma(u,p) 
$$
where, by convention, the divergence operator of a matrix is computed row-wise. 
By scaling arguments, we prescribed that the density of the fluid is constant equal to $1$ and that
the solid has radius $1.$ Below, it appears also that the viscosity $\nu$ has only an influence through a time-scaling so we fix $\nu=1$ for simplicity.
The quantity $m$ and $\mathcal J$ appearing in the Newon laws represent respectively the mass and inertia of the solid disk. In the 2D case under consideration here, the inertia $\mathcal J$ is time-independent. The symbol $n$ appearing in the integrals of 
\eqref{eq:linear}-\eqref{eq:angular} stands for the normal to $\partial B(t)$ inward $B(t).$
We keep the convention that the normal is directed outward the fluid domain throughout the paper.  { Like in \cite{Ervedoza_Hillairet__disk_fluid}, our} motivation for studying this system is
to analyse the energy exchange between the solid body and the rigid disk, we do not include any forcing term such as gravity in the system.  {We point out that, by a standard scaling argument, understanding the long-time behavior of solutions is also related to the small-body limit \cite{HeIftimie}.  Our analysis might then be adapted  to this second problem  as in \cite{LacaveTakahashi}.}
\medskip

Systems like \eqref{eq:momentum}--\eqref{eq:angular} coupling ODEs and PDEs and describing the motion of solid bodies inside a viscous fluid have been the subject of numerous studies in  the past years. Regarding the specific case of 
one rigid disk in an unbounded viscous fluid, the Cauchy theory for finite-energy initial data is studied in \cite{tucsnaktakahashi2004}. The authors remark therein that solutions to \eqref{eq:momentum}--\eqref{eq:angular} satisfy the {\em a priori} estimate:
$$
\dfrac{1}{2} \left[ m |\ell(t)|^2 + \mathcal{J}\omega(t)^2 + \int_{\mathcal F(t)} |u(t,\cdot)|^2 \right]
+\int_0^t \int_{\mathcal F(s)} |D(u)|^2 =\dfrac{1}{2} \left[ m |\ell(0)|^2 + \mathcal{J}\omega(0)^2 + \int_{\mathcal F(0)} |u(0,\cdot)|^2 \right] 
$$
This opens the way to the construction of  global-in-time finite-energy solution for arbitrary data. To this purpose, the authors operate the change of unknowns:
\begin{equation}
    v(t, x) = u (t, x - h(t)), \qquad \tilde{p} = p (t, x - h(t)), \qquad \ell (t) = h'(t), \qquad \omega (t) = \omega (t).
\end{equation}
and obtain the new system:
\begin{align}
    \frac{\partial v}{\partial t} + ((v - \ell(t)) \cdot \nabla) v - \Delta v + \nabla \tilde{p} = 0& \quad \textnormal{for } x \in \mathcal{F}_0, \label{eq:NS_mod1} \\
    \divg v = 0& \quad \textnormal{for }  x \in \mathcal{F}_0, \\
    v (t, x) = \ell (t) + \omega (t) \, x^\perp& \quad \textnormal{for }  x \in \partial B_0, \\
    m \ell'(t) = - \int_{\partial B_0} \Sigma (v,\tilde{p}) n \diff \sigma (x)& \\
    \mathcal{J} \omega' (t) = - \int_{\partial B (t)} x^\perp \cdot \Sigma (v,\tilde{p}) n \diff \sigma (x),&  \label{eq:NS_mod6}
 \end{align}
where $B_0  = B(0,1)$ and $\mathcal F_0 = \mathbb R^2 \setminus \overline{B}_0.$ 
With this change of unknowns, we have now a problem in a fixed geometry
that we can complete prescribing an initial condition. Setting an inital time $t_0 \geq 0$ that can be strictly positive, this condition reads:
\begin{equation}
   v|_{t=t_0} = v_0 \quad \textnormal{for } x \in \mathcal{F}_0, \quad
    \ell (t_0) = \ell_0, \quad \omega (t_0) = \omega_0. \label{eq:NS_mod7}
\end{equation}
 Despite \eqref{eq:NS_mod1}-\eqref{eq:NS_mod6} is an autonomous problem, we introduce here a generalized Cauchy-problem with arbitrary initial time. This  will  have an influence below because of our choice for initial data. We recall here also that the pressure $\tilde{p}$ can be seen as the Lagrange-multiplier of the divergence-free condition involved in the system above. For this reason, there is no initial condition on $\tilde{p}.$ In our formalism, the pressure will also be a secondary unknown that is taken rid {\em via} a projector argument and that can be recovered {\em a posteriori.} For all these reasons, we state  our results in terms of $(v,\ell,\omega)$ only.
For instance, in \cite{tucsnaktakahashi2004}, the authors consider the case $t_0=0.$ They consider initial data $(v_0,\ell_0,\omega_0) \in L^2(\mathcal F_0) \times \mathbb R^2 \times \mathbb R$ such that :
\begin{equation} \label{eq_compatinitdata}
{\rm div}v_0 = 0 \text{ in $\mathcal F_0$} \qquad v_0 \cdot n = (\ell_0 + \omega_0 x^{\bot}) \cdot n \text{ on $\partial B_0$}
\end{equation}
and
construct global-in-time finite-energy solutions in the sense that:
\begin{itemize}
\item $v \in C([0,\infty);L^2(\mathcal F_0)-w)$ with  $\nabla v \in L^2((0,\infty); L^2(\mathcal F_0))$
\item $(\ell,\omega) \in C([0,\infty);\mathbb R^3)$
\item $(v,\ell,\omega)$ solve \eqref{eq:NS_mod1}-\eqref{eq:NS_mod7} with the associated {\em a priori} estimate.
\end{itemize}
$$
\dfrac{1}{2} \left[ m |\ell(t)|^2 + \mathcal{J}\omega(t)^2 + \int_{\mathcal F_0} |v(t,\cdot)|^2 \right]
+ \int_{\mathcal F_0} |D(v)|^2 =\dfrac{1}{2} \left[ m |\ell(0)|^2 + \mathcal{J}\omega(0)^2 + \int_{\mathcal F_0} |v(0,\cdot)|^2 \right] 
$$
The results are extended to $L^q$-initial data in \cite{Ervedoza_Hillairet__disk_fluid}.

\medskip

\medskip

Solutions to the Cauchy-problem are constructed via a perturbative approach. First the authors consider the linearized system:
\begin{align}
    \frac{\partial v}{\partial t}  - \Delta v + \nabla \tilde{p} = 0& \quad \textnormal{for } t \in (0, \infty), x \in \mathcal{F}_0, \label{eq:Stokes_modfirst} \\
    \divg v = 0& \quad \textnormal{for } t \in (0, \infty), x \in \mathcal{F}_0, \\
    v (t, x) = \ell (t) + \omega (t) \, x^\perp& \quad \textnormal{for } t \in (0, \infty), x \in \partial B_0, \\
    m \ell'(t) = - \int_{\partial B_0} \Sigma (v,\tilde{p}) n \diff \sigma (x)& \quad \textnormal{for } t \in (0, \infty), \\
    \mathcal{J} \omega' (t) = - \int_{\partial B (t)} x^\perp \cdot \Sigma (v,\tilde{p}) n \diff \sigma (x)& \quad \textnormal{for } t \in (0, \infty),  \label{eq:Stokes_modlast}
 \end{align}
 They show that this system can be rewritten into an infinite-dimensional 
 differential system 
\begin{equation} \label{eq_HY}
 {
\partial_t V + AV = 0,}
\end{equation}
by  constructing an unknown $V$ encoding simultaneously $(v,\ell,\omega)$
and a specific unbounded operator $A$ (that we call fluid-strucure operator following \cite{DMS}). We give more details on these constructions in the next section. Finite-energy solutions to the linearized system are obtained by remarking that $A$ is an accretive positive self-adjoint operator which implies the existence of a contraction semi-group $(S(t))_{t>0}$ solving \eqref{eq_HY}. The nonlinear system can be then interpreted in the form of a nonlinear infinite-differential system:
$$
{ \partial_t V + A V = F(V),}
$$ 
and mild-solutions are constructed via a Kato-type argument. Since these mild-solutions are finite-energy solutions and finite-energy solutions are unique, this yields "the" finite-energy solution. Actually, this argument is performed on regularized $H^1$ initial data  in \cite{tucsnaktakahashi2004} (and finite-energy solutions are obtained then by a compactness argument). But, as we shall see below (see {\bf Theorem \ref{th:cauchy}}), the reasoning extends to $L^2$
initial data. { An alternative approach relying on Leray-type arguments is also provided in \cite{Bravin} in case of Navier-type slip boundary conditions on the fluid/solid interface.  Since our results rely strongly on decay estimates of the semi-group $A$ we shall stick to this mild-solution approach herein.}

\medskip

The long-time behavior of solutions to \eqref{eq:NS_mod1}-\eqref{eq:NS_mod7} is tackled in \cite{Ervedoza_Hillairet__disk_fluid} by the second author in collaboration with S. Ervedoza and C. Lacave.  Firstly the properties of the fluid-structure semi-group $(S(t))_{t \geq 0}$ are studied in a non-Hilbert setting which yields explicit bounds for the large-time decay of $L^q$-initial data and an explicit first order term for sufficiently localized initial data. {\em Via} perturbative arguments, these decay rates are extended to the  finite-energy solutions to  the full nonlinear problem \eqref{eq:NS_mod1}--\eqref{eq:NS_mod7} for initial data such that
 $v_0 \in L^q(\mathcal F_0) \cap L^2(\mathcal F_0)$
for some $q\in (1,2)$ with
$\|v_0\|_{L^2(\mathcal F_0)} + |\ell_0| + |\omega_0| $ sufficiently small (depending only on $q$).

\medskip

In this paper we pursue the computations of \cite{Ervedoza_Hillairet__disk_fluid} in two directions. Firstly, we extend the decay-rate computation of finite-energy solutions to \eqref{eq:NS_mod1}--\eqref{eq:NS_mod7} for arbitrary data in $L^q \cap L^2$. Namely, our first result reads:
\begin{theorem} \label{thm_main3}
Let $q \in (1,2)$ and assume that $t_0=0$ and that the initial data $(v_0,\ell_0,\omega_0) \in L^2(\mathcal F_0) \times \mathbb R^2 \times \mathbb R$ satisfy the compatibility condition \eqref{eq_compatinitdata}  and the further condition $v_0 \in L^q(\mathcal F_0).$ Then, the unique finite-energy solution  $(v,\ell,\omega)$ of  \eqref{eq:NS_mod1}-\eqref{eq:NS_mod7}  satisfies:
\begin{align}
&  \sup_{t >0} t^{\frac 1p - \frac 1q}\|v(t)\|_{L^p(\mathcal F_0)} <\infty \quad \forall \,p \in (2,\infty) \label{eq:decay1}\\
& \sup_{t >0} t^{\frac 1q}|\ell(t)| < \infty   \label{eq:decay2}
\end{align}
\end{theorem}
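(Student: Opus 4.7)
The plan is to reduce the arbitrary-data statement to the small-data result already established in \cite{Ervedoza_Hillairet__disk_fluid}: I would produce a time $T>0$ after which the finite-energy solution satisfies the smallness hypothesis of that paper, and then re-initialize the Cauchy problem at $t_0=T$. Concretely, pick $\varepsilon>0$ below the smallness threshold of \cite{Ervedoza_Hillairet__disk_fluid} and prove (i) that there exists $T=T(\varepsilon)$ such that $\|v(T)\|_{L^2(\mathcal F_0)}+|\ell(T)|+|\omega(T)|\leq \varepsilon$, and (ii) that $\|v(T)\|_{L^q(\mathcal F_0)}$ is bounded by a constant depending on the initial data and $T$. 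Applying \cite{Ervedoza_Hillairet__disk_fluid} at $t_0=T$ then delivers the decay estimates for $t\geq T$, while on the bounded interval $(0,T]$ the direct $L^q\to L^p$ bound $\|S(t)f\|_{L^p(\mathcal F_0)}\lesssim t^{-(1/q-1/p)}\|f\|_{L^q(\mathcal F_0)}$ of the fluid-structure semi-group, applied to the mild formulation, yields the desired $t^{1/p-1/q}$ control up to a $T$-dependent constant. The bound \eqref{eq:decay2} on $\ell$ is handled analogously using the decay of the $\ell$-component of $S(t)$ together with the structure of the Newton law.

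\textbf{Decay of the total energy to zero.} For step (i), the central tool is the global energy identity: $E(t):=m|\ell(t)|^2+\mathcal J\omega(t)^2+\|v(t)\|_{L^2(\mathcal F_0)}^2$ is non-increasing and $\int_0^\infty \|D(v(s))\|_{L^2(\mathcal F_0)}^2\,ds$ is finite. I would extract a sequence $t_n\to\infty$ along which $D(v(t_n))\to 0$ strongly in $L^2(\mathcal F_0)$ and, using the uniform boundedness of $(v(t_n),\ell(t_n),\omega(t_n))$ in $L^2\times \mathbb R^3$, pass to a weak limit. A Korn-type inequality on $\mathcal F_0$ combined with the trace condition $v|_{\partial B_0}=\ell+\omega x^\perp$ forces the only divergence-free $L^2$ field with vanishing $D(v)$ compatible with a rigid motion on $\partial B_0$ to be zero, so the weak limit vanishes. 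Adapting the Masuda-Kato argument to the coupled system then upgrades weak convergence of $(v(t_n),\ell(t_n),\omega(t_n))$ to strong convergence in energy, which by monotonicity of $E$ gives $E(t)\to 0$ as $t\to \infty$.

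\textbf{Propagation of the $L^q$ bound.} For step (ii), starting from the mild formulation
\begin{equation*}
v(t)=S(t)v_0-\int_0^t S(t-s)\,\mathbb{P}\bigl[((v-\ell)\cdot\nabla)v\bigr](s)\,ds,
\end{equation*}
I would combine the $L^q\to L^q$ boundedness of $(S(t))_{t\geq 0}$, the energy control on $v$ and $\ell$, and the refined Gagliardo-Nirenberg inequalities together with the fractional-power description of the fluid-structure operator announced in the introduction to close a Gronwall-type estimate for $\|v(t)\|_{L^q(\mathcal F_0)}$ on $[0,T]$. The problematic contribution is the moving-frame term $-(\ell(s)\cdot\nabla)v(s)$, which does not benefit from the divergence structure of the classical Navier-Stokes nonlinearity and for which $\ell$ only enjoys an a priori $L^\infty$-in-time bound coming from the energy.

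\textbf{Main obstacle.} Once (i) and (ii) are secured, matching the re-initialized small-data decay on $[T,\infty)$ with the local semi-group bound on $(0,T]$ covers all times $t>0$. The principal technical difficulty lies in the $L^q$ propagation of step (ii): controlling the moving-frame contribution $-(\ell\cdot\nabla)v$ uniformly on $[0,T]$ at the $L^q$ level is precisely where the refined Gagliardo-Nirenberg inequalities and the fine description of fractional powers of the fluid-structure operator developed in this paper are designed to play the decisive role, replacing the classical bilinear estimates that would otherwise fail for this unstructured, only-bounded-in-time term.
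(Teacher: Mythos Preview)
Your overall architecture --- show the $\mathcal L^2$ norm eventually becomes small, propagate the $\mathcal L^q$ bound, then re-initialize and invoke the small-data result of \cite{Ervedoza_Hillairet__disk_fluid} --- is exactly the scheme of Section~\ref{sec:bounded}. The gap is in your step~(i). Extracting $t_n\to\infty$ with $\|D(v(t_n))\|_{L^2(\mathcal F_0)}\to 0$ is fine, and the only divergence-free $L^2(\mathcal F_0)$ field with $D(v)=0$ is indeed zero, so the weak limit vanishes. But in a 2D exterior domain there is no compact embedding $H^1(\mathcal F_0)\hookrightarrow L^2(\mathcal F_0)$, and the Masuda--Kato mechanism you invoke cannot upgrade weak convergence to strong $L^2$ convergence here: boundedness of $\|v(t_n)\|_{L^2}$ together with $\|D(v(t_n))\|_{L^2}\to 0$ simply does not force $\|v(t_n)\|_{L^2}\to 0$. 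This is the classical obstruction to $L^2$ decay for Navier--Stokes in 2D exterior domains, and it is precisely why \cite{KozonoOgawa-decay,Gallay_Maekawa__NS_obstacle} work with \emph{negative} fractional powers of the Stokes operator. The paper does the same (Proposition~\ref{prop:decay}): set $U=A^{-\mu}V$ with $\mu<1/q-1/2$, prove an energy inequality for $U$ yielding $\sup_t\|U(t)\|_{\mathcal L^2}<\infty$ and $\int_0^\infty\|A^{1/2}U\|_{\mathcal L^2}^2<\infty$, extract $t_n$ with $\|A^{1/2}U(t_n)\|_{\mathcal L^2}\to 0$, and interpolate $\|V(t_n)\|_{\mathcal L^2}=\|A^{\mu}U(t_n)\|_{\mathcal L^2}\lesssim\|U(t_n)\|_{\mathcal L^2}^{1-2\mu}\|A^{1/2}U(t_n)\|_{\mathcal L^2}^{2\mu}\to 0$. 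So the fractional-power machinery is deployed in step~(i), not in the $L^q$ propagation where you place it; the $L^q$ propagation (first half of Proposition~\ref{prop:Lq}) is handled by direct Duhamel and semigroup estimates.

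There is also a missing ingredient you do not mention. To close the Gronwall on $U=A^{-\mu}V$, the nonlinearity produces a factor $(|\ell_v|+\|\nabla V\|_{L^2})^2$ in front of $\|U\|_{\mathcal L^2}^2$, and one must know $\int_0^\infty|\ell_v(s)|^2\,ds<\infty$. The energy identity gives this for $\nabla V$ but not for $\ell_v$; the paper proves $\ell_v\in L^2((0,\infty))$ separately (second half of Proposition~\ref{prop:Lq}) via the Duhamel formula for $\ell$, the duality decay of Lemma~\ref{lem:duality_est}, and a Hardy--Littlewood--Sobolev convolution argument in the time variable. This step is indispensable and is where the moving-frame term $\ell\cdot\nabla v$ is actually tamed. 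Incidentally, the refined Gagliardo--Nirenberg inequality with the explicit $\sqrt{p}$ constant (Lemma~\ref{lem:GN_const}) is not used in the proof of Theorem~\ref{thm_main3} at all; it enters only in the logarithmic energy estimate for the Oseen-vortex problem in Section~\ref{sec:Cauchy}.
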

  
 This result must be compared with \cite[Theorem 1.3]{Ervedoza_Hillairet__disk_fluid} where a further smallness is required. We point out that, like in \cite{Ervedoza_Hillairet__disk_fluid}, our result states that  the decay of solutions to the nonlinear problem \eqref{eq:NS_mod1}-\eqref{eq:NS_mod7} is the same as the decay of solutions of the linearized system \eqref{eq:Stokes_modfirst}-\eqref{eq:Stokes_modlast}. However, we are still not able to extract a leading term for the nonlinear system.
 
 \medskip
 
 The proof of this first result is based on adapting the global stability argument in \cite{Gallay_Maekawa__NS_obstacle}. Namely, we use that the fluid-structure operator $A$ underlying the resolution of the linearized problem \eqref{eq:Stokes_modfirst}-\eqref{eq:Stokes_modlast} is self-adjoint and positive. We can then construct the 
 fractional powers $A^{\mu}$ for $\mu \in (-1,1)$ and analyze their ranges and domains. To extract a decay of any solution to \eqref{eq:NS_mod1}-\eqref{eq:NS_mod7}, we first compute an energy estimates on $U = A^{-\mu} V$
 for a $\mu$ adapted to the integrability of the initial data $v_0.$
One key new difficulty is that the nonlinearities in \eqref{eq:NS_mod1}--\eqref{eq:NS_mod7} involve the term $\ell \cdot \nabla v.$ It turns out that handling this term requires to prove a similar time-integrability of $\ell$ as the one of $\nabla v$ and in particular that 
$\ell \in L^2((0,\infty)).$ This property is obtained in a first independent step.
  
\medskip

In a second direction, we also extend the analysis to infinite energy initial data. Indeed, similarly to the introductory remark of \cite{Gallay_Maekawa__NS_obstacle} in the case of a still particle, one may observe that  the total amount of the fluid vorticity $\omega := \partial_2 v_1 - \partial_1 v_2$ in solutions to \eqref{eq:NS_mod1}-\eqref{eq:NS_mod6} has to vanish. This property fails however in many contexts. We recall that, in the absence of a disk, a central object is  the normalized Lamb-Oseen vortex: 
\begin{equation} \label{eq:Oseen_vort}
    \Theta (t,x) = \frac{1}{2 \pi} \frac{x^\perp}{\abs{x}^2} \Bigl( 1 - e^{- \frac{\abs{x}^2}{4 (1+t)}} \Bigr), \qquad x \in \mathbb{R}^2 \setminus \{ (0,0) \}, \quad t \geq 0,
\end{equation}
since any solution to the Navier Stokes equations  on $\mathbb R^2$ converges to a multiple of this profile  given by the initial mass of the vorticity \cite{GallayWayne}. This result is extended to the Navier Stokes equations outside a still obstacle \cite{Gallay_Maekawa__NS_obstacle} showing that any
bounded-energy perturbation of a small Lamb-Oseen vortex behaves in large-time like the Lamb-Oseen vortex. 

\medskip

We consider herein the local stability of the Lamb-Oseen vortex $\Theta$ in the case of the full fluid+disk problem \eqref{eq:NS_mod1}--\eqref{eq:NS_mod6}. For this, we first see that $\Theta$ can be written under the form $\Theta (t,x) = g(t,\abs{x}^2) \, x^\perp$, where
\begin{equation*}
    g(t,r) = \frac{1 - e^{- \frac{r}{4 (1+t)}}}{2 \pi r}.
\end{equation*}
Hence, the Lamb-Oseen vortex on $\partial B_0$ is a pure rotation. 
We can then  assume initial data are of the form
\begin{equation}
    v_0 = \alpha \Theta(t_0,\cdot) + w_0  \qquad \ell_0 = \ell^0_{w}  \qquad \omega_0 = \dfrac{\alpha}{2\pi} \left( 1 - \exp(-1/4(1+t_0)) \right) + \omega^{0}_{w}
    \label{eq_initdatw0}  
\end{equation}
where $w_0$ is localized in space  and 
\begin{equation} \label{eq_compat}
w_0 = \ell^0_w + \omega_w^0  
x^{\bot}  \text{ on $\partial B_0$}.
\end{equation}
Furthermore, we remark (or recall) that the Lamb-Oseen vortex yields a solution to 
the Navier Stokes equations with an explicit pressure:
\begin{equation*}
\nabla \Pi = \alpha^2 \dfrac{x}{|x|^2} |\Theta(t,x)|^2 \quad \forall \, x \in \mathbb R^2 \setminus \{(0,0)\}.
\end{equation*}
Hence, plugging the ansatz:
\begin{equation} \label{eq:ansatz}
%\left\{
\begin{aligned}
    & v (t,x) = \alpha \Theta (t,x) + w (t,x), && 
    \tilde{p} (t,x) = \alpha^2 \Pi (t,x) + q (t,x), \\
    & \ell_v (t,x) = \ell_w (t,x) && 
    \omega_v (t,x) = \alpha g(t, 1) + \omega_w (t,x).
    \end{aligned}
 %  \right.
\end{equation}
into \eqref{eq:NS_mod1}-\eqref{eq:NS_mod7}, we obtain the perturbed system:
\begin{align}
    \frac{\partial w}{\partial t} + ((w - \ell_w (t)) \cdot \nabla) w - \Delta w + {\nabla { q}}& = - \alpha \Bigl[ (\Theta \cdot \nabla) w + ((w - \ell_w (t)) \cdot \nabla) \Theta \Bigr]&& \textnormal{in } (0, \infty) \times \mathcal{F}_0, \label{eq:init_prob1} \\
    \divg w&  = 0&& \textnormal{in } (0, \infty) \times  \mathcal{F}_0, \\
    w (t, x) & = \ell_w (t) + \omega_w (t) x^\perp&& \textnormal{on }  (0, \infty) \times \partial B_0, \label{eq:init_prob3} \\
    m \ell_w'(t)& = - \int_{\partial B_0}{ \Sigma (w,{ q})} n \diff \sigma (x)&& \textnormal{on } (0, \infty), \\
    \mathcal{J} \omega_w' (t) &= - \int_{\partial B (t)} x^\perp \cdot { \Sigma (w,{ q})} n \diff \sigma (x) + \alpha \zeta (t)&& \textnormal{on } (0, \infty), \label{eq:init_prob5} \\
    w|_{t=0} &= w_0&& \textnormal{on }  \mathcal{F}_0, \\
    \ell_w (0) & = \ell_w^0, \, \omega_w (0) = \omega_w^0 .& &\label{eq:init_prob7}
\end{align}
with an explicit source term $\zeta.$ We detail this computation in {\bf Section \ref{sec:Cauchy}}. We can then rely on the study of the 
fluid-structure semi-group to construct a mild-solution 
to \eqref{eq:init_prob1}--\eqref{eq:init_prob7}:
\begin{equation} \label{eq:Duhamel}
W(t) = S(t-t_0) W_0 + \int_{t_0}^{t} S(t-s) F_{\alpha}(s){\rm d}s
\end{equation}
with a source term $F_{\alpha}$ to be made precise later on.

\medskip

In this  direction, our first result shows that this Duhamel-formula yields a suitable solution to our problem: 
\begin{theorem} \label{thm_main1}
Let $(\alpha,t_0) \in \mathbb R \times [0,\infty)$ and { $(w_0, \ell^0_w,\omega_w^0) \in L^2(\mathcal F_0) \times \mathbb R^2 \times \mathbb R$ such that \eqref{eq_compat} is satisfied.} Then,  the Duhamel formula \eqref{eq:Duhamel} yields a triplet $(w,\ell_w,\omega_w)$ such that:
\begin{enumerate}
\item { $w \in C([t_0,\infty);L^2(\mathcal F_0)) \cap C((t_0,\infty);H^1(\mathcal F_0)),$ with $w \in L^2_{loc}((t_0,\infty);H^2(\mathcal F_0))$}
\item { $(\ell_w,\omega_w) \in C^1([t_0,\infty); \mathbb R^3)$}
\item $(w,\ell_w,\omega_w)$ is a solution to \eqref{eq:init_prob1}--\eqref{eq:init_prob7} 
\end{enumerate}
\end{theorem}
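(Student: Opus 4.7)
The plan is to prove this by applying a Kato-type fixed-point argument directly to the Duhamel formula \eqref{eq:Duhamel}, following the scheme indicated in the introduction but adapted to the additional $\Theta$-terms. I would first recast \eqref{eq:init_prob1}--\eqref{eq:init_prob7} in the abstract form $\partial_t W + AW = F_\alpha(W,t)$ where $W$ encodes $(w, \ell_w, \omega_w)$ in the functional framework of the fluid-structure operator $A$, and $F_\alpha$ collects three contributions: the Navier--Stokes-type nonlinearity $((w-\ell_w)\cdot\nabla)w$ (suitably projected), the Lamb-Oseen linear couplings $\alpha[(\Theta \cdot \nabla)w + ((w-\ell_w)\cdot\nabla)\Theta]$, and the explicit scalar source $\alpha \zeta(t)$ of \eqref{eq:init_prob5} whose form is computed in \textbf{Section \ref{sec:Cauchy}}.

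For a short time-horizon $\tau > 0$, local existence would follow from a contraction argument in a Kato-type space
$$
X_\tau = \Bigl\{ W \in C([t_0, t_0+\tau]; L^2) \ : \ \sup_{s \in (t_0,t_0+\tau]} (s-t_0)^{1/2} \|W(s)\|_{H^1} < \infty \Bigr\},
$$
relying on the smoothing bounds $\|S(t)\|_{L^2 \to H^1} \lesssim t^{-1/2}$ and $\|S(t) \mathbb{P}\, \divg\|_{L^2 \to L^2} \lesssim t^{-1/2}$ for the fluid-structure semi-group $(S(t))_{t\geq 0}$ recalled in the previous section. The bilinear Navier--Stokes term and the scalar source are handled classically; the main obstacle lies in the linear $\Theta$-coupling, since $\Theta(t,\cdot) \notin L^2(\mathcal F_0)$ and only satisfies the pointwise bounds $|\Theta| \lesssim \min(1, |x|^{-1})$ and $|\nabla \Theta| \lesssim |x|^{-2}$ at infinity. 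To circumvent this, I would use a Hardy-type inequality on $\mathcal F_0$: for $w \in H^1(\mathcal F_0)$ with trace $\ell + \omega x^\perp$ on $\partial B_0$, one has $\|w/\langle x \rangle\|_{L^2} \lesssim \|\nabla w\|_{L^2} + |\ell|+|\omega|$, which combined with the pointwise decay of $\Theta$ and $\nabla \Theta$ controls both $(\Theta \cdot \nabla) w$ and $(w \cdot \nabla) \Theta$ by the $X_\tau$-norm with a prefactor going to zero as $\tau \to 0$.

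Global extension then proceeds through a blow-up alternative driven by an energy identity obtained by testing the equation against $w$: the key observation is that $\Theta$ is divergence-free and its trace on $\partial B_0$ is the pure rotation $g(t,1) x^\perp$, so the contribution of $(\Theta \cdot \nabla)w$ to the energy balance reduces to harmless boundary terms that are absorbed by the rigid-motion part, while $((w-\ell_w) \cdot \nabla)\Theta$ is controlled using the weighted estimate above together with $\|\nabla \Theta\|_{L^\infty}$. This rules out finite-time blow-up in $L^2$. Finally, the regularities $w \in L^2_{loc}((t_0,\infty); H^2)$ and $(\ell_w,\omega_w) \in C^1([t_0,\infty); \mathbb R^3)$ follow from a classical parabolic bootstrap argument applied to the Duhamel representation, using again the smoothing properties of $S(t)$ and the explicit form of $F_\alpha$; the $C^1$ regularity of the rigid motion at $t = t_0$ in particular reflects the fact that $(\ell_w,\omega_w)$ is a finite-dimensional component of $W$ satisfying an ODE whose right-hand side inherits its continuity from the Duhamel integral.
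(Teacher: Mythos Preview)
Your overall scheme --- Kato-type fixed point in a space with built-in $(t-t_0)^{1/2}$ smoothing for local existence, then an energy identity plus a blow-up alternative for global existence --- is exactly the route the paper takes (Theorem~\ref{th:cauchy}, item~i), followed by the logarithmic energy lemma). The energy part of your argument is essentially correct: since $\Theta\cdot n=0$ on $\partial B_0$, the term $(\Theta\cdot\nabla)w$ indeed drops out of the energy balance, and the remaining $\Theta$-coupling is controlled by $\|\nabla\Theta\|_{L^\infty}$ and $\|\nabla\Theta\|_{L^2}$, which suffices via Gronwall to prevent finite-time blow-up.

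There is, however, a genuine gap in the local step. The Hardy-type inequality you propose,
\[
\bigl\| w/\langle x\rangle \bigr\|_{L^2(\mathcal F_0)} \lesssim \|\nabla w\|_{L^2(\mathcal F_0)} + |\ell| + |\omega|,
\]
is \emph{false} in two dimensions. Take $w(x)=\chi(|x|/R)\,e_1$ with a smooth cutoff $\chi$ equal to $1$ on $[0,1]$ and $0$ on $[2,\infty)$: then $\|\nabla w\|_{L^2}\sim 1$, the trace gives $\ell=e_1$, $\omega=0$, but $\|w/\langle x\rangle\|_{L^2}^2\sim\log R\to\infty$. This is precisely the $2$D obstruction (failure of $\dot H^1\hookrightarrow L^p$, Stokes paradox) that the paper flags repeatedly, and it means you cannot use this route to extract the smallness factor in the contraction estimate for the $\Theta$-coupling.

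The paper avoids Hardy entirely. For the term $(\Theta\cdot\nabla)w$ it writes
\[
S(\tau)\mathbb P\bigl[\mathds 1_{\mathcal F_0}(\Theta\cdot\nabla)w\bigr]
=A^{1/2}S(\tau)\,A^{-1/2}\mathbb P\,\divg\bigl(\mathds 1_{\mathcal F_0}\,\Theta\otimes w\bigr),
\]
using that $\Theta\cdot n=0$ on $\partial B_0$ (Corollary~\ref{cor_zecor}), and then bounds $\|A^{-1/2}\mathbb P\,\divg(\Theta\otimes w)\|_{\mathcal L^2}\le\|\Theta\|_{L^\infty}\|w\|_{L^2}\lesssim (1+s)^{-1/2}\|W\|_X$; the time integral against $(t-s)^{-1/2}$ then produces the smallness factor $\min(1,T^{1/2})$. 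For $(w-\ell_w)\cdot\nabla\Theta$ the paper works in $L^{4/3}$ instead of $L^2$, using Gagliardo--Nirenberg for $\|w\|_{L^4}$ and the decay of $\|\nabla\Theta\|_{L^2}$ and $\|\nabla\Theta\|_{L^{4/3}}$, together with the $\mathcal L^{4/3}\to\mathcal L^2$ smoothing of $S$. If you replace your Hardy step by either of these devices, the rest of your argument goes through as in the paper.
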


By reconstructing $(v,\ell,\omega)$ via \eqref{eq:ansatz}, we recover a global-in-time solution for unbounded-energy initial data of the form \eqref{eq_initdatw0}. We can then look at the large-time behavior of these solutions. To state this second result we shall start from a sufficiently developed Lamb-Oseen vortex, meaning that the radius of the vortex is sufficiently large, or that we consider the problem \eqref{eq:NS_mod1}-\eqref{eq:NS_mod7} starting  from a time $t_0$ sufficiently large with an initial data obtained by perturbing $\alpha \Theta(t_0,\cdot)$ like in \eqref{eq_initdatw0} with a small perturbation in $\mathcal{L}^2$.
We have then the following theorem:
\begin{theorem} \label{thm:main2}
{  Let $(\alpha,t_0) \in \mathbb R \times [0,\infty)$ and $(w_0, \ell^0_w,\omega_w^0) \in L^2(\mathcal F_0) \times \mathbb R^2 \times \mathbb R$ such that \eqref{eq_compat} is satisfied}.  Assume further that $t_0$ is sufficiently large, $\alpha$ is sufficiently small, $w_0 \in L^q(\mathcal F_0)$ for some $q \in (1,2)$ and {$(w_0, \ell^0_w,\omega_w^0)$ is small enough in $L^2(\mathcal F_0) \times \mathbb R^2 \times \mathbb R.$} The constructed solution $(v,\ell,\omega)$ to \eqref{eq:NS_mod1}--\eqref{eq:NS_mod7} with initial condition \eqref{eq_initdatw0} satisfies:
\begin{align} \label{eq_result_unb1}
&  \lim_{t \to \infty} t^{\frac 12 - \frac 1p}\|v(t) - \alpha \Theta(t,\cdot)\|_{L^p(\mathcal F_0)} = 0 \quad \forall \,p \in (2,\infty) \\
& \sup_{t >t_0} (t-t_0)^{\frac 1q} |\ell(t)| < \infty   \label{eq_result_unb2}
\end{align}
\end{theorem}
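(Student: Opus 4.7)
The strategy is to adapt the global decay argument behind Theorem \ref{thm_main3} to the perturbed system \eqref{eq:init_prob1}--\eqref{eq:init_prob7} for $w = v - \alpha \Theta$. By Theorem \ref{thm_main1} the solution $W=(w,\ell_w,\omega_w)$ is given by the Duhamel formula \eqref{eq:Duhamel}, with the source $F_\alpha$ naturally split into three groups: the standard fluid--structure quadratic term $-P[((w-\ell_w)\cdot\nabla)w]$, the $\alpha$-linear coupling with the vortex $-\alpha P[(\Theta\cdot\nabla)w+((w-\ell_w)\cdot\nabla)\Theta]$, and the explicit scalar source $\alpha\zeta$ loaded on the angular equation. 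I set up a fixed-point scheme in a norm of the form
\[
\|W\|_{\mathcal{Y}}=\sup_{t>t_0}\Bigl[(t-t_0)^{\frac1q-\frac1p}\|w(t)\|_{L^p(\mathcal F_0)}+(t-t_0)^{\frac1q}|\ell_w(t)|\Bigr]+\text{(auxiliary norms)}
\]
for a well-chosen $p\in(2,\infty)$, plus a time-integrability control on $\ell_w$ in $L^2((t_0,\infty))$ of the same flavour as the one required in the proof of Theorem \ref{thm_main3}.

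First I would estimate the free evolution $S(t-t_0)W_0$ in $\mathcal Y$ using the $L^q\to L^p$ decay estimates of the fluid--structure semigroup established earlier in the paper; this is where the hypothesis $w_0\in L^q\cap L^2$ with $q<2$ enters, producing the announced rate $(t-t_0)^{-(1/q-1/p)}$ on the $L^p$ component and $(t-t_0)^{-1/q}$ on the $\ell_w$ component. Next I would bound the Duhamel integral source by source. The quadratic piece is controlled exactly as in Theorem \ref{thm_main3}, the key input being the $L^2_t$ bound on $\ell_w$ needed to absorb the $\ell_w\cdot\nabla w$ term. The scalar source $\alpha\zeta$, whose explicit form comes out of the computation of Section \ref{sec:Cauchy}, decays sufficiently fast so that applying the semigroup bounds produces a harmless $O(\alpha)$ contribution to $\|W\|_{\mathcal Y}$.

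The main obstacle is the $\alpha$-linear coupling with $\Theta$. Since $\Theta(t,\cdot)\sim x^\perp/(2\pi|x|^2)$ at infinity it does not belong to any $L^p$ with $p\leq 2$, so one cannot treat $(\Theta\cdot\nabla)w$ as a standard $L^2$-perturbation. Instead I would use that $\Theta(t,\cdot)\in L^p$ for every $p>2$ with a decay $\|\Theta(t,\cdot)\|_{L^p}\lesssim(1+t)^{-(1/2-1/p)}$, while $\nabla\Theta$ enjoys improved spatial integrability, so that by a careful Hölder/Sobolev splitting the Duhamel contribution of this linear piece is bounded by $C\alpha$ times a combination of $\|W\|_{\mathcal Y}$ and $\|w\|_{L^2_tH^1_x}$, with an additional gain coming from large $t_0$. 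Smallness of $\alpha$ and largeness of $t_0$ are then used precisely to turn this linear term into a contraction on $\mathcal Y$.

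Once the fixed-point problem closes in $\mathcal Y$ under the stated smallness assumptions, the estimate \eqref{eq_result_unb2} is exactly the $\ell_w$ part of the norm, noting that $\ell=\ell_w$ in the ansatz. The limit \eqref{eq_result_unb1} follows from the strict inequality $q<2$: the $L^p$ part of the fixed-point norm gives $\|w(t)\|_{L^p}\lesssim(t-t_0)^{-(1/q-1/p)}$, so that
\[
(t-t_0)^{\frac12-\frac1p}\|w(t)\|_{L^p(\mathcal F_0)}\lesssim (t-t_0)^{-(\frac1q-\frac12)}\xrightarrow[t\to\infty]{}0,
\]
which is precisely \eqref{eq_result_unb1} once one rewrites $v-\alpha\Theta=w$ and absorbs the shift between $t$ and $t-t_0$ in the prefactor.
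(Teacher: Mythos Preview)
Your route differs from the paper's in two substantive ways. First, the paper does not run a fixed point directly in a norm carrying the rate $1/q$. Instead (Theorem~\ref{th:cauchy}, item ii)) it first closes a Kato-type contraction in the basic norm
\[
\|W\|_X=\sup_{t\geq t_0}\|W(t)\|_{\mathcal L^2}+\sup_{t>t_0}(t-t_0)^{1/2}\bigl(\|\nabla w(t)\|_{L^2(\mathcal F_0)}+|\ell_{W(t)}|\bigr),
\]
obtaining global existence together with smallness of $\|W\|_X$; then, \emph{a posteriori}, assuming the free evolution satisfies \eqref{eq:assump_mu} for some $\mu\in(0,\tfrac12)$, it bootstraps the weighted quantity $\mathcal E_T=\sup(t-t_0)^\mu\|W\|_{\mathcal L^2}+\sup(t-t_0)^{\mu+1/2}(\|\nabla w\|_{L^2}+|\ell_W|)$ via an inequality of the form $\mathcal E_T\leq M+\tilde K(|\alpha|+\|W\|_X)\,\mathcal E_T+\tilde K|\alpha|$, which closes uniformly in $T$ thanks to the already-established smallness of $|\alpha|+\|W\|_X$. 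The hypothesis $w_0\in L^q$ enters only at the end, to verify \eqref{eq:assump_mu} with $\mu=1/q-1/2$ via Lemma~\ref{lem:est_semigroup}; \eqref{eq_result_unb1} then follows by Gagliardo--Nirenberg interpolation between the resulting $\mathcal L^2$ and $\dot H^1$ decay rates.

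Second, your treatment of the quadratic term by invoking ``exactly as in Theorem~\ref{thm_main3}'' and an $L^2_t$ bound on $\ell_w$ is misplaced: that machinery (Section~\ref{sec:bounded}, fractional powers of $A$, Proposition~\ref{prop:Lq}) is designed for \emph{large} data and is not used in the proof of Theorem~\ref{thm:main2}. Here the quadratic term is handled directly by writing $(w-\ell_w)\cdot\nabla w=\operatorname{div}[(w-\ell_w)\otimes w]$ and applying the duality semigroup bounds (Lemma~\ref{lem:bilinear}); no separate $L^2_t(\ell_w)$ control is needed because $|\ell_W|$ is already carried in the $X$-norm with weight $(t-t_0)^{1/2}$. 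Your one-shot scheme might be made to work, but you would have to re-derive the $L^2_t(\ell_w)$ bound for the \emph{perturbed} system with the $\alpha\Theta$ coupling present, which is not addressed in your plan and is not free.
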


Some comments are in order. First, the decay rate prescribed in \eqref{eq_result_unb1} implies that $\alpha \Theta$ is indeed the leading term for large times. However, the explicit formula \eqref{eq:Oseen_vort} entails that we have  $|\Theta(t,x)| \leq 1/t$ on $\partial B_0$ so that the remainder may be much larger { on $\partial B_0$} and induce a leading translation velocity. The complementary inequality \eqref{eq_result_unb2} fixes then a minimal decay of the translation velocity depending only on the integrability of the initial perturbation. 

\medskip

The proofs of the two latter theorems rely on the $L^p-L^q$ properties of the semi-group $(S(t))_{t\geq 0}$  obtained in \cite{Ervedoza_Hillairet__disk_fluid}. 
One key-difficulty in both cases is again the term $\ell_w \cdot \nabla w.$ This term has limited space integrability (we cannot expect better than $\nabla w \in L^2(\mathcal F_0)$) and time-decay ($|\ell_w|$ decays a little less than $\|\nabla w\|_{L^2(\mathcal F_0)}$ but strictly less {\em a priori}). Hence, to handle this term we have to estimate sharply the loss of time-decay  between $|\ell_w|$ and $\|\nabla w\|_{L^2(\mathcal F_0)}.$ This is obtained by applying a sharp version of the Galgliardo-Nirenberg inequality and of the associated constant, following \cite{DelPino_Dolbeaut__GN_const}.

\medskip

The outline of the paper is as follows. In the next section we provide preliminary lemmas. We explain the construction of the capital-letter unknowns and fluid-structure operator $A.$ We recall the results of  \cite{Ervedoza_Hillairet__disk_fluid} on the decay properties of the semi-group and complement the analysis with a descrpition of the fractional powers of $A$ in the spirit of \cite{Gallay_Maekawa__NS_obstacle}. Finally, we recall the Gagliardo Nirenberg analysis underlying the stability analysis of the Lamb-Oseen vortex. In Section \ref{sec:Cauchy} we detail the proofs of Theorem \ref{thm_main1} and Theorem \ref{thm:main2}. Section \ref{sec:bounded} is devoted to the proof of Theorem \ref{thm_main3}. Some further technicalities are presented in an appendix.

%\newpage

\section{Preliminary constructions and technical lemmas}
In this section, we first recall the construction of function spaces that enable to handle  the fluid unknown $v$ and solid unknowns $(\ell,\omega)$ at once.
We also recall the construction of the unbounded operator $A$ underlying the resolution of \eqref{eq:Stokes_modfirst}-\eqref{eq:Stokes_modlast}. 
These constructions are reproduced from  \cite{Ervedoza_Hillairet__disk_fluid,tucsnaktakahashi2004,WangXin2011}. 

\medskip

The first key-issue we address  is related to the problem of controlling the body linear velocity by the fluid velocity-field. In the forthcoming analysis, one would hope to be able to control the linear velocity $|\ell|$  by  $\|\nabla v\|_{L^2(\mathcal F_0)}$ only. However, in full generality,   this is possible in $3D$ but it turns out to be false in $2D.$ This can be seen as reminiscent either of the fact that $\dot{H}^1(\mathbb R^2)$ embeds in no $L^p(\mathbb R^2)$ space or of the Stokes paradox \cite[Introduction of Section V]{Galdibooknew}.
 Here, we exchange such a control for an almost optimal control in the form of a family of Gagliardo-Nirenberg inequalities with an explicit estimate of the embedding constants.
The second key-contribution of this section is the analysis of the 
fractional powers of the operator $A.$

\subsection{Function spaces and Gagliardo-Nirenberg inequality}
As classical in fluid+disk systems, we treat  \eqref{eq:NS_mod1}-\eqref{eq:NS_mod7} by encoding all the unknowns $(v,\ell_,\omega)$ into one unified unknown with the following construction. 
From a triplet $(v, \ell, \omega) \in [\mathcal C^{\infty}_c(\overline{\mathcal F_0})]\times \mathbb R^2 \times \mathbb R$ verifying 
\begin{equation*}
\divg v = 0 \quad \textnormal{in } \mathcal{F}_0, \qquad v = \ell + \omega \, x^\perp \quad \textnormal{ on } \partial B_0,
\end{equation*}
we define a divergence-free vector field denoted $V$ on $\mathbb{R}^2$ obtained by extending $v$ by $\ell + \omega \, x^\perp$ in $B_0$. Adapted to such $V$, we introduce the function spaces $\mathcal{L}^p$ ($p\in[1,\infty]$) defined by
\begin{equation*}
    \mathcal{L}^p \coloneqq \{ V \in [L^p (\mathbb{R}^2)]^2, \, \divg V = 0 \textnormal{ in } \mathbb{R}^2, \, D(V) = 0 \textnormal{ in } B_0 \}.
\end{equation*}
We recall that, since $B_0$ is connected,  the condition $D(V)=0$ on $B_0$ implies that $V_{|_{B_0}}$ is a rigid velocity-field.
Conversely, we adapt below the convention that for $V \in \mathcal L^{p}$ we denote $v = \mathds{1}_{\mathcal F_0}V$ and $(\ell_{v},\omega_{v}) \in \mathbb R^2 \times \mathbb R$ the translation/angular velocities characterizing $V$ in $B_0.$

\medskip

We recall now some classical properties of these spaces. When $p \in [1, \infty)$, we endow $\mathcal L^p$ with the norm
\begin{equation*}
    \norm{V}_{\mathcal{L}^p}^p = \int_{\mathcal{F}_0} \abs{V}^p + \frac{m}{\pi} \int_{B_0} \abs{V}^p,
\end{equation*}
(and the corresponding definition when $p = \infty$).When $(p,p') \in [1,\infty]$ are conjugate, we equip $(\mathcal L^p,\mathcal L^{p'})$ with the duality pairing:
\begin{equation*}
    \langle V, W \rangle_{\mathcal{L}^p, \mathcal{L}^{p'}} = \int_{\mathcal{F}_0} V \cdot W + \frac{m}{\pi} \int_{B_0} V \cdot W.
\end{equation*}
For any $p\in [1,\infty],$ it is straightforward that $\mathcal L^{p}$ is a closed subspace of 
$$
L^{p}_{\sigma}(\mathbb R^2) := \{ V \in L^p(\mathbb R^2) \text{ s.t. } {\rm div } V = 0 \} 
$$
which is itself a closed subspace of $[L^p(\mathbb R^2)]^2.$ In particular, there exists a projector $\mathbb P_p: [L^p(\mathbb R^2)]^2 \to \mathcal L^p.$ When $p \in (1,\infty),$ this projector is analyzed in previous references such as \cite{WangXin2011}. Since all the $\mathbb P_{p}$ coincide on $C^{\infty}_c(\mathbb R^2)$ we can drop the $p$-dependency and denote this projector with $\mathbb P.$ Our analysis below relies on the following fundamental lemma whose proof can be found in \cite[Remark 2.4]{WangXin2011}:
\begin{lem}
Given $p \in (1,\infty)$ the projector $\mathbb P : [L^p(\mathbb R^2)]^2 \to \mathcal L^p$ is bounded. 
\end{lem}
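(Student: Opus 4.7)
My plan is to realize $\mathbb{P}$ as a composition of two bounded operators: first the classical Leray projection on the whole plane, which enforces divergence-freeness, then a correction that enforces rigidity inside $B_0$ while preserving divergence-freeness. The Leray projector $\mathbb{P}^{\mathrm{L}} = I - \nabla \Delta^{-1} \divg$ is bounded on $[L^p(\mathbb R^2)]^2$ for every $p \in (1,\infty)$ since its components are matrices of Riesz transforms, hence classical Calder\'on--Zygmund operators.

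For the correction step I would construct $Q\colon L^p_\sigma(\mathbb R^2) \to \mathcal{L}^p$ explicitly. Given $W \in L^p_\sigma(\mathbb R^2)$, I take the rigid data to be the $L^2$-orthogonal projection of $W|_{B_0}$ onto rigid velocity fields,
\[
\ell(W) := \frac{1}{\pi} \int_{B_0} W \, dx, \qquad \omega(W) := \frac{2}{\pi} \int_{B_0} W \cdot x^\perp \, dx,
\]
which are bounded linear functionals on $L^p(B_0)^2$. Then I set $Q W := \ell(W) + \omega(W) x^\perp$ on $B_0$ and $Q W := W - \nabla \phi$ on $\mathcal F_0$, where $\phi$ solves the exterior Neumann problem
\[
\Delta \phi = 0 \text{ in } \mathcal F_0, \qquad \partial_n \phi = \bigl(W - \ell(W) - \omega(W) x^\perp\bigr) \cdot n \text{ on } \partial B_0, \qquad \nabla \phi(x) \to 0 \text{ as } |x| \to \infty.
\]
The solvability condition $\int_{\partial B_0} \partial_n \phi \, d\sigma = 0$ holds automatically: $\int_{\partial B_0} W \cdot n = 0$ since $W$ is divergence-free across $\partial B_0$, while the contributions of $\ell \cdot n$ and $\omega x^\perp \cdot n$ integrate to zero by symmetry (since $n = x$ on the unit circle). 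By construction the normal traces match across $\partial B_0$, so $Q W$ is globally divergence-free on $\mathbb R^2$ and rigid on $B_0$, hence lies in $\mathcal L^p$. A direct computation using $\int_{B_0} x^\perp\, dx = 0$ and $\int_{B_0} |x^\perp|^2\, dx = \pi/2$ shows that $(\ell(V),\omega(V))$ recovers the rigid structure of $V|_{B_0}$ when $V \in \mathcal L^p$, so the boundary data vanishes, $\phi \equiv 0$, and $\mathbb P := Q \circ \mathbb{P}^{\mathrm L}$ is indeed a projector onto $\mathcal L^p$.

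For boundedness, H\"older gives $|\ell(W)|+|\omega(W)| \leq C \|W\|_{L^p(B_0)}$, which controls the rigid part of $QW$. The main analytical point is the $L^p$ estimate $\|\nabla \phi\|_{L^p(\mathcal F_0)} \leq C \|W\|_{L^p(\mathbb R^2)}$: the normal trace of the divergence-free field $W$ lies in $W^{-1/p,p}(\partial B_0)$ with the appropriate bound, and the $L^p$ theory for the exterior Neumann problem on the smooth domain $\mathcal F_0$ closes the estimate. The main technical obstacle is precisely this exterior estimate in dimension two, where one must work in homogeneous or weighted Sobolev spaces to handle the behaviour at infinity and to rule out the logarithmic harmonic profile $\ln|x|$; the zero-flux compatibility proved above is exactly what makes the latter possible. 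An alternative, more abstract path would be to verify that $\mathbb P$ coincides with the orthogonal projector when $p=2$ and is self-adjoint with respect to the weighted duality pairing $\langle \cdot, \cdot \rangle_{\mathcal L^p, \mathcal L^{p'}}$, and then conclude by duality combined with Riesz--Thorin interpolation.
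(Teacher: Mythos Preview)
The paper does not actually prove this lemma: it simply cites \cite[Remark~2.4]{WangXin2011}. So there is no ``paper's proof'' to compare against, and your attempt is genuinely additional work.

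Your construction yields \emph{a} bounded projector from $[L^p(\mathbb R^2)]^2$ onto $\mathcal L^p$, but it does not yield \emph{the} projector $\mathbb P$ that the paper uses. The paper's $\mathbb P$ is the extension to $L^p$ of the $\mathcal L^2$-orthogonal projection with respect to the \emph{weighted} inner product $\langle V,W\rangle = \int_{\mathcal F_0} V\cdot W + \tfrac{m}{\pi}\int_{B_0} V\cdot W$; this specific choice is essential later so that $A=\mathbb P\mathcal A$ is self-adjoint on $\mathcal L^2$. A short computation shows your $Q\circ\mathbb P^{\mathrm L}$ does not coincide with this operator: for $W\in L^2_\sigma$ the discrepancy $E=W-QW$ equals $\nabla\phi$ on $\mathcal F_0$ and is orthogonal to rigid fields on $B_0$, but membership in $(\mathcal L^2)^\perp$ forces the additional relation $\tfrac{m}{\pi}\int_{B_0}E = \int_{\partial B_0}\phi\,x\,d\sigma$. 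Since your construction gives $\int_{B_0}E=0$, you would need $\int_{\partial B_0}\phi\,x\,d\sigma=0$, which has no reason to hold for generic Neumann data. In short, your projector and the paper's differ unless the boundary potential has vanishing first Fourier modes, which is not automatic.

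Your closing remark about checking self-adjointness with respect to the weighted pairing and then interpolating is closer to what is actually needed, and is essentially the route taken in \cite{WangXin2011}. If you want to repair the explicit construction, you must couple the choice of the rigid part $(\ell,\omega)$ to the exterior pressure through the weight $m/\pi$ (and similarly $\mathcal J$ for the angular part), rather than fixing $(\ell,\omega)$ first from $W|_{B_0}$ alone. The exterior Neumann estimate in two dimensions that you flag as the main obstacle remains a genuine technical point either way.
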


\medskip
We also define
\begin{equation*}
    \mathcal{H}^1 \coloneqq \mathcal{L}^2 \cap { [H^1 (\mathbb{R}^2)]^2}.
\end{equation*}
As a closed subspace of $[H^1(\mathbb R^2)]^2$ this is a separable Hilbert space when equipped  with the norm
\begin{equation*}
    \norm{V}_{\mathcal{H}^1} = \norm{V}_{\mathcal{L}^2} + \norm{\nabla V}_{L^2},
\end{equation*}
in which the set of $C^{\infty}_c(\mathbb R^2)$-soleonidal vector-field  is dense. Implicitly in the gradient norm, we use
the shortcut $L^2$ for $L^2(\mathbb R^2).$ We keep this convention for norms of Lebesgue and Sobolev spaces in what follows.
The $\mathcal H^1$-norm is associated with a Korn inequality that reads as follows:
\begin{lem} \label{lem:grad_sym}
For any $V \in \mathcal H^1$ there holds:
\begin{equation} \label{eq:grad_sym}
\int_{\mathbb R^2} |\nabla V|^2 = 2 \int_{\mathcal F_0} |D(V)|^2.
\end{equation}
\end{lem}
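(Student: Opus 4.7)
My plan is to establish the identity via the standard pointwise algebraic decomposition of $|D(V)|^2$ followed by an integration-by-parts identity which, crucially, vanishes under the divergence-free constraint. The key observation is that the condition $D(V)=0$ in $B_0$ in the definition of $\mathcal{H}^1$ does \emph{all} the geometry of the problem here: it simply restricts the integration domain for the symmetric gradient from $\mathbb{R}^2$ to $\mathcal{F}_0$. The substance of Lemma~\ref{lem:grad_sym} is then the usual Korn-type identity on the whole plane for an $H^1$ solenoidal vector field.

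Concretely, I would first record the pointwise algebraic identity valid for any $V\in H^1(\mathbb{R}^2)^2$:
\begin{equation*}
 2|D(V)|^2 \;=\; |\nabla V|^2 \;+\; \sum_{i,j=1}^{2}\partial_i V_j\,\partial_j V_i,
\end{equation*}
obtained by expanding $D(V)_{ij}=\tfrac12(\partial_iV_j+\partial_jV_i)$ and summing. Since $V\in\mathcal{H}^1\subset H^1(\mathbb{R}^2)^2$, every term belongs to $L^1(\mathbb{R}^2)$ and one may integrate over $\mathbb{R}^2$. Because $D(V)=0$ almost everywhere in $B_0$, the left-hand side integrates to $2\int_{\mathcal F_0}|D(V)|^2$. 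What remains is to show that the cross-term vanishes:
\begin{equation*}
 \int_{\mathbb R^2}\sum_{i,j=1}^{2}\partial_iV_j\,\partial_jV_i\,\mathrm{d}x \;=\;0.
\end{equation*}

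To establish this identity, I would first verify it on $C^\infty_c(\mathbb{R}^2)^2$: for such a test field $W$, integrating by parts in $\partial_i$ and then in $\partial_j$ yields
\begin{equation*}
 \int_{\mathbb R^2}\sum_{i,j}\partial_iW_j\,\partial_jW_i \;=\; -\int_{\mathbb R^2}\sum_j W_j\,\partial_j(\mathrm{div}\,W)\;=\;\int_{\mathbb R^2}(\mathrm{div}\,W)^2.
\end{equation*}
Both sides are continuous quadratic forms on $H^1(\mathbb{R}^2)^2$, so by density of $C^\infty_c(\mathbb{R}^2)^2$ in $H^1(\mathbb{R}^2)^2$ (no solenoidal constraint needed for this density), the identity $\int \sum_{i,j}\partial_iV_j\,\partial_jV_i = \int (\mathrm{div}\,V)^2$ extends to every $V\in H^1(\mathbb{R}^2)^2$. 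Since $V\in\mathcal{H}^1$ is divergence-free on $\mathbb{R}^2$ and $\mathrm{div}\,V\in L^2$ (as a sum of $L^2$ partial derivatives), the right-hand side is $0$.

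Combining these steps gives $\int_{\mathbb R^2}|\nabla V|^2 = 2\int_{\mathcal F_0}|D(V)|^2$. I do not anticipate any serious obstacle: the only delicate point is justifying the integration by parts for $H^1$ vector fields on the whole space, and this is immediate from the fact that the bilinear form $(V,W)\mapsto \int\sum_{i,j}\partial_iV_j\,\partial_jW_i$ is continuous on $H^1(\mathbb R^2)^2$ and agrees with $\int \mathrm{div}\,V\,\mathrm{div}\,W$ on the dense subspace $C^\infty_c(\mathbb R^2)^2$. No use of the body $B_0$ (beyond the trivial restriction from $D(V)=0$ there), no boundary trace on $\partial B_0$, and no appeal to the finer density result for $\mathcal{H}^1$ by compactly supported solenoidal fields is required.
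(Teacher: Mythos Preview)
Your proof is correct. The paper itself does not give a proof of this lemma but simply refers to \cite[Lemma~4.1]{tucsnaktakahashi2004}; your argument is the standard self-contained derivation (pointwise identity $2|D(V)|^2=|\nabla V|^2+\sum_{i,j}\partial_iV_j\,\partial_jV_i$, then $\int\sum_{i,j}\partial_iV_j\,\partial_jV_i=\int(\operatorname{div}V)^2=0$ by density, then $D(V)=0$ on $B_0$ to pass from $\mathbb R^2$ to $\mathcal F_0$) and is presumably what the cited reference contains.
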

We refer to \cite[Lemma 4.1]{tucsnaktakahashi2004} for a proof.

\medskip

We complement this part of the section with a Gagliardo-Nirenberg inequality that will enable to control the linear velocity associated with a fluid velocity-field. 
{
We build on the following result of \cite{DelPino_Dolbeaut__GN_const}:  
    \begin{lem}[{\cite[Theorem~1.1]{DelPino_Dolbeaut__GN_const}}]
        Let $d \geq 2$ and $q \geq 1$ such that $q \leq \frac{d}{d-2}$ if $d \geq 3$. Define
        \begin{equation*}
            \mathcal{D}^q (\mathbb{R}^d) = \{ u \in L^{q+1} (\mathbb{R}^d) \cap L^{2q} (\mathbb{R}^d) \, | \, \nabla u \in L^2 (\mathbb{R}^d) \}
        \end{equation*}
        Then, for any function $u \in \mathcal{D}^q (\mathbb{R}^d)$, there holds
        \begin{equation*}
            \norm{u}_{L^{2q}} \leq A_{q,d} \norm{\nabla u}_{L^2}^\theta \norm{u}_{L^{q+1}}^{1-\theta},
        \end{equation*}
        where
        \begin{equation*}
            A_{q,d} \coloneqq \biggl( \frac{y (q-1)^2}{2 \pi d} \biggr)^{\frac{\theta}{2}} \biggl( \frac{2 y - d}{2y} \biggr)^\frac{1}{2q} \biggl( \frac{\Gamma (y)}{\Gamma (y - \frac{d}{2})} \biggr)^\frac{\theta}{d},
        \end{equation*}
        with
        \begin{equation*}
            \theta = \frac{d (q-1)}{q (d + 2 - (d-2) q)}, \qquad
            y = \frac{q+1}{q-1}.
        \end{equation*}
    \end{lem}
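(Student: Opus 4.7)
The plan is to follow the classical Del Pino--Dolbeault strategy: reduce to a radial one-dimensional variational problem, identify the extremal family explicitly, and compute the sharp constant via Beta and Gamma function identities.

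First, I would exploit the two scaling invariances $u \mapsto \lambda\, u(\mu\, \cdot)$ of the inequality to normalize two of the three norms. Proving the inequality then reduces to showing that the scale-invariant quotient
\[
\mathcal{J}[u] \;=\; \frac{\|\nabla u\|_{L^2}^{\theta}\,\|u\|_{L^{q+1}}^{1-\theta}}{\|u\|_{L^{2q}}}
\]
admits a positive infimum on $\mathcal{D}^q(\mathbb{R}^d)$ equal to $A_{q,d}^{-1}$. Schwarz symmetrization together with the P\'olya--Szeg\H{o} inequality reduces the search for minimizers to nonnegative radial nonincreasing functions, and in the subcritical range a concentration--compactness argument produces an extremal: rearrangement excludes vanishing minimizing sequences, while the two remaining scaling degrees of freedom absorb dichotomy.

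Next, I would write down the Euler--Lagrange equation, which for a minimizer takes the form
\[
-\Delta u \;=\; a\, u^{q} + b\, u^{2q-1} \qquad \text{in } \mathbb{R}^d
\]
for suitable Lagrange multipliers $a,b$. The pivotal observation is that this semilinear elliptic equation admits the explicit Barenblatt-type profile $u_\star(x) = (1 + |x|^2)^{-1/(q-1)}$ as a radial solution for an appropriate choice of $a,b$, as one verifies by a direct computation. Once $u_\star$ is identified as the extremal candidate, the three quantities $\|\nabla u_\star\|_{L^2}^2$, $\|u_\star\|_{L^{q+1}}^{q+1}$, $\|u_\star\|_{L^{2q}}^{2q}$ reduce to one-dimensional integrals of the form $\int_0^\infty r^{d-1}(1+r^2)^{-\beta}\,\mathrm{d}r$, which evaluate to Beta functions; rearranging them into the ratio $\Gamma(y)/\Gamma(y-d/2)$ with $y = (q+1)/(q-1)$ produces the explicit constant $A_{q,d}$.

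The main obstacle is the uniqueness of the radial extremal up to translations, dilations and multiplicative rescalings: the variational argument alone yields \emph{some} minimizer, and one must show it coincides with $u_\star$ up to these symmetries. Two routes are available. A direct approach reduces the radial Euler--Lagrange ODE to a planar autonomous system via an Emden--Fowler substitution and classifies heteroclinic orbits by phase-plane analysis, using a Pohozaev-type identity to exclude spurious profiles. A more conceptual route, which I would prefer, bypasses symmetrization entirely and invokes the Cordero-Erausquin--Nazaret--Villani mass-transport argument: one transports the probability density proportional to $u^{2q}$ onto its $u_\star$ counterpart via a Brenier convex potential, applies the Monge--Amp\`ere change of variables, the arithmetic--geometric inequality on the eigenvalues of the transport Hessian, and an integration by parts against $\nabla u$. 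This yields the sharp inequality in one stroke and simultaneously characterizes the equality case, at the cost of imposing enough regularity on $u$ to justify the transport step.
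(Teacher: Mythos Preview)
The paper does not prove this lemma: it is quoted verbatim as \cite[Theorem~1.1]{DelPino_Dolbeaut__GN_const} and used as a black box in the subsequent proof of Lemma~\ref{lem:GN_const}, where only the asymptotic size of the constant $A_{q,2}$ as $q\to\infty$ is extracted. There is therefore no in-paper argument to compare your proposal against.

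That said, your outline is a faithful sketch of the Del Pino--Dolbeault strategy: scaling normalization, Schwarz symmetrization and P\'olya--Szeg\H{o} to restrict to radial nonincreasing competitors, concentration--compactness for existence of an extremal, identification of the Barenblatt profile $(1+|x|^2)^{-1/(q-1)}$ through the Euler--Lagrange equation, and evaluation of the sharp constant via Beta/Gamma integrals. The alternative route via the Cordero-Erausquin--Nazaret--Villani mass-transport argument that you mention is also a valid and well-known path to both the inequality and the equality case. One minor caveat: in your Euler--Lagrange equation the two power terms should carry opposite signs (one from the numerator of the quotient, one from the constraint), not the same sign as written; the Barenblatt profile solves $-\Delta u = c_1 u^{2q-1} - c_2 u^{q}$ with $c_1,c_2>0$, as a direct computation confirms.
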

And we obtain the following lemma:   
}
    \begin{lem} \label{lem:GN_const}
        There exists $C > 0$ such that, for any $p \geq 2$ and any $u \in H^1 (\mathbb{R}^2)$, there holds
        \begin{equation*}
            \norm{u}_{L^p} \leq C \sqrt{p} \, \norm{u}_{L^2}^{\frac{2}{p}} \norm{\nabla u}_{L^2}^{1-\frac{2}{p}}.
        \end{equation*}
    \end{lem}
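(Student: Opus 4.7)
The plan is to deduce the stated inequality from the Del Pino--Dolbeault lemma just quoted, applied in dimension $d=2$ with $q=p/2$, together with a classical interpolation, and then to read off the growth of the resulting constant.

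First I would apply the preceding lemma with $d=2$ and $q=p/2$, which (since $q\le d/(d-2)$ is vacuous when $d=2$) holds for every $p\ge 2$. Writing out the parameters we get $y=(q+1)/(q-1)$, $\theta=(q-1)/(2q)=1/2-1/p$, and, using $\Gamma(y)/\Gamma(y-1)=y-1=2/(q-1)$, the constant simplifies to
\begin{equation*}
A_{q,2}=\left(\frac{q+1}{2\pi}\right)^{(q-1)/(4q)}\left(\frac{2}{q+1}\right)^{1/(2q)}.
\end{equation*}
This produces the intermediate bound
\begin{equation*}
\|u\|_{L^{p}}\le A_{q,2}\,\|\nabla u\|_{L^{2}}^{1/2-1/p}\,\|u\|_{L^{p/2+1}}^{1/2+1/p}.
\end{equation*}
Since $p/2+1$ lies between $2$ and $p$, the factor $\|u\|_{L^{p/2+1}}$ can be interpolated by Hölder: a direct computation gives
\begin{equation*}
\|u\|_{L^{p/2+1}}\le \|u\|_{L^{2}}^{2/(p+2)}\|u\|_{L^{p}}^{p/(p+2)}.
\end{equation*}

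Substituting this into the previous display, the exponent of $\|u\|_{L^{p}}$ on the right becomes $(p/(p+2))(1/2+1/p)=1/2$, so it can be absorbed on the left after squaring. Tracking the remaining exponents gives the announced interpolation with $L^2$ and $\dot H^1$ norms, with prefactor $A_{q,2}^{2}$. Explicitly,
\begin{equation*}
A_{q,2}^{2}=\left(\frac{p+2}{4\pi}\right)^{1/2-1/p}\left(\frac{4}{p+2}\right)^{2/p}.
\end{equation*}
The main (and only) non-routine step is then to show that $A_{q,2}^{2}\le C\sqrt{p}$ uniformly in $p\ge 2$. As $p\to\infty$ the first factor is equivalent to $\sqrt{p/(4\pi)}$ while the second tends to $1$ (since $(\log p)/p\to 0$), so the ratio $A_{q,2}^{2}/\sqrt{p}$ is bounded at infinity; for $p$ in any compact subinterval of $[2,\infty)$ the expression is continuous and finite (at $p=2$ both factors equal $1$). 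These two observations combine to yield the uniform bound $A_{q,2}^{2}\le C\sqrt{p}$, concluding the proof. The only mildly delicate point is the asymptotic analysis of the constant, but this reduces to elementary manipulations of powers.
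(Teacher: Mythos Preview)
Your proof is correct and follows essentially the same route as the paper: apply the Del Pino--Dolbeault inequality with $d=2$, $q=p/2$, interpolate $\|u\|_{L^{p/2+1}}$ between $L^2$ and $L^p$, absorb the resulting $\|u\|_{L^p}^{1/2}$ on the left, and check that the squared constant $A_{q,2}^2$ grows like $\sqrt{p}$. Your simplification of $A_{q,2}$ into the closed form $\bigl(\tfrac{q+1}{2\pi}\bigr)^{(q-1)/(4q)}\bigl(\tfrac{2}{q+1}\bigr)^{1/(2q)}$ is slightly cleaner than the paper's intermediate expression, but the argument and the outcome are identical.
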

    
   \begin{proof}

     Let $u \in H^1 (\mathbb{R}^2)$. Applying the previous lemma with $d=2$ and $q = \frac{p}{2}$, we get
    \begin{equation} \label{eq:est_DPB}
        \norm{u}_{L^{p}} \leq A_{q,2} \norm{\nabla u}_{L^2}^\theta \norm{u}_{L^{q+1}}^{1-\theta},
    \end{equation}
    with 
    \begin{equation*}
        \theta = \frac{q-1}{2q} = \frac{1}{2} - \frac{1}{p}, \quad
        y = \frac{q + 1}{q - 1} = 1 + \frac{4}{p - 2},
    \end{equation*}
    and
    \begin{equation*}
        A_{q, 2} = \biggl( \frac{y (q-1)^2}{4 \pi} \biggr)^{\frac{\theta}{2}} \biggl( \frac{y - 1}{y} \biggr)^\frac{1}{p} \biggl( \frac{\Gamma (y)}{\Gamma (y - 1)} \biggr)^\frac{\theta}{2}.
    \end{equation*}
    Using the property of the Gamma function, we have $\Gamma (y) = (y-1) \, \Gamma (y-1)$, so that
    \begin{equation} \label{eq:est_A}
        A_{q,2} = \biggl( \frac{(p+2) (p-2)}{16 \pi} \biggr)^{\frac{1}{4} - \frac{1}{2p}} \biggl( \frac{4}{p+2} \biggr)^\frac{1}{p} \biggl( \frac{4}{p-2} \biggr)^{\frac{1}{4} - \frac{1}{2p}} \leq C \, p^\frac{1}{4}.
    \end{equation}
    Moreover, by interpolation, there holds
    \begin{equation*}
        \norm{u}_{L^{q+1}} \leq \norm{u}_{L^2}^\frac{2}{p+2} \norm{u}_{L^{p}}^{\frac{p}{p+2}}.
    \end{equation*}
    Thus, putting this and \eqref{eq:est_A} into \eqref{eq:est_DPB} yields
    \begin{equation*}
        \norm{u}_{L^{p}} \leq C \, p^\frac{1}{4} \, \norm{\nabla u}_{L^2}^{\frac 12 - \frac 1p}\norm{u}_{L^2}^{\frac{1}{p}} \norm{u}_{L^{p}}^{\frac{1}{2}}.
    \end{equation*}
    The conclusion follows.
\end{proof}
 
The above lemma entails the following control that we shall use without mention below:
\begin{cor} \label{cor:est_l_GN}
    Let $p \geq 2$ and $V \in \mathcal L^p \cap H^1(\mathbb R^2).$ There exists a constant $C$ independent of $p$
    and $V$ such that:
    \begin{equation*}
    |\ell_v| \leq C \sqrt{p} \|V\|_{\mathcal L^2}^{\frac 2p} \|\nabla V\|_{L^2(\mathbb R^2)}^{1 - \frac 2p}.
    \end{equation*}
\end{cor}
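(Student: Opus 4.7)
The plan is to relate $|\ell_v|$ to the standard $L^p$-norm of $V$ on the disk $B_0$ and then invoke the Gagliardo-Nirenberg estimate of Lemma~\ref{lem:GN_const} on each component of $V$ regarded as an $H^1(\mathbb R^2)$ function. The key structural fact is that $V \in \mathcal L^p$ implies $D(V)=0$ on the connected set $B_0$, so $V|_{B_0} = \ell_v + \omega_v\, x^{\perp}$ pointwise, with $\ell_v,\omega_v$ the translation/angular velocities associated with $V$.

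First, I would compute $\int_{B_0} V(x)\,\mathrm{d}x = \pi\,\ell_v$, using that $\int_{B_0} x^{\perp}\,\mathrm{d}x = 0$ by the symmetry of $B_0$. Then Hölder's inequality on $B_0$ yields
\begin{equation*}
\pi\,|\ell_v| = \left| \int_{B_0} V(x)\,\mathrm{d}x \right| \leq |B_0|^{1-1/p}\,\|V\|_{L^p(B_0)} = \pi^{1 - 1/p}\,\|V\|_{L^p(B_0)},
\end{equation*}
so that $|\ell_v| \leq \pi^{-1/p}\,\|V\|_{L^p(B_0)} \leq \pi^{-1/p}\,\|V\|_{L^p(\mathbb R^2)}$. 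Since $p \geq 2$, the factor $\pi^{-1/p}$ stays uniformly bounded.

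Next, applying Lemma~\ref{lem:GN_const} to each (scalar) component of $V \in H^1(\mathbb R^2)$ and summing, I obtain
\begin{equation*}
\|V\|_{L^p(\mathbb R^2)} \leq C\sqrt{p}\,\|V\|_{L^2(\mathbb R^2)}^{2/p}\,\|\nabla V\|_{L^2(\mathbb R^2)}^{1 - 2/p},
\end{equation*}
for a constant $C$ independent of $p$ and $V$. Finally, comparing the $L^2(\mathbb R^2)$ and $\mathcal L^2$ norms is straightforward:
\begin{equation*}
\|V\|_{L^2(\mathbb R^2)}^2 = \int_{\mathcal F_0}|V|^2 + \int_{B_0}|V|^2 \leq \max\!\Bigl(1,\tfrac{\pi}{m}\Bigr)\,\|V\|_{\mathcal L^2}^2,
\end{equation*}
by definition of the $\mathcal L^2$-norm. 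Chaining these three estimates produces the announced bound with a constant $C$ depending only on $m$ (hence independent of $p$ and $V$).

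There is no real obstacle here: the statement is essentially a bookkeeping corollary of Lemma~\ref{lem:GN_const} once one notices that $\ell_v$ is the average of $V$ over $B_0$, the rotational part dropping out by symmetry. The only mild care is that all constants remain uniform in $p \in [2,\infty)$, which is immediate since $\pi^{-1/p}$ and $\max(1,\pi/m)^{1/2}$ are bounded for $p \geq 2$.
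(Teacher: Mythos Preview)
Your proof is correct and is exactly the argument the paper has in mind: the corollary is stated immediately after Lemma~\ref{lem:GN_const} with the remark that the lemma ``entails the following control,'' and no further details are given. Your averaging argument on $B_0$ (using $\int_{B_0} x^\perp=0$), followed by H\"older and the componentwise application of Lemma~\ref{lem:GN_const}, is precisely how one makes this implication explicit, with all constants manifestly uniform in $p\geq 2$.
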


\subsection{Construction of the unbounded operator $A$ and related properties}
With the construction of the previous part in this section, we can now define the
fluid-structure operator $A$ which enables to rewrite the system  \eqref{eq:Stokes_modfirst}-\eqref{eq:Stokes_modlast} into the infinite-dimensional differential system \eqref{eq_HY}. Following \cite{Ervedoza_Hillairet__disk_fluid,tucsnaktakahashi2004,WangXin2011}
we set:
$$
\mathcal D(A) :=\{W \in \mathcal H^1(\mathbb R^2) \text{ s.t. } w = W_{|_{\mathcal F_0}} \in [H^2(\mathcal F_0)]^2\}.
$$
We point out that such vector-fields admit a discountinuity of normal derivative on $\partial B_0.$ 
This is a key property that enables a non-trivial solid dynamics. For any $W \in \mathcal D(A)$
we set $A W = \mathbb P\mathcal AW$ where (keeping the convention that  $w = W_{|_{\mathcal F_0}}$)
$$
\mathcal AW =
\{
\begin{aligned}
& -  \Delta  w & \text{ in $\mathcal F_0$}\\
& \dfrac{2}{m} \left( \int_{\partial B_0} D(w) n {\rm d}\sigma\right)  + 2 \mathcal J^{-1} \left(  \int_{\partial B_0}
z^{\bot} D(w) n {\rm d} \sigma \right) y^{\bot} & \text{ in $B_0$}.
\end{aligned}
\right.
$$
We note that this induces indeed an unbounded operator $\mathcal D(A) \to \mathcal L^{2}(\mathbb R)$ because
for any $W \in \mathcal D(A)$ we have $\mathcal AW \in [L^{2}(\mathbb R^2)]^2$ (so that in particular $\mathbb P$
corresponds actually to the $L^2$-projection). 

\medskip

\subsubsection{Previous analysis of $A$}
In \cite{tucsnaktakahashi2004} the properties of  $A$ are studied in this hilbertian framework.
We gather here the main conclusions. First, we have that the  unbounded operator $(A,\mathcal D(A))$ is an accretive self-adjoint positive operator on $\mathcal L^2$. Hence, the Cauchy problem
\begin{equation} \label{eq:CPabstrait}
\{
\begin{aligned}
& \partial_t V + A V = 0 \\
& V_{|_{t=0}} = V_0
\end{aligned}
\right.
\end{equation}
has a unique solution  for any $V_0 \in \mathcal L^2$  defining thus a contraction semi-group $(S(t))_{t>0}.$ The relations between this semi-group and our linearized system is the content of the following proposition:  
\begin{prop}
For  any $V_0 \in \mathcal L^2$, the unique solution 
$$
V \coloneqq S(t) V_0 \in C([0,\infty); \mathcal L^2) \cap C^1((0,\infty) ; \mathcal L^2) \cap C((0,\infty);\mathcal D(A))
$$
to the Cauchy problem \eqref{eq:CPabstrait}
yields a vector field $v$ and velocities $(\ell_v,\omega_v)$ satisfying
\begin{itemize}
\item $v \in C([0,\infty);L^2(\mathcal F_0)) \cap C((0,\infty) ; H^2(\mathcal F_0))$,
\item $(\ell_v,\omega_v) \in C([0,\infty); \mathbb R^2 \times \mathbb R) $,
\end{itemize}
and a pressure $p \in C((0,\infty); H^1_{loc}(\mathcal F_0))$ such that \eqref{eq:Stokes_modfirst}-\eqref{eq:Stokes_modlast} holds true with initial condition:
$$
\ell_v(0) = \ell_0, \quad \omega_v(0) = \omega_0, \quad v(0,\cdot) = v_0 \quad \text{ in $\mathcal F_0$}. 
$$
\end{prop}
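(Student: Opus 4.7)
The plan is to derive all stated properties from standard analytic semi-group theory applied to the self-adjoint positive operator $(A, \mathcal D(A))$ on $\mathcal L^2$, combined with an unpacking of the coupled fluid/solid structure encoded in $\mathcal L^2$, and a de Rham-type recovery of the pressure.

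Since a self-adjoint positive operator on a Hilbert space generates an analytic contraction semi-group, for any $V_0 \in \mathcal L^2$ the map $V(t) = S(t) V_0$ belongs to $C([0,\infty); \mathcal L^2)$ and, by the smoothing property of analytic semi-groups, $V(t) \in \mathcal D(A)$ for every $t > 0$, with $V \in C((0,\infty); \mathcal D(A)) \cap C^1((0,\infty); \mathcal L^2)$ and $\partial_t V + AV = 0$ in $\mathcal L^2$. The initial value $V(0) = V_0$ is automatic. Via the identification $V \leftrightarrow (v, \ell_v, \omega_v)$ attached to $\mathcal L^2$, setting $v = V_{|\mathcal F_0}$ and letting $(\ell_v, \omega_v)$ encode the rigid motion $V_{|B_0}$, these regularities transfer to the claims on $v$ and $(\ell_v, \omega_v)$: continuity in $\mathcal L^2$ controls $v$ in $L^2(\mathcal F_0)$ and $(\ell_v, \omega_v) \in \mathbb R^3$ (the latter are coordinates in a finite-dimensional subspace of $L^2(B_0)$), while $V(t) \in \mathcal D(A)$ yields $v(t) \in H^2(\mathcal F_0)$ with the announced continuity in time. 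The no-slip condition $v = \ell_v + \omega_v x^\perp$ on $\partial B_0$ is automatic from $V \in \mathcal H^1 \subset [H^1(\mathbb R^2)]^2$: the trace of $V$ on $\partial B_0$ computed from either side of the interface must coincide.

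It remains to recover the momentum equation and the Newton laws from $\partial_t V + \mathbb P \mathcal A V = 0$, together with the pressure $p$. Testing against divergence-free $\varphi \in [C^\infty_c(\mathcal F_0)]^2$ yields $\langle \partial_t v - \Delta v, \varphi \rangle_{L^2(\mathcal F_0)} = 0$, so that the de Rham theorem produces $p(t) \in H^1_{loc}(\mathcal F_0)$ (unique up to a constant) such that $\partial_t v - \Delta v + \nabla p = 0$ in $\mathcal F_0$, establishing \eqref{eq:Stokes_modfirst}; the time-continuity of $p$ in $H^1_{loc}(\mathcal F_0)$ is inherited from that of $\partial_t v$ and $\Delta v$ in $L^2_{loc}(\mathcal F_0)$. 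The Newton laws are then recovered by testing $\partial_t V + AV = 0$ against rigid-motion fields supported in $B_0$, suitably extended as divergence-free elements of $\mathcal L^2$; the $H^2$-regularity of $v$ and the no-slip condition on $\partial B_0$ allow a boundary integration by parts that converts the surface integrals of $D(v) n$ built into the definition of $\mathcal A V$ on $B_0$ into exactly the force $\int_{\partial B_0} \Sigma(v,p) n \, \diff \sigma$ and the torque $\int_{\partial B_0} x^\perp \cdot \Sigma(v,p) n \, \diff \sigma$, matching \eqref{eq:linear}-\eqref{eq:angular}. The main obstacle will be precisely this last pressure-recovery and consistency step: the operator $A = \mathbb P \mathcal A$ has been tailored so that the projector $\mathbb P$ encodes the boundary fluxes of the Cauchy stress in a non-obvious way, and the test functions must be chosen carefully across $\partial B_0$ (where $V$ is continuous but $\nabla V$ jumps), using the Korn-type identity from Lemma \ref{lem:grad_sym} to isolate the rigid part of $\mathcal A V$ on $B_0$.
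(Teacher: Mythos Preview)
The paper does not actually prove this proposition: it is stated without proof as a summary of results from \cite{tucsnaktakahashi2004}, so there is no ``paper's own proof'' to compare against. Your sketch is correct and follows the standard route one finds in that reference: analytic semi-group regularity for the self-adjoint $A$, unpacking $V \leftrightarrow (v,\ell_v,\omega_v)$ via the structure of $\mathcal L^2$ and $\mathcal D(A)$, de Rham recovery of the pressure from $\partial_t v - \Delta v \perp L^2_\sigma(\mathcal F_0)$, and recovery of the Newton laws by testing against rigid test fields across $\partial B_0$.

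One small refinement: when you write ``rigid-motion fields supported in $B_0$, suitably extended as divergence-free elements of $\mathcal L^2$'', note that a pure translation $\ell$ is not in $L^2(\mathbb R^2)$, so the extension must be compactly supported. The paper itself builds exactly such test fields later in the proof of Proposition~\ref{prop_newnewt}, via $W = \nabla^\perp[\chi(y)(\ell\cdot y^\perp + \omega|y|^2/2)]$ with a cutoff $\chi$; this is the natural choice here as well, and the integration-by-parts identity proved there is precisely what converts the boundary integrals of $D(v)n$ encoded in $\mathcal A V$ into the full stress $\Sigma(v,p)n$. Your reference to Lemma~\ref{lem:grad_sym} is less directly relevant than that computation.
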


Remarking that the spaces $(\mathcal L^p)_{p \in (1,\infty)}$ share $\mathcal L^2 \cap C^{\infty}_c(\mathbb R^d) $ as dense subspace the properties of the semi-group $(S(t))_{t>0}$ are extended to the non-hilbertian setting in \cite{Ervedoza_Hillairet__disk_fluid}. This is the content of the following lemma:

\begin{lem}[{\cite[Theorem~1.1]{Ervedoza_Hillairet__disk_fluid}}] \label{lem:est_semigroup}
    For each $q \in (1, \infty)$, the fluid-structure operator $A$ generates a semi-group on $\mathcal{L}^q$ which satisfies:
    \begin{itemize}
    \item For all $p \in [q, \infty]$, there exists $K_1 = K_1 (p,q) > 0$ such that for every $V_0 \in \mathcal{L}^q$:
    \begin{equation*}
        \norm{S(t) V_0}_{\mathcal{L}^p} \leq K_1 t^{\frac{1}{p} - \frac{1}{q}} \norm{V_0}_{\mathcal{L}^q} \qquad \textnormal{for all } t > 0.
    \end{equation*}

    \item If $q \leq 2$,  for $p \in [q, 2]$, there exists $K_2 = K_2 (p, q) > 0$ such that for every $V_0 \in \mathcal{L}^q$:
    \begin{equation*}
        \norm{\nabla S(t) V_0}_{L^p (\mathcal{F}_0)} \leq K_2 t^{- \frac{1}{2} + \frac{1}{p} - \frac{1}{q}} \norm{V_0}_{\mathcal{L}^q} \qquad \textnormal{for all } t > 0.
    \end{equation*}

    \item For $p \in [\max (2, q), \infty)$, there exists $K_3 = K_3 (p, q) > 0$ such that for every $V_0 \in \mathcal{L}^q$:
   $$
        \norm{\nabla S (t) V_0}_{L^p (\mathcal{F}_0)} \leq 
            \begin{cases}
           K_3 t^{- \frac{1}{2} + \frac{1}{p} - \frac{1}{q}} \norm{V_0}_{\mathcal{L}^q} & \textnormal{for all } 0 < t < 1, \\
                K_3 t^{- \frac{1}{q}} \norm{V_0}_{\mathcal{L}^q} & \textnormal{for all } t \geq 1.
            \end{cases}
    $$
     \end{itemize}
\end{lem}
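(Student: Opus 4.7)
The plan is to reduce the estimates to three main ingredients, which can be combined by duality, semigroup composition, and interpolation. Since $A$ is a positive self-adjoint operator on $\mathcal{L}^2$, the Hilbertian spectral calculus gives a contraction analytic semigroup on $\mathcal{L}^2$ and the identity $\|A^{1/2} S(t) V_0\|_{\mathcal{L}^2} \lesssim t^{-1/2} \|V_0\|_{\mathcal{L}^2}$, which together with Korn's identity (Lemma \ref{lem:grad_sym}) already controls $\|\nabla S(t) V_0\|_{L^2(\mathcal{F}_0)}$. The hard part is transferring these bounds to the scale of $\mathcal{L}^q$ spaces with $q \neq 2$ and carrying out the sharp decay analysis in 2D.

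First I would establish that $-A$ generates an analytic semigroup on $\mathcal{L}^q$ for $q \in (1,\infty)$ by resolvent analysis. The equation $(\lambda + A) V = F$ unfolds to a generalized Stokes system in $\mathcal{F}_0$ with the coupling conditions $v = \ell + \omega x^\perp$ on $\partial B_0$ and $(\ell,\omega)$ slaved to the integrated normal stress. Following \cite{WangXin2011}, one shows that in a sector $|\arg \lambda| < \pi/2 + \varepsilon$ the resolvent satisfies $\|(\lambda + A)^{-1} F\|_{\mathcal{L}^q} \lesssim |\lambda|^{-1} \|F\|_{\mathcal{L}^q}$, which gives the analytic semigroup on $\mathcal{L}^q$ coinciding with the Hilbertian one on the dense subspace $\mathcal{L}^2 \cap \mathcal{L}^q$.

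Next I would derive the $\mathcal{L}^q \to \mathcal{L}^p$ decay. The strategy is to establish first $\mathcal{L}^q \to \mathcal{L}^2$ for $q \in (1,2)$ by comparing $S(t) V_0$ with its free-space Oseen counterpart and absorbing the boundary and rigid-motion correction via the explicit ODEs for $(\ell,\omega)$; this exploits the exterior-domain Stokes estimates of Dan--Shibata / Hishida--Maremonti. Self-adjointness of $A$ on $\mathcal{L}^2$ dualizes this into $\mathcal{L}^2 \to \mathcal{L}^{q'}$ for $q' > 2$, and the semigroup identity $S(t) = S(t/2) S(t/2)$ combined with Riesz--Thorin interpolation then yields the announced rate $t^{1/p-1/q}$ throughout $p \geq q$.

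Finally, the gradient estimates split along $t < 1$ and $t \geq 1$. For short times and any $p \in [\max(2,q),\infty)$, analyticity gives $\|\nabla S(t) V_0\|_{L^p(\mathcal{F}_0)} \lesssim t^{-1/2} \|S(t/2) V_0\|_{\mathcal{L}^p}$, and composing with the $L^p$-$L^q$ bound produces $t^{-1/2 + 1/p - 1/q}$; the same argument works for all $p \in [q,2]$ with $q \leq 2$. The genuinely delicate case is $p > 2$, $t \geq 1$: here one writes $S(t) = S(1) S(t-1)$, uses the already proved $\mathcal{L}^q \to \mathcal{L}^2$ decay with rate $(t-1)^{1/2 - 1/q}$, and applies the bounded map $\nabla S(1) : \mathcal{L}^2 \to L^p(\mathcal{F}_0)$; the resulting decay is only $t^{-1/q + 1/2} \cdot 1$, which combined with the $L^p$ bound of $\nabla S(1)$ gives $t^{-1/q}$ rather than the formally faster $t^{-1/2-1/q+1/p}$. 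I expect this step to be the main obstacle: it reflects the 2D Stokes paradox, namely that one cannot improve the spatial decay of the gradient by iterating the semigroup because the translation velocity $|\ell|$ embedded in $V$ itself only decays at rate $t^{-1/q}$ (by Corollary \ref{cor:est_l_GN}), which acts as a floor for any $L^p$-gradient estimate with $p > 2$.
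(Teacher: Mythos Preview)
The paper does not contain a proof of this lemma: it is quoted as \cite[Theorem~1.1]{Ervedoza_Hillairet__disk_fluid} and used as a black box, so there is nothing in the present text to compare your argument against. Your outline is broadly in the spirit of how such $L^p$--$L^q$ smoothing estimates for fluid--structure semigroups are obtained in that reference (resolvent/analyticity from \cite{WangXin2011}, then decay by comparison with the whole-space heat/Stokes flow plus duality and interpolation), and your explanation of why the large-time gradient bound for $p>2$ saturates at $t^{-1/q}$ captures the right 2D obstruction.

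One small correction: in your last paragraph you invoke Corollary~\ref{cor:est_l_GN} to justify the floor $|\ell(t)|\lesssim t^{-1/q}$, but that corollary is a static Gagliardo--Nirenberg bound relating $|\ell_v|$ to $\|V\|_{\mathcal L^2}$ and $\|\nabla V\|_{L^2}$, not a decay statement; the rate $t^{-1/q}$ for $\ell$ is itself one of the outputs of the cited theorem (and is proved in \cite{Ervedoza_Hillairet__disk_fluid} by an explicit representation of the rigid velocity through the resolvent), so citing it here would be circular. If you want to motivate the $t^{-1/q}$ ceiling heuristically, it is cleaner to say that $\nabla S(1):\mathcal L^2\to L^p(\mathcal F_0)$ is merely bounded (with no extra gain) because $V\mapsto \ell_V$ is not controlled by $\|\nabla V\|_{L^2}$ alone in 2D.
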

The above estimates for the gradient are only on $\mathcal{F}_0$. However, when $V_0 \in \mathcal L^2,$ $V(t) = S(t) V_0$ is in $\mathcal{H}^1$ (since it is in $\mathcal D(A))$ for $t >0$ so that Lemma \ref{lem:grad_sym} applies.  Thus, the estimates in Lemma \ref{lem:est_semigroup} are sufficient to get a full $\mathcal{H}^1$ estimate.
Last, we also recall duality decay estimates as shown in \cite{Ervedoza_Hillairet__disk_fluid}.

\begin{lem}[{\cite[Corollaries~3.10~and~3.11]{Ervedoza_Hillairet__disk_fluid}}] \label{lem:duality_est}
    Assume $1 < q \leq p < \infty$ and let $F \in L^q (\mathbb{R}^2; M_2 (\mathbb{R}))$ satisfying $F = 0$ on $B_0$. The following decay estimates for $V(t) = S(t) \mathbb{P} \divg F$ hold true:
    \begin{itemize}
    \item if $q \geq 2$, there exists $K_4 = K_4 (p, q) > 0$ such that for all $t > 0$:
    \begin{equation*}
        \norm{V (t)}_{\mathcal{L}^p} \leq K_4 \, t^{-\frac{1}{2} + \frac{1}{p} - \frac{1}{q}} \norm{F}_{L^q (\mathbb{R}^2)}.
    \end{equation*}

    \item if $q \leq 2$, there exists $K_5 = K_5 (p,q) > 0$ such that:
    \begin{equation*}
        \norm{V(t)}_{\mathcal{L}^p} \leq
        \begin{cases}
            K_5 \, t^{-\frac{1}{2} + \frac{1}{p} - \frac{1}{q}} \norm{F}_{L^q (\mathbb{R}^2)}& \qquad \textnormal{for all } 0 < t < 1, \\
            K_5 \, t^{-1 + \frac{1}{p}} \norm{F}_{L^q (\mathbb{R}^2)}& \qquad \textnormal{for all } 1 \leq t.
        \end{cases}
    \end{equation*}

    \item There exists $K_\ell = K_\ell (q) > 0$ such that for all $t > 0$:
    \begin{equation*}
        \abs{\ell_{V(t)}} \leq K_\ell \, t^{-\frac{1}{2} - \frac{1}{q}} \norm{F}_{L^q (\mathbb{R}^2)}.
    \end{equation*}
    \end{itemize}
\end{lem}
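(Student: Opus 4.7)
The strategy I would follow is a duality argument against the fluid--structure semi-group $(S(t))_{t\geq 0}$, exploiting the self-adjointness of $A$ on $\mathcal L^2$ together with the gradient estimates from Lemma~\ref{lem:est_semigroup}. For a smooth divergence-free test field $W_0 \in \mathcal L^{p'} \cap \mathcal L^2$ I would begin from the identity
\begin{equation*}
\langle V(t), W_0\rangle_{\mathcal L^2} = \langle S(t) \mathbb P \divg F, W_0\rangle_{\mathcal L^2} = \langle \mathbb P \divg F, S(t) W_0\rangle_{\mathcal L^2},
\end{equation*}
where the second equality uses the self-adjointness of $S(t)$ on $\mathcal L^2$. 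Since $\mathbb P$ is the orthogonal projection for the weighted $\mathcal L^2$ inner product and $S(t) W_0 \in \mathcal L^2$, this reduces to a pairing of $\divg F$ against $S(t)W_0$. Approximating $F$ by smooth compactly supported matrix fields vanishing in a neighbourhood of $B_0$ and integrating by parts in $\mathcal F_0$ (the boundary contributions on $\partial B_0$ vanish because $F = 0$ there, and those at infinity vanish by compact support) yields
\begin{equation*}
\langle V(t), W_0\rangle_{\mathcal L^2} = - \int_{\mathcal F_0} F : \nabla S(t) W_0 \, dx,
\end{equation*}
which extends by density to all $F \in L^q(\mathbb R^2)$ with $F|_{B_0} = 0$. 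H\"older then delivers the master estimate $|\langle V(t), W_0\rangle| \leq \|F\|_{L^q} \|\nabla S(t) W_0\|_{L^{q'}(\mathcal F_0)}$.

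The first two assertions then follow by feeding this pairing the gradient estimates of Lemma~\ref{lem:est_semigroup}, with $W_0 \in \mathcal L^{p'}$ playing the role of the initial data, the lemma exponent ``$q$'' taken equal to $p'$, and the lemma output exponent taken equal to $q'$. When $q \geq 2$ one has $p' \leq q' \leq 2$ so the second bullet of Lemma~\ref{lem:est_semigroup} produces the clean rate $t^{-\frac 12 + \frac{1}{q'} - \frac{1}{p'}} = t^{-\frac 12 + \frac 1p - \frac 1q}$ uniformly in $t > 0$. When $q \leq 2$ one has $q' \geq \max(2, p')$ and the third bullet applies, yielding the same rate for $t < 1$ and the slower rate $t^{-1 + 1/p}$ for $t \geq 1$. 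Taking the supremum over $\|W_0\|_{\mathcal L^{p'}} \leq 1$ --- using density of $\mathcal L^2 \cap \mathcal L^{p'}$ in $\mathcal L^{p'}$ for $p' \in (1, \infty)$ --- gives the $\mathcal L^p$ bounds on $V(t)$.

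For the last assertion, I would fix a unit vector $e \in \mathbb R^2$ and select a smooth compactly supported divergence-free vector field $\phi_e$ which coincides with the constant field $e$ on $B_0$, so that $\phi_e \in \mathcal L^r$ for every $r \in (1, \infty)$. Since $\int_{B_0} x^\perp \, dx = 0$ and $V(t)|_{B_0} = \ell_{V(t)} + \omega_{V(t)} x^\perp$, one obtains the identity
\begin{equation*}
m \, \ell_{V(t)} \cdot e = \langle V(t), \phi_e\rangle_{\mathcal L^2} - \int_{\mathcal F_0} V(t) \cdot \phi_e \, dx,
\end{equation*}
whose first summand is controlled by the master pairing bound applied with $W_0 = \phi_e$ and whose second summand is controlled by H\"older together with the bulk $\mathcal L^p$ estimate just established. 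Letting the integrability index of $\phi_e$ approach its endpoint recovers the sharp rate $t^{-\frac 12 - \frac 1q}$ for $t \leq 1$. The step I expect to be the main obstacle is the large-time regime when $q < 2$: there, $-\tfrac 12 - \tfrac 1q < -1$, so the claimed rate is strictly faster than what either the gradient estimate of Lemma~\ref{lem:est_semigroup} (which saturates at $t^{-1/p'}$, hence at best $t^{-1}$ as $p'$ tends to $1$) or a single semi-group factorisation produces. To recover it, I would factor $V(t) = S(t/2) V(t/2)$, apply the duality pairing bound above to $V(t/2)$ in the intermediate space $\mathcal L^{p_0}$ so as to retain the full $t^{-1/2}$ gain due to the divergence structure of $\mathbb P \divg F$, and then apply the $\mathcal L^{p_0} \to \mathcal L^\infty$ bound of the first bullet of Lemma~\ref{lem:est_semigroup} to $S(t/2)$, optimising in $p_0$.
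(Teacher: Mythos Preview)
The paper does not give its own proof of this lemma; it is quoted verbatim from Corollaries~3.10 and~3.11 of \cite{Ervedoza_Hillairet__disk_fluid}. Your duality argument is exactly the natural route and, for the first two bullets, it is correct: the pairing identity together with the integration by parts reduce the $\mathcal L^p$ bound on $V(t)$ to the gradient estimates of Lemma~\ref{lem:est_semigroup} applied to $S(t)W_0$ with $W_0 \in \mathcal L^{p'}$, precisely as you describe.

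For the third bullet your argument is fine when $q \geq 2$, and for $t \leq 1$ in general (more cleanly via the factorisation $V(t)=S(t/2)V(t/2)$ together with the $\mathcal L^{p_0}\to\mathcal L^\infty$ bound, which avoids sending the integrability index of $\phi_e$ to an endpoint where the constants of Lemma~\ref{lem:est_semigroup} are not uniform). However, your proposed repair for the regime $q<2$, $t\geq 1$ does not close. Once you estimate $\|V(t/2)\|_{\mathcal L^{p_0}}$ by the second bullet you have just established, you are forced to use the large-time case, which gives only $(t/2)^{-1+1/p_0}\|F\|_{L^q}$: the dependence on $q$ is already gone. Composing with $\|S(t/2)\|_{\mathcal L^{p_0}\to\mathcal L^\infty}\lesssim (t/2)^{-1/p_0}$ then yields $|\ell_{V(t)}|\lesssim t^{-1}$, strictly weaker than the claimed $t^{-1/2-1/q}$ since $1/2+1/q>1$ when $q<2$. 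The divergence structure of the source is lost the moment you pass through an $\mathcal L^{p_0}$ norm at the intermediate time, so no choice of $p_0$ can recover it. Obtaining the full rate for $q<2$ and large $t$ genuinely requires more than the black-box decay of Lemma~\ref{lem:est_semigroup}; in \cite{Ervedoza_Hillairet__disk_fluid} it comes from a finer analysis of the semi-group. That said, the present paper only ever invokes the $\ell$-estimate with exponent $r>2$ (see the proof of Proposition~\ref{prop:Lq}), so this gap has no bearing on the applications made here.
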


\subsubsection{Further material on $A$}

In this part, we complement the analysis  of $A$ with more properties of its fractional powers.
The fluid-structure operator $A$ being self-adjoint and positive definite, we may define $A^{\mu}$ for $\mu \in (-1,1)$
through its spectral representation  \cite[Section II.3.2]{Sohr}. Since $A$ is injective, we have that these fractional powers (either positive or negative) are positive self-adjoint operators with dense domains. 

\medskip

Our first proposition concerns the square-root of $A.$
\begin{lem} \label{lem:A_div_bounded}
\begin{enumerate}
\item 
We have
$\mathcal D(A^{\frac 12}) = \mathcal H^1(\mathbb R^2)$ and 
\begin{equation} \label{eq:link_A_nabla}
\|A^{\frac 12} V\|_{\mathcal L^2} = \sqrt{2} \|D(v)\|_{L^2(\mathcal F_0)}.
\end{equation}
\item 
{
Let $F \in [C^{\infty}_c(\mathcal F_0)]^{2\times 2}$ then, 
$$
{\mathbb P}{\rm div} F  \in \mathcal D(A^{- \frac 12})  
\quad 
\text{ with }
\quad
\|A^{-\frac 12} \mathbb P {\rm div} F\|_{\mathcal L^2} \leq  \|F\|_{L^2(\mathcal F_0)}.
$$}
\end{enumerate}
\end{lem}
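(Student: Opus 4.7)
The plan is to deduce both items from the standard spectral-theoretic characterisation of fractional powers of a self-adjoint positive operator. Part~(1) amounts to identifying the form domain of $A$, and Part~(2) will follow by duality between $\mathcal D(A^{1/2})$ and $\mathcal D(A^{-1/2})$. The two main inputs are the quadratic-form identity $\langle AV,V\rangle_{\mathcal L^2}=2\|D(v)\|_{L^2(\mathcal F_0)}^2$ on $\mathcal D(A)$ and the Korn-type equality of Lemma~\ref{lem:grad_sym}.

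For Part~(1), I would first take $V\in\mathcal D(A)$ and compute $\langle AV,V\rangle_{\mathcal L^2}=\langle \mathcal AV,V\rangle_{\mathcal L^2}$, using that $\mathbb P$ is the orthogonal projector for the weighted inner product on $\mathcal L^2$. In the fluid contribution $\int_{\mathcal F_0}(-\Delta v)\cdot v$, integration by parts yields $\int_{\mathcal F_0}|\nabla v|^2-\int_{\partial B_0}(\partial_n v)\cdot v\,\mathrm d\sigma$; the incompressibility of $v$ together with the symmetric-gradient identity allows one to replace $|\nabla v|^2$ by $2|D(v)|^2$ up to a boundary term involving $2(D(v)n)\cdot v$. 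In the solid contribution, the factor $m/\pi$ of the weight combines with the relations $\int_{B_0}\mathrm dy=\pi$, $\int_{B_0}y^\perp\,\mathrm dy=0$, and $\mathcal J=\tfrac{m}{\pi}\int_{B_0}|y^\perp|^2\,\mathrm dy$ to transform the two integrals over $\partial B_0$ defining $\mathcal AV$ on $B_0$ into exactly $2\int_{\partial B_0}(D(v)n)\cdot(\ell_v+\omega_v y^\perp)\,\mathrm d\sigma$, which cancels the fluid boundary term via the no-slip condition $v=\ell_v+\omega_v y^\perp$ on $\partial B_0$. One is left with $2\|D(v)\|_{L^2(\mathcal F_0)}^2$. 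Invoking the spectral theorem, $\mathcal D(A^{1/2})$ is the completion of $\mathcal D(A)$ with respect to $V\mapsto(\|V\|_{\mathcal L^2}^2+\langle AV,V\rangle_{\mathcal L^2})^{1/2}$, which by the identity above and Lemma~\ref{lem:grad_sym} agrees with the $\mathcal H^1$-norm. Density of $\mathcal D(A)$ in $\mathcal H^1$ (both contain the smooth compactly supported solenoidal vector fields on $\mathbb R^2$, which are dense in $\mathcal H^1$) then yields $\mathcal D(A^{1/2})=\mathcal H^1$ and the claimed identity.

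For Part~(2), I would use the duality characterisation $\mathbb P\operatorname{div}F\in\mathcal D(A^{-1/2})$ with
\begin{equation*}
\|A^{-1/2}\mathbb P\operatorname{div}F\|_{\mathcal L^2}=\sup_{V\in\mathcal D(A^{1/2})\setminus\{0\}}\frac{|\langle \mathbb P\operatorname{div}F,V\rangle_{\mathcal L^2}|}{\|A^{1/2}V\|_{\mathcal L^2}},
\end{equation*}
valid because $A^{1/2}$ has dense range and $A^{-1/2}$ is self-adjoint. For $V\in\mathcal H^1$, orthogonality of $\mathbb P$ for the weighted inner product gives $\langle\mathbb P\operatorname{div}F,V\rangle_{\mathcal L^2}=\langle\operatorname{div}F,V\rangle_{\mathcal L^2}$; since $F$ is compactly supported in the open set $\mathcal F_0$, the solid piece vanishes and $F$ vanishes near $\partial B_0$, so integration by parts in $\mathcal F_0$ yields $\langle\mathbb P\operatorname{div}F,V\rangle_{\mathcal L^2}=-\int_{\mathcal F_0}F:\nabla v$ with no boundary contribution. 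Cauchy--Schwarz then provides $|\langle\mathbb P\operatorname{div}F,V\rangle_{\mathcal L^2}|\leq\|F\|_{L^2(\mathcal F_0)}\|\nabla v\|_{L^2(\mathcal F_0)}\leq\|F\|_{L^2(\mathcal F_0)}\|\nabla V\|_{L^2(\mathbb R^2)}$, and Lemma~\ref{lem:grad_sym} together with Part~(1) gives $\|\nabla V\|_{L^2(\mathbb R^2)}=\|A^{1/2}V\|_{\mathcal L^2}$, closing the estimate.

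The main obstacle lies in Part~(1): carefully organising the integration by parts so that the fluid and solid boundary contributions cancel through the no-slip relation, and checking that the weight $m/\pi$ together with the coefficients $2/m$ and $2\mathcal J^{-1}$ in the definition of $\mathcal A$ conspire to produce precisely $2\int_{\partial B_0}(D(v)n)\cdot(\ell_v+\omega_v y^\perp)\,\mathrm d\sigma$. This is essentially the bookkeeping already embedded in the construction of $A$, but it is where the identity $\|A^{1/2}V\|_{\mathcal L^2}=\sqrt 2\|D(v)\|_{L^2(\mathcal F_0)}$ really has content; everything else is then routine from spectral calculus, density, and Lemma~\ref{lem:grad_sym}.
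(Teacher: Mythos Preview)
Your proposal is correct and follows essentially the same approach as the paper: for Part~(1) the paper simply refers to \cite[p.~63]{tucsnaktakahashi2004}, whereas you spell out the underlying integration-by-parts/form-identity argument; for Part~(2) both you and the paper use the identical duality computation $\langle \mathbb P\operatorname{div}F, V\rangle_{\mathcal L^2}=-\int_{\mathcal F_0}F:\nabla v$ (the paper phrases it as testing against $A^{-1/2}w$ for $w\in\mathcal D(A^{-1/2})$, which is your sup characterisation in disguise) followed by Cauchy--Schwarz and the identification $\|\nabla V\|_{L^2(\mathbb R^2)}=\|A^{1/2}V\|_{\mathcal L^2}$.
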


\begin{proof}
{
We refer to \cite[p. 63]{tucsnaktakahashi2004} for a proof of the first item.
As for the second item, we follow \cite{Gallay_Maekawa__NS_obstacle} and propose a proof based on the approach of \cite[Lemma III-2.6.1]{Sohr}.  Since $A^{-\frac 12}$ is self-adjoint, and because of the identities \eqref{eq:grad_sym} and \eqref{eq:link_A_nabla}, our proof reduces to obtaining the bound:
$$
|\langle \mathbb P {\rm div} F, A^{-\frac 12} w \rangle| \leq \|F\|_{L^2(\mathcal F_0)}\|\nabla A^{-\frac 12} w\|_{L^2(\mathbb R^2)} \quad 
\forall \, w \in \mathcal D(A^{-\frac 12})
$$
Let  $w \in \mathcal D(A^{-\frac 12})$ so that there exists $v \in \mathcal D(A^{\frac 12})$ for which $w = A^{\frac 12}v$ (and thus $v = A^{-\frac 12}w)$. We have then by definition of projectors $\mathbb P$ and integration by parts:
\begin{align*}
\langle \mathbb P {\rm div} F, A^{-\frac 12} w \rangle & =  \dfrac{m}{\pi} \int_{B_0} {\rm div} F \cdot v 
+ \int_{\mathcal F_0} {\rm div} F \cdot v \\
& =   -  \int_{\mathcal F_0} F :  \nabla v.
\end{align*}
We conclude with a standard Cauchy-Schwarz inequality 
}
\end{proof}

In the proof above, if we do not make further assumption on the support of $F$ and
take $w \in\mathcal D(A^{- \frac 12})$, the last identity yields:
$$
\langle \mathbb P {\rm div} F, A^{-\frac 12} w \rangle =
\left( \dfrac{m}{\pi} - 1 \right) \int_{\partial B_0} Fn \cdot v  - \int_{\mathcal F_0} F :  \nabla v  
$$
where:
$$
\int_{\partial B_0} Fn \cdot v {\rm d}\sigma  =  \int_{\partial B_0} Fn {\rm d}\sigma \cdot \ell_v + 
\int_{\partial B_0} Fn \cdot n^{\bot} {\rm d}\sigma \omega .  
$$
To relax the assumption on the support of $F$ we should be able to control this further term by $\|\nabla v\|_{L^2(\mathbb R^2)}.$ This implies to obtain the boundedness
of the mapping $v \mapsto \ell_v$ on $\mathcal D(A)$ endowed with the $\dot{H}^{1}(\mathbb R^2)$ topology.
However again, the Stokes paradox implies that this property does not hold true. 
{ With the above computations, we can extend $A^{-1/2}\mathbb P {\rm div}$ by density into a mapping $(L^2(\mathcal F_0))^{2\times 2} \to \mathcal L^2.$ For the further analysis,  we need to analyze the  relations that exists then between $A^{1/2}S(\tau) [A^{-1/2} \mathbb P{\rm div}]$ and $S(\tau)\mathbb P{\rm div}$ when $\tau >0.$ This is the content of the next corollary 

\begin{cor} \label{cor_zecor}
Let $F \in (L^2(\mathcal F_0))^{2 \times 2}$ such that ${\rm div F} \in (L^2(\mathcal F_0) + L^{4/3}(\mathcal F_0)))^2$ and $F \cdot n = 0$ on $\partial \mathcal F_0.$ For arbitrary $\tau >0,$  we have:
\begin{equation} \label{eq_identity_debase}
A^{1/2} S(\tau)[A^{-1/2}\mathbb P {\rm div} F] = S(\tau){\mathbb P}{\rm div} F. 
\end{equation}
\end{cor}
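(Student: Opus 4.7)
The plan is to prove \eqref{eq_identity_debase} first for smooth compactly supported $F$, where it reduces to a direct spectral-calculus commutation, and then to extend it by density to the general class covered by the statement.

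For $F \in [\mathcal C^{\infty}_c(\mathcal F_0)]^{2\times 2}$, Lemma \ref{lem:A_div_bounded} gives $g := A^{-\frac 12}\mathbb P {\rm div} F \in \mathcal L^2$, and by construction $g \in \mathcal D(A^{\frac 12})$ with $A^{\frac 12} g = \mathbb P {\rm div} F$. Since $A$ is self-adjoint positive on $\mathcal L^2$, the semi-group $S(\tau) = e^{-\tau A}$ commutes with any function of $A$; in particular $A^{\frac 12} S(\tau) = S(\tau) A^{\frac 12}$ on $\mathcal D(A^{\frac 12})$. Applying this operator identity to $g$ immediately yields
$A^{\frac 12}S(\tau)[A^{-\frac 12}\mathbb P {\rm div} F] = S(\tau) A^{\frac 12} g = S(\tau)\mathbb P {\rm div} F$,
which is \eqref{eq_identity_debase} in the smooth case.

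To extend this, approximate $F$ by a sequence $(F_n) \subset [\mathcal C^{\infty}_c(\mathcal F_0)]^{2\times 2}$ with $F_n \to F$ in $L^2(\mathcal F_0)$. The LHS depends continuously on $F$ in this topology: indeed, $A^{-\frac 12}\mathbb P {\rm div}$ extends by density to a bounded operator $(L^2(\mathcal F_0))^{2\times 2} \to \mathcal L^2$ thanks to the estimate in Lemma \ref{lem:A_div_bounded}, and for each fixed $\tau > 0$, $A^{\frac 12}S(\tau)$ is bounded on $\mathcal L^2$ by analyticity of the semi-group (with norm controlled by $C\tau^{-1/2}$). For the RHS, the condition $F \cdot n = 0$ on $\partial \mathcal F_0$—which makes sense in $H^{-1/2}(\partial \mathcal F_0)$ precisely because ${\rm div} F \in L^2 + L^{4/3}$—allows us to extend $F$ by zero to $\mathbb R^2$ without creating a boundary contribution in the distributional divergence; the extended tensor lies in $L^2(\mathbb R^2)$ and vanishes on $B_0$, so Lemma \ref{lem:duality_est} with $q = p = 2$ gives
$\|S(\tau)\mathbb P {\rm div}(F_n - F)\|_{\mathcal L^2} \leq K_4 \tau^{-1/2}\|F_n - F\|_{L^2} \to 0$.
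Passing to the limit on both sides of the smooth-case identity then produces \eqref{eq_identity_debase} in full generality.

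The main obstacle is not the commutation itself, which is a standard consequence of the Borel functional calculus for self-adjoint operators, but rather the bookkeeping that ensures both members are defined and depend continuously on $F$ in the same topology. The role of the hypotheses $F \cdot n = 0$ and ${\rm div} F \in L^2 + L^{4/3}$ is precisely to make the extension-by-zero procedure compatible with Lemma \ref{lem:duality_est}, which supplies the continuity of the RHS; the continuity of the LHS rests instead on Lemma \ref{lem:A_div_bounded} together with the analyticity of $(S(\tau))_{\tau > 0}$ on $\mathcal L^2$.
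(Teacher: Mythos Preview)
Your proof is correct and follows essentially the same route as the paper: establish \eqref{eq_identity_debase} for $F\in[\mathcal C^{\infty}_c(\mathcal F_0)]^{2\times 2}$ and pass to the limit along an approximating sequence. The only cosmetic difference is that the paper arranges the approximation so that ${\rm div}\,F_n \to {\rm div}\,F$ in $L^2+L^{4/3}$ and uses this convergence directly for the right-hand side, whereas you control the right-hand side through the $L^2$ bound of Lemma~\ref{lem:duality_est}; both are valid.
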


\begin{proof}
Since $F \cdot n = 0$ on $\partial \mathcal F_0$ we can  construct $F_n \in C^{\infty}_c(\mathcal F_0)$ 
by a dilation/truncation and mollifying argument (see also \cite[Theorem 1.3]{Temam} for instance) such that we have simultaneously
$$
F_n \to F \text{ in $[L^2(\mathcal F_0)]^{2\times 2}$},
\quad 
{\rm div} F_n \to {\rm div} F \text{ in $[L^2(\mathbb R^2) + L^{4/3}(\mathbb R^2)]^2 $},
$$
Since the identity \eqref{eq_identity_debase} holds true at the level of $F_n$ it extends to $F$ by letting $n$ go to infinity. 
\end{proof}
}

\medskip

We proceed with the analysis of the range of $A^{\mu}$ for $\mu \in (0,1/2)$ corresponding
to \cite[Lemma 5.1]{Gallay_Maekawa__NS_obstacle}. This is the content of the next lemma:

%{\color{blue} 
\begin{lem} \label{lem:fractional}
Let $q \in (1,2)$ and $\mu < 1/q - 1/2.$ For all $v \in \mathcal L^2(\mathbb R^2) \cap [L^{q}(\mathbb R^2)]^2$ there exists a unique $w \in \mathcal D(A^{\mu})$ such that $v= A^{\mu} w.$ Furthermore, there exists a constant $C = C(q,\mu)>0$ depending only on $q$ and $\mu$ for which $\|w\|_{\mathcal L^2} \leq C( \|v\|_{L^q(\mathbb R^2)} + \|v\|_{L^2(\mathbb R^2)}).$
\end{lem}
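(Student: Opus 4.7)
The plan is to construct $w$ explicitly via the Balakrishnan-type integral representation of negative fractional powers of a non-negative self-adjoint operator. Concretely, since $A$ is positive self-adjoint and generates the semi-group $(S(t))_{t \geq 0}$, spectral calculus gives, for $\mu \in (0,1)$,
\begin{equation*}
A^{-\mu} = \frac{1}{\Gamma(\mu)} \int_0^\infty t^{\mu-1} S(t) \, \diff t,
\end{equation*}
at least on vectors for which the right-hand side makes sense as a Bochner integral in $\mathcal L^2.$ I would therefore define
\begin{equation*}
w := \frac{1}{\Gamma(\mu)} \int_0^\infty t^{\mu-1} S(t) v \, \diff t
\end{equation*}
and show that the integral converges in $\mathcal L^2$, yielding the required bound, and then identify $w$ as $A^{-\mu} v$ so that $w \in \mathcal D(A^\mu)$ and $A^\mu w = v$.

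The convergence analysis splits at $t=1$. First observe that the assumption $v \in \mathcal L^2 \cap [L^q(\mathbb R^2)]^2$ automatically implies $v \in \mathcal L^q$, because the defining constraints of $\mathcal L^q$ (divergence-free in $\mathbb R^2$, rigid-body on $B_0$) are already imposed by $\mathcal L^2$. For the small-time part, I use the contraction property $\|S(t)v\|_{\mathcal L^2} \le \|v\|_{\mathcal L^2}$ (recall $A$ is accretive), which yields
\begin{equation*}
\left\| \int_0^1 t^{\mu-1} S(t) v \, \diff t \right\|_{\mathcal L^2} \le \frac{1}{\mu} \|v\|_{\mathcal L^2}.
\end{equation*}
For the large-time part, I apply Lemma \ref{lem:est_semigroup} with $p=2$, which gives $\|S(t)v\|_{\mathcal L^2} \le K_1 t^{\frac12 - \frac1q} \|v\|_{\mathcal L^q}$ for $t>0$, so that
\begin{equation*}
\left\| \int_1^\infty t^{\mu-1} S(t) v \, \diff t \right\|_{\mathcal L^2} \le K_1 \|v\|_{\mathcal L^q} \int_1^\infty t^{\mu - \frac12 - \frac1q - \frac12}\, \diff t,
\end{equation*}
and the last integral converges precisely under the hypothesis $\mu < \frac1q - \frac12$. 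Adding both contributions provides the asserted bound $\|w\|_{\mathcal L^2} \le C(q,\mu)(\|v\|_{L^q} + \|v\|_{L^2})$.

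It then remains to verify that $w \in \mathcal D(A^\mu)$ with $A^\mu w = v$. This is the main technical step but it is a standard consequence of the spectral theorem: applying the multiplication-operator representation of $A$, one checks that $A^\mu \int_0^\infty t^{\mu-1} S(t) v \, \diff t = \Gamma(\mu) v$ whenever the integrals involved are absolutely convergent, which is guaranteed by the estimates above. Uniqueness of $w$ follows immediately from the injectivity of $A^\mu$: since $A$ is positive definite it is injective, hence so is any positive power, so $A^\mu w_1 = A^\mu w_2$ forces $w_1 = w_2$. I expect the only delicate point to be making the spectral-calculus identity $A^\mu A^{-\mu} v = v$ rigorous in this non-Hilbertian framework for the source $v$; this can be bypassed by first establishing the identity for $v \in \mathcal D(A) \cap L^q$ by direct computation (using the semigroup identity $S(t) = -\int_t^\infty A S(s)\,\diff s$ on $\mathcal D(A)$) and then extending to all $v \in \mathcal L^2 \cap L^q$ by density together with the bound just derived.
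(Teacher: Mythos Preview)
Your approach is correct and genuinely different from the paper's. You use the Balakrishnan representation
\[
w=\frac{1}{\Gamma(\mu)}\int_0^\infty t^{\mu-1}S(t)v\,\diff t,
\]
splitting at $t=1$ and invoking the $\mathcal L^q\to\mathcal L^2$ decay of the fluid--structure semi-group (Lemma~\ref{lem:est_semigroup}) for large times; this yields the bound in two lines. The paper instead works through the resolvent representation $(A+\varepsilon)^{-\mu}=c_\mu\int_0^\infty(\lambda+\varepsilon)^{-\mu}(A+\lambda+\varepsilon)^{-1}\diff\lambda$, applies a Heinz--Kato comparison to pass from $A$ to the Laplace-type operator $\tilde{\mathcal A}$, and then decomposes the resolvent $(\tilde{\mathcal A}+\lambda)^{-1}$ into the Dirichlet part $(\tilde{\mathcal A}_0+\lambda)^{-1}$ (handled via \cite{KozonoOgawa-decay}) plus an explicit boundary correction computed with modified Bessel functions (Proposition~\ref{prop:fractional} and Appendix~\ref{app:fractional}). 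Your route is far shorter because it treats the $\mathcal L^q$--$\mathcal L^2$ smoothing of $S(t)$ as a black box; the paper's route is longer but more self-contained in that it does not rely on those semi-group estimates and instead reduces everything to the classical Dirichlet Stokes theory plus explicit ODE solutions.

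Two small remarks. First, your displayed exponent contains an extra $-\tfrac12$: the integrand for $t>1$ is $t^{\mu-1}\cdot t^{1/2-1/q}=t^{\mu-1/2-1/q}$, which is integrable exactly when $\mu<\tfrac1q-\tfrac12$, so your stated conclusion is correct despite the slip. Second, the density argument you sketch for the identification $A^\mu w=v$ is slightly awkward because $v\in\mathcal D(A)$ does not a priori give $v\in\operatorname{Ran}(A^\mu)$; it is cleaner to argue directly via the spectral multiplication model: once the Bochner integral converges in $\mathcal L^2$, pairing against test functions and applying Tonelli shows $Uw=a^{-\mu}\,Uv$ in $L^2$ of the spectral space, hence $v\in\mathcal D(A^{-\mu})$ and $w=A^{-\mu}v$.
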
 
%}

We point out that, in this statement, the condition $v \in \mathcal L^2 \cap [L^q(\mathbb R^2)]^2$ reads also $v \in \mathcal L^2 \cap \mathcal L^q.$
What remains of this section is devoted to the proof of this result. We first remark that the proof of \cite[Lemma 5.1]{Gallay_Maekawa__NS_obstacle} yields from \cite[Lemma 2.2]{KozonoOgawa-decay}. So, our proof reduces mostly to check that the fluid-structure operator $A$ satisfies the key-properties necessary to reproduce the proofs of these latter lemmas (that were concerned initially with the standard Stokes operator { with homogeneous boundary conditions}). 
%{\color{blue} 
In comparison with these previous results, we have a loss in terms of the correspondence $q \to \mu$ and also in the control which involves the $L^2$-norm. In \cite{KozonoOgawa-decay} the authors obtain similar results with $\mu=1/q-1/2$ and a control with the $L^q$-norm only. It seems we might not get such optimal { bounds} in our case. But this will not depreciate the final result.
%}

\medskip

In \cite{KozonoOgawa-decay}, the properties of the Stokes operator are analyzed on $\mathcal F_0$ when complemented with vanishing boundary conditions.  The main argument is performed on a Laplace system and divergence-free constraints are then handled {\em via} abstract Heinz-Kato arguments (see \cite[Lemma II.3.2.3, p. 100]{Sohr}). 
With our setting, this Laplace operator reads as follows. 
We set:
$$
L^2_0[B_0]   := \{ V \in [L^2(\mathbb R^2)]^2 \text{ s.t. } V = 0 \text{ on $B_0$}\} 
$$
and $\mathbb P_0 : [L^2(\mathbb R^2)]^2 \to L^2_0[B_0]$ the corresponding orthogonal projection.
Then, we define the operator $\tilde{\mathcal A}_0$ by 
$$
\mathcal D(\tilde{\mathcal{A}}_0) :=\{ V \in L^2_0[B_0] \text{ s.t. } v \in  [H^2(\mathcal F_0)]^2\}.
$$
with
$$
\tilde{\mathcal A}_0[V] =  \mathbb{P}_0 [- \mathds{1}_{\mathcal F_0}\Delta V], \quad \forall \, V \in \mathcal D(\tilde{\mathcal A}_0).
$$

To take advantage of the analysis of \cite{KozonoOgawa-decay} in order to study the fractional powers of $A,$ we propose to use the same Heinz-Kato argument to handle the divergence-free constraint and to focus on the remaining Laplace equation (completed with non-standard integral boundary conditions) with the help of $\tilde{\mathcal A}_0.$ The operator $\tilde{\mathcal A}_0$ will take hold of the PDE and we shall complement the analysis with a fine study of our non standard boundary conditions. To this end, we first rewrite the integral boundary conditions introduced by $A.$ 
This is the content of the following lemma: 

\begin{prop} \label{prop_newnewt}
Let $V \in \mathcal D(A)$ then there holds:
$$
\mathcal AV = \dfrac{1}{m} \left( \int_{\partial B_0} \partial_n v {\rm d}\sigma \right) +  \mathcal{J}^{-1} \left( \int_{\partial B_0} z^{\bot} \cdot \partial_n v {\rm d}\sigma + 2 \omega_v \right) y^{\bot}
\quad  \text{ on $B_0$}.
$$ 
\end{prop}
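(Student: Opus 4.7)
The plan is to reduce the surface integrals of $D(v)n$ that appear in the definition of $\mathcal{A} V|_{B_0}$ to surface integrals of $\partial_n v$ plus an explicit contribution coming from $\omega_v$. The whole argument rests on establishing the pointwise identity
\begin{equation*}
2 D(v) n = \partial_n v + \omega_v\, x^\perp \qquad \text{on } \partial B_0,
\end{equation*}
after which integrating over $\partial B_0$ yields the result.

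To prove this identity, I would fix a point of $\partial B_0$ and work in the local orthonormal frame $(n, \tau)$ with $n = -x$ (inward normal to $B_0$) and $\tau = x^\perp$ (tangent). The rigid-motion trace $v|_{\partial B_0} = \ell_v + \omega_v\, x^\perp$, differentiated along the circle, yields $\partial_\tau v = \omega_v\, n$, which gives $(\nabla v)_{n\tau} = n \cdot \partial_\tau v = \omega_v$ and $(\nabla v)_{\tau\tau} = \tau \cdot \partial_\tau v = 0$. The incompressibility $\divg v = \mathrm{tr}(\nabla v) = 0$ then forces $(\nabla v)_{nn} = 0$, and writing $\gamma := \tau \cdot \partial_n v$ one obtains
\begin{equation*}
\nabla v = \begin{pmatrix} 0 & \omega_v \\ \gamma & 0 \end{pmatrix}
\end{equation*}
in this basis. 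Symmetrizing and applying to $n$ gives $2 D(v) n = (\omega_v + \gamma)\, \tau$, while $\partial_n v = \gamma\, \tau$ since its normal component vanishes. Subtracting proves the identity.

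With this in hand, one substitutes $2 D(v) n = \partial_n v + \omega_v\, z^\perp$ into each integrand appearing in $\mathcal{A} V|_{B_0}$ and invokes two elementary facts about the unit circle: $\int_{\partial B_0} z^\perp\, \mathrm{d}\sigma = 0$ by rotational symmetry, and $\int_{\partial B_0} |z^\perp|^2\, \mathrm{d}\sigma$ is just the length of $\partial B_0$ since $|z| = 1$. The first identity kills the $\omega_v$ contribution in the translation term and leaves $\tfrac{1}{m}\int_{\partial B_0}\partial_n v\, \mathrm{d}\sigma$, while the second identity produces the announced rotational contribution proportional to $\omega_v$ in the torque term.

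The argument is entirely a boundary computation; the only delicate point is the need to combine \emph{both} the rigid-motion trace (which controls only the tangential derivative) and the divergence-free condition (which fixes the normal component of $\partial_n v$) in order to close up the relation between $2 D(v) n$ and $\partial_n v$. Either ingredient alone would leave a free parameter in the matrix representation of $\nabla v$, and the identity would not hold.
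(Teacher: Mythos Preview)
Your proof is correct and takes a genuinely different route from the paper. The paper argues by duality: it tests both sides against an arbitrary rigid field $\ell + \omega z^\perp$, extends this to a compactly supported divergence-free $W$ on all of $\mathbb{R}^2$, and integrates by parts over $\mathcal{F}_0$ using the polarized Korn identity $\int_{\mathcal{F}_0} 2D(v):D(w) = \int_{\mathbb{R}^2}\nabla V:\nabla W$ from Lemma~\ref{lem:grad_sym}; the $\omega_v$ correction then appears as the $B_0$-contribution $\int_{B_0}\nabla V:\nabla W$. Your approach is more direct: you establish the pointwise boundary identity $2D(v)n = \partial_n v + \omega_v\, x^\perp$ on $\partial B_0$ by reading off the components of $\nabla v$ in the local $(n,\tau)$ frame from the rigid trace and the incompressibility constraint, then integrate. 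This is more elementary --- no auxiliary test function, no global integration by parts, no appeal to Lemma~\ref{lem:grad_sym} --- and the pointwise identity is a sharper intermediate result than the paper's integral one. One remark: carrying out your integration gives $\omega_v\int_{\partial B_0}|z^\perp|^2\, {\rm d}\sigma = 2\pi\,\omega_v$ in the torque term; tracking the paper's own chain of integrations by parts carefully also produces this factor (since $\int_{B_0}\nabla V:\nabla W = 2\pi\,\omega_v\omega$), so the constant $2$ in the displayed statement should read $2\pi$, which explains your careful phrasing ``proportional to $\omega_v$''.
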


\begin{proof}
It is sufficient to prove that, for any $V \in \mathcal D(A)$ and any $(\ell,\omega) \in \mathbb R^2 \times\mathbb R$ there holds:
$$
\int_{\partial B_0} 2 D(v) n \cdot (\ell + \omega z^{\bot}){\rm d}\sigma
 =    \int_{\partial B_0} \partial_n v {\rm d}\sigma \cdot \ell +  \left( \int_{\partial B_0} z^{\bot} \cdot \partial_n v {\rm d}\sigma + 2 \omega_v \right) \omega.
$$

So, let $V \in \mathcal D(A).$ Given $(\ell,\omega) \in \mathbb R^2 \times \mathbb R$ let:
$$
W = \nabla^{\bot} \left[ \chi(y) \left( \ell \cdot y^{\bot} + \omega |y|^2 \right)\right]
$$
where $\chi \in C^{\infty}_c(\mathbb R^2)$ is fixed but arbitrary satisfying $\mathds{1}_{B_0} \leq \chi \leq 1.$ We note that with such conventions, there holds $W \in \mathcal D(A)$
with $\ell_{W} = \ell$ and $\omega_{W} = \omega.$ We have then
by integration by parts (using several times that $w,W$ and $v,V$ are divergence free):
\begin{align*}
\int_{\partial B_0} 2 D(v) n \cdot (\ell + \omega z^{\bot}){\rm d}\sigma
& =\int_{\partial B_0} 2D(v) n \cdot w {\rm} d \sigma \\
& =  \int_{\mathcal F_0} {\rm div} (2 D(v)) \cdot w  + 2 D(v) : D(w) \\
& = \int_{\mathcal F_0} \Delta v \cdot w +  \int_{\mathbb R^2} \nabla V : \nabla W  \\
& = \int_{\partial B_0} \partial_n v \cdot w{\rm d}\sigma + 2 \omega_v \omega \\
& = \int_{\partial B_0} \partial_n v \cdot (\ell + \omega z^{\bot}) {\rm d}\sigma+ 2 \omega_v \omega.
\end{align*}
The term $\omega_v \omega$ appearing on the fourth line is the contribution of the (skew-symmetric part of the) gradients $\nabla V$ and $\nabla W$ on $B_0.$
This ends the proof.
\end{proof}

Thanks to  {\bf Proposition \ref{prop_newnewt}} we can now rewrite the fluid-structure operator $A = \mathbb P \tilde{\mathcal A}$ where $\tilde{\mathcal A}$ is defined (without the divergence-free constraint)
by the formula:
$$
\tilde{\mathcal A}W =
\{
\begin{aligned}
& - \Delta  w & \text{in $\mathcal F_0$}\\
& \dfrac{1}{m} \left( \int_{\partial B_0} \partial_n w {\rm d}\sigma \right) + \mathcal{J}^{-1} \left( \int_{\partial B_0} z^{\bot} \cdot \partial_n v {\rm d}\sigma + 2 \omega_v \right) y^{\bot}
&   \text{in $B_0$},
\end{aligned}
\right.
$$
for $W \in \mathcal D(\tilde{\mathcal A}) =  L^2[B_0] \cap { [H^1(\mathbb R^2)]^2}  \cap [H^2(\mathcal F_0)]^2$. Here, we denote:
$$
L^2[B_0] = \{ W \in {[L^2(\mathbb R^2)]^2} \text{ s.t. } W = \ell_{W} + \omega_{W} y^{\bot} \text{ on $B_0$} \}.
$$ 
We may reproduce here classical computations to obtain that $\tilde{\mathcal A}$ is a selfadjoint positive
operator on $L^2[B_0]$ since it is associated with the quadratic form:
$$
\langle \tilde{\mathcal A}W , V \rangle = \int_{\mathbb R^2} \nabla W : \nabla V, \quad 
\forall \, (W,V) \in \mathcal{D}(\tilde{\mathcal{A}}).
$$ 
We point out that the duality bracket is still the one associated with the disk density.
In particular, we have that (note that $\nabla W$ is the skew-symmetric matrix associated with $\omega_W$ on $B_0$): 
\begin{equation} \label{eq:sqrt1}
\|\tilde{\mathcal A}^{\frac 12} W\|^2_{L^2[B_0]} = \int_{\mathbb R^2} |\nabla W|^2  \quad \forall \, W \in \mathcal{D}(\tilde{\mathcal A}^{\frac 12}).
\end{equation}
and, for $\lambda >0:$ 
\begin{equation} \label{eq:sqrt2}
\|(\tilde{\mathcal A} + \lambda)^{\frac 12} W\|^2_{L^2[B_0]} =  \int_{\mathbb R^2} |\nabla W|^2 + 
\lambda  \langle W , W \rangle \quad \forall \, W \in \mathcal{D}(\tilde{\mathcal A}^{\frac 12x}).
\end{equation}
{ We recall that similar identities hold  with the operator $A.$} Thanks to these two latter identities, we can reproduce the procedure of \cite[Lemma 2.2] {KozonoOgawa-decay} and the proof of Lemma \ref{lem:fractional} reduces to obtaining the following proposition:
\begin{prop} \label{prop:fractional}
Let $q \in (1,2)$ and {$\mu <  1/q - 1/2.$} For all $\varepsilon > 0$, there exists a mapping
$R_{\mu,\varepsilon}:  L^2[B_0] \cap [L^{q}(\mathbb R^2)]^2 \to L^2[B_0] $  satisfying:
\begin{itemize}
\item for arbitrary $W \in L^2[B_0]$ there holds:
$$
(\tilde{\mathcal A} + \varepsilon)^{-\mu} W  = (\tilde{\mathcal A}_0+ \varepsilon)^{-\mu}(\mathds{1}_{\mathcal F_0} W) + R_{\mu,\varepsilon}W 
$$
\item there exists a constant $C := C(\mu) >0$ depending on $\mu$ but independent of $\varepsilon >0$ and $W \in L^2[B_0] \cap [L^{q}(\mathbb R^2)]^2$ such that :
\begin{equation} \label{eq_controlfrac}
\|R_{\mu,\varepsilon} W\|_{L^2(\mathbb R^2)} \leq C \|W\|_{L^q(\mathbb R^2)}.
\end{equation}
\end{itemize}
\end{prop}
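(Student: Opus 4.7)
The plan is to construct $R_{\mu,\varepsilon}$ explicitly via the Balakrishnan representation of negative fractional powers of positive self-adjoint operators: for $\mu\in(0,1)$,
\begin{equation*}
(\tilde{\mathcal A}+\varepsilon)^{-\mu} = \frac{\sin(\pi\mu)}{\pi}\int_0^\infty \lambda^{-\mu}(\tilde{\mathcal A}+\varepsilon+\lambda)^{-1}\, d\lambda,
\end{equation*}
and likewise for $\tilde{\mathcal A}_0$. Subtracting the two identities, one obtains that $R_{\mu,\varepsilon}W$ is given by the integral against $\lambda^{-\mu}$ of the resolvent difference
\begin{equation*}
D_\lambda := (\tilde{\mathcal A}+\varepsilon+\lambda)^{-1}W - (\tilde{\mathcal A}_0+\varepsilon+\lambda)^{-1}(\mathds{1}_{\mathcal F_0}W).
\end{equation*}
The whole proof then reduces to an $L^2$ bound on $D_\lambda$ with enough decay in $\lambda$ to integrate against $\lambda^{-\mu}$ both near $0$ and near $\infty$.

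The first step is to identify $D_\lambda$ as a \emph{homogeneous} problem. Setting $V_\lambda = (\tilde{\mathcal A}+\varepsilon+\lambda)^{-1}W$ and $V_\lambda^0 = (\tilde{\mathcal A}_0+\varepsilon+\lambda)^{-1}(\mathds{1}_{\mathcal F_0}W)$, both functions satisfy $-\Delta u+(\varepsilon+\lambda)u = W$ in $\mathcal F_0$ and differ only through their boundary data: $V_\lambda^0 = 0$ on $\partial B_0$, whereas $V_\lambda|_{\partial B_0} = \ell_{V_\lambda} + \omega_{V_\lambda}z^{\bot}$. Therefore $D_\lambda|_{\mathcal F_0}$ is the unique decaying solution to the homogeneous Helmholtz problem $(-\Delta+\varepsilon+\lambda)D_\lambda = 0$ in $\mathcal F_0$ with the rigid-motion trace on $\partial B_0$, while on $B_0$ itself $D_\lambda = \ell_{V_\lambda}+\omega_{V_\lambda}y^\bot$. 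A standard energy estimate (multiplying by $D_\lambda$ and integrating by parts in $\mathcal F_0$, controlling the normal derivative via a layer-type bound for the Helmholtz kernel in terms of the boundary datum) yields
\begin{equation*}
\|D_\lambda\|_{L^2(\mathbb R^2)} \leq C\,(\varepsilon+\lambda)^{-\alpha}\bigl(|\ell_{V_\lambda}|+|\omega_{V_\lambda}|\bigr)
\end{equation*}
for an explicit $\alpha>0$. The $B_0$-contribution, being a rigid motion on a bounded set, is just bounded by $|\ell_{V_\lambda}|+|\omega_{V_\lambda}|$, and behaves at worst like the $\mathcal F_0$-contribution once combined with the $\lambda^{-\mu}$ weight.

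The second step is to control $|\ell_{V_\lambda}|$ and $|\omega_{V_\lambda}|$ by $\|W\|_{L^q}$ with sharp decay in $\lambda$. This is done by writing the resolvent as a Laplace transform of the semigroup,
\begin{equation*}
(\tilde{\mathcal A}+\varepsilon+\lambda)^{-1}W = \int_0^\infty e^{-(\varepsilon+\lambda)t}\, e^{-t\tilde{\mathcal A}}W\, dt,
\end{equation*}
and inserting the semigroup bounds of Lemma~\ref{lem:est_semigroup} (or, better, the divergence-form bound on $|\ell|$ in Lemma~\ref{lem:duality_est}, applied after writing $W$ as $\mathds{1}_{\mathcal F_0}W$ plus the rigid part). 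This gives a bound of the form
\begin{equation*}
|\ell_{V_\lambda}|+|\omega_{V_\lambda}| \leq C\,(\varepsilon+\lambda)^{-\beta}\|W\|_{L^q(\mathbb R^2)}
\end{equation*}
with an exponent $\beta$ satisfying $\alpha+\beta$ slightly greater than $1-\mu$ whenever $\mu<1/q-1/2$.

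Combining the two estimates and applying Fubini yields
\begin{equation*}
\|R_{\mu,\varepsilon}W\|_{L^2} \leq C\|W\|_{L^q}\int_0^\infty \lambda^{-\mu}(\varepsilon+\lambda)^{-(\alpha+\beta)}\,d\lambda,
\end{equation*}
and the integral is finite uniformly in $\varepsilon>0$ precisely because $\mu<1$ (integrability at $0$) and $\mu+\alpha+\beta>1$ (integrability at $\infty$). The main obstacle is the sharpness of the two exponents $\alpha,\beta$: the Stokes-paradox obstruction in $2D$ discussed earlier prevents controlling $|\ell_V|$ by the $\dot H^1$-norm alone, which both forces the strict inequality $\mu<1/q-1/2$ (instead of the equality available in \cite{KozonoOgawa-decay} for the Dirichlet-Stokes operator) and explains why the mixed $L^q+L^2$ assumption on $v$ in Lemma~\ref{lem:fractional} is unavoidable in the present setting.
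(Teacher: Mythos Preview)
Your overall architecture --- Balakrishnan representation plus a resolvent-difference formula --- matches the paper's, and your identification of $D_\lambda$ as the decaying Helmholtz lift of the rigid trace $\ell_{V_\lambda}+\omega_{V_\lambda}z^\bot$ is correct. But the two steps you call ``standard'' are precisely where the proof lives, and as written they do not go through.

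First, the semigroup bounds you invoke in Step~2 (Lemmas~\ref{lem:est_semigroup} and~\ref{lem:duality_est}) are established for the fluid--structure operator $A=\mathbb P\tilde{\mathcal A}$ acting on divergence-free fields, not for $\tilde{\mathcal A}$ on $L^2[B_0]$. You cannot use them to estimate $e^{-t\tilde{\mathcal A}}W$; you would first have to prove analogous $L^q$--$L^p$ decay for the non-solenoidal semigroup, which is a separate task. The paper avoids this entirely: instead of going through the semigroup, it controls the boundary quantities by a direct duality argument, multiplying the resolvent equation against explicit Bessel-function test fields $K_0(\sqrt\lambda|x|)F$ and $K_1(\sqrt\lambda|x|)\,x^\bot/|x|$ and integrating by parts. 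This yields the sharp $\lambda$-dependence without any semigroup input.

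Second, in Step~1 you assert a bound $\|D_\lambda\|_{L^2}\le C(\varepsilon+\lambda)^{-\alpha}(|\ell_{V_\lambda}|+|\omega_{V_\lambda}|)$ ``for an explicit $\alpha>0$'' via an energy estimate. In two dimensions this is exactly where the analysis is delicate: the $L^2$ norm of the Helmholtz lift of a constant trace behaves like $(\lambda^{1/2}|\ln\lambda|)^{-1}$ as $\lambda\to 0$ (reflecting the $K_0(r)\sim-\ln r$ asymptotics), while $|\ell_{V_\lambda}|$ itself picks up a compensating $|\ln\lambda|$ factor relative to the data. A generic energy or layer-potential bound will not see these logarithms, and without them the integral $\int_0^1\lambda^{-\mu}(\ldots)\,d\lambda$ can fail to converge. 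The paper handles this by writing $D_\lambda$ explicitly in terms of the profiles $\phi_\lambda,\psi_\lambda$ built from $K_0,K_1$, computing their $L^2$ norms from the Bessel asymptotics, and checking by hand that the resulting integrand is $O(\lambda^{\mu_{\rm crit}-\mu-1}|\ln\lambda|^{-1})$ near zero --- integrable exactly because $\mu<\mu_{\rm crit}=1/q-1/2$. Your sketch asserts this integrability (``$\alpha+\beta$ slightly greater than $1-\mu$'') without ever computing either exponent; that assertion \emph{is} the proposition.
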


We postpone the proof of this proposition to Appendix \ref{app:fractional}.
For completeness, we provide a proof of {\bf Lemma \ref{lem:fractional}} with this proposition at-hand.

\begin{proof}[Proof of Lemma \ref{lem:fractional}]
The proof follows a standard regularization-compactness scheme.
Let $\mu \in (0,1/2)$ and  { $q \in (1,2)$ such that $\mu < 1/q-1/2$.} Given $W \in \mathcal L^2 \cap [L^q(\mathbb R^2)]^2$  and $\varepsilon \in (0,\infty)$ we can construct $(A+\varepsilon)^{-\mu} W.$ Formula \eqref{eq:sqrt2} with a Heinz-Kato argument imply then that 
$$
\| (A+\varepsilon)^{-\mu} W \|_{\mathcal L^2} \leq \|(\tilde{\mathcal A} + \varepsilon)^{-\mu} W\|_{L^2[B_0]}.
$$
However, we have that:
$$
(\tilde{\mathcal A} + \varepsilon)^{-\mu} W = (\tilde{\mathcal A}_0 + \varepsilon)^{-\mu} (\mathds {1}_{\mathcal F_0} W) + R_{\mu,\varepsilon} W
$$
%{\color{blue}
For the first term, according to \cite[Eq. (2.2)]{KozonoOgawa-decay}  {(that holds componentwise in our setting)} and a Hardy-Littlewood-Sobolev inequality, there holds:
$$
\|(\tilde{\mathcal A}_0 + \varepsilon)^{-\mu} (\mathds {1}_{\mathcal F_0} W)\|_{L^2} \leq C \|W\|_{L^{q'}(\mathcal F_0)}
$$
where $1/q' = \mu + 1/2.$ We have then $q' \in (q,2)$ so that, by interpolation, we derive:
$$
\|(\tilde{\mathcal A}_0 + \varepsilon)^{-\mu} (\mathds {1}_{\mathcal F_0} W)\|_{L^2} \leq C ( \|W\|_{L^q(\mathcal F_0)} + \|W\|_{L^2(\mathcal F_0)})
$$
As for the other part, applying the previous proposition, we conclude that:
$$
\| R_{\mu,\varepsilon} W\|_{L^2} \leq C(\mu) \|W\|_{L^q}.
$$
%}
Letting $\varepsilon \to 0,$ we have thus that $(A+\varepsilon)^{-\mu} W$ converges to some $V$ (in $\mathcal L^2$) that satisfies $A^{\mu}V = W$ with the expected control { $\|V\|_{\mathcal L^2} \leq C( \|W\|_{L^q} + \|W\|_{L^2(\mathcal F_0)})$}. 
\end{proof}

%\newpage

\section{Stability of the Oseen vortex} \label{sec:Cauchy}

In this section, we construct global-in-time solutions to \eqref{eq:init_prob1}-\eqref{eq:init_prob7} for arbitrary {$W_0 \in \mathcal L^2$} and analyze the long-time behavior for small perturbations of fully-developed Oseen vortex. 

\medskip

To this end, we have first the following useful estimates in the same spirit as Lemma 2.1 of \cite{Gallay_Maekawa__NS_obstacle} (so that we do not detail the proof):

\begin{lem} \label{lem:est_theta}
    \begin{enumerate}
    \item For any $p \in (2, \infty]$, there exists a constant $a_p > 0$ such that for all $t \geq 0$
    \begin{equation} \label{eq:est_theta1}
        \norm{\Theta (t)}_{L^p} \leq \frac{a_p}{(1 + t)^{\frac{1}{2} - \frac{1}{p}}}.
    \end{equation}

    \item For any $p \in (1, \infty]$, there exists $b_p > 0$ such that for all $t \geq 0$
    \begin{equation*}
        \norm{\nabla \Theta (t)}_{L^p} \leq \frac{b_p}{(1 + t)^{1 - \frac{1}{p}}}.
    \end{equation*}

    \item For all $t, s \geq 0$, we have
    \begin{equation} \label{eq:est_theta_l2}
        \norm{\Theta (t) - \Theta (s)}_{L^2}^2 \leq \frac{1}{4 \pi} \abs{\log \frac{1 + t}{1 + s}}.
    \end{equation}

    \item There exists a constant $\kappa_1 > 0$ such that for all $t, s \geq 0$,
    \begin{equation*}
        \norm{\nabla \Theta (t) - \nabla \Theta (s)}_{L^2}^2 \leq \kappa_1 \abs{\frac{1}{1+ t} - \frac{1}{1 + s}}.
    \end{equation*}
    \end{enumerate}
\end{lem}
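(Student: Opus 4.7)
The plan is to exploit the self-similar structure of the Oseen vortex. Introducing the stationary profile $\Theta_0(y) := \tfrac{1}{2\pi}\tfrac{y^\perp}{|y|^2}(1-e^{-|y|^2/4})$, a direct computation gives
\begin{equation*}
\Theta(t,x) = \frac{1}{\sqrt{1+t}}\,\Theta_0\!\left(\tfrac{x}{\sqrt{1+t}}\right), \qquad \nabla\Theta(t,x) = \frac{1}{1+t}\,(\nabla\Theta_0)\!\left(\tfrac{x}{\sqrt{1+t}}\right).
\end{equation*}
Items (i) and (ii) then follow at once from the change of variables $y = x/\sqrt{1+t}$, provided $\Theta_0 \in L^p$ for $p \in (2,\infty]$ and $\nabla\Theta_0 \in L^p$ for $p \in (1,\infty]$. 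This is verified by inspection: both $\Theta_0$ and $\nabla\Theta_0$ are smooth on $\mathbb{R}^2$ (the singularity of $y^\perp/|y|^2$ at the origin is cancelled by the factor $1-e^{-|y|^2/4}$), while at infinity $|\Theta_0(y)| = \mathcal{O}(1/|y|)$ and $|\nabla\Theta_0(y)| = \mathcal{O}(1/|y|^2)$.

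For (iii) and (iv), I would exploit that $\Theta$ is the 2D Biot--Savart image of the Gaussian vorticity $\omega_\Theta(t,x) = \tfrac{1}{4\pi(1+t)}e^{-|x|^2/(4(1+t))}$. The difference $\Theta(t)-\Theta(s)$ is in $L^2$ (the two leading $1/|x|$ tails cancel) and its Fourier transform reads
\begin{equation*}
\widehat{\Theta(t)-\Theta(s)}(\xi) = -\,\frac{i\xi^\perp}{|\xi|^2}\Bigl(e^{-(1+t)|\xi|^2} - e^{-(1+s)|\xi|^2}\Bigr).
\end{equation*}
By Plancherel, polar coordinates, and the substitution $u = |\xi|^2$, the proof of (iii) reduces to
\begin{equation*}
\|\Theta(t)-\Theta(s)\|_{L^2}^2 = \frac{1}{4\pi}\int_0^\infty \frac{(e^{-(1+t)u} - e^{-(1+s)u})^2}{u}\,du.
\end{equation*}
Expanding the square and applying Frullani's identity $\int_0^\infty \tfrac{e^{-\alpha u}-e^{-\beta u}}{u}\,du = \log(\beta/\alpha)$ twice yields exactly $\tfrac{1}{4\pi}\log\tfrac{((1+t)+(1+s))^2}{4(1+t)(1+s)}$. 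The elementary inequality $a+b \leq 2\max(a,b)$, equivalent to $(a+b)^2/(4ab) \leq \max(a,b)/\min(a,b)$, then upgrades this to the claimed $\tfrac{1}{4\pi}|\log\tfrac{1+t}{1+s}|$.

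For (iv), the same Fourier framework applies with one additional factor of $\xi_k$ per derivative. Since the symbols $\xi_k\xi_j^\perp/|\xi|^2$ are uniformly bounded, Plancherel and polar coordinates reduce the task to controlling the explicit integral
\begin{equation*}
\int_0^\infty \bigl(e^{-(1+t)u} - e^{-(1+s)u}\bigr)^2 du = \frac{((1+t)-(1+s))^2}{2(1+t)(1+s)((1+t)+(1+s))}.
\end{equation*}
Using $|a-b| \leq a+b$ for positive $a,b$, the right-hand side is bounded by $\tfrac{1}{2}\bigl|\tfrac{1}{1+t}-\tfrac{1}{1+s}\bigr|$, delivering (iv) up to a dimensional constant. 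No step presents a serious obstacle: the only points requiring care are the bookkeeping of the matrix structure of $\nabla\Theta$ in Fourier variables (harmless because the Riesz-type symbols are bounded) and the tracking of the sharp constant $\tfrac{1}{4\pi}$ in (iii), which is precisely where the Fourier approach is cleanest.
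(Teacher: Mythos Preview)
Your argument is correct. The paper does not actually give a proof of this lemma: it simply records that the estimates are ``in the same spirit as Lemma~2.1 of \cite{Gallay_Maekawa__NS_obstacle}'' and omits the details. Your self-similar scaling for (i)--(ii) is exactly the natural route. For (iii)--(iv), your Fourier/Plancherel computation is a clean way to obtain the sharp constant $\tfrac{1}{4\pi}$; an equally direct alternative is to compute in physical space, since $|\Theta(t,x)-\Theta(s,x)|$ is a radial function of $|x|$ and polar coordinates plus the substitution $v=|x|^2/4$ lead to the very same Frullani integral. One small point worth making explicit: $\Theta(t)$ itself is not in $L^2$, so the symbol $-i\xi^\perp|\xi|^{-2}e^{-(1+t)|\xi|^2}$ should be read as the Fourier transform of the \emph{difference} $\Theta(t)-\Theta(s)$ (which is in $L^2$, as you note); any potential singular contribution at $\xi=0$ cancels in the subtraction, so the Plancherel step is fully justified.
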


We recall then that, contrary to \cite{Gallay_Maekawa__NS_obstacle}, we don't need to use a cut-off function. Indeed, the boundary conditions are here more suitable than the no-slip boundary condition of \cite{Gallay_Maekawa__NS_obstacle} for the Oseen vortex, since $\Theta$ is a pure rotation on $\partial B_0:$ $\Theta (t, x) = g (t,1) \, x^\perp$ on $\partial B_0$.
From this remark and the construction of the pressure $\Pi$ in the introduction, we obtain that, when plugging the ansatz \eqref{eq:ansatz} into
\eqref{eq:NS_mod1}-\eqref{eq:NS_mod6}, we may have a remainder term in the Newton laws only. Furthermore, we have the following proposition:

\begin{prop}
    For all $t \geq 0$, there exists $C > 0$ such that for all $t \geq 0$
    \begin{equation*}
        \abs{\int_{\partial B_0} x^\perp \cdot {\Sigma (\Theta (t),\Pi(t))} n \diff \sigma (x)} + \abs{\partial_t g (t, 1)} \leq C \frac{1}{(1 + t)^2}.
    \end{equation*}
    There also holds for all $t \geq 0$
    \begin{equation*}
        \int_{\partial B_0} { \Sigma (\Theta (t),\Pi(t))} n \diff \sigma (x) = 0.
    \end{equation*}
\end{prop}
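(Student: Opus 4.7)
The plan is to reduce both claims to explicit computations that exploit the radial structure $\Theta(t,x) = g(t,\abs{x}^2)\, x^\perp$ of the Lamb--Oseen vortex.

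First, for the vanishing of the force $\int_{\partial B_0} \Sigma(\Theta,\Pi) n\, \diff\sigma$, I will argue by parity. The identity $\Theta(t,-x) = -\Theta(t,x)$ is immediate from the formula for $\Theta$; hence $\nabla \Theta$, and therefore $D(\Theta)$, is \emph{even} as a matrix-valued function of $x$, while $|\Theta|^2$ is also even, which (together with the defining relation $\nabla \Pi = (x/\abs{x}^2)|\Theta|^2$ and the natural choice of integration constant) forces $\Pi$ to be even as well. Consequently $\Sigma(\Theta,\Pi)$ is an even matrix field on $\mathbb R^2\setminus\{0\}$, whereas on $\partial B_0$ the outward-to-fluid normal is simply $n(x) = x$, which is odd. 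The integrand is therefore odd in $x$ and integrates to zero over the symmetric set $\partial B_0$.

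Next, for the angular moment, the fact that $n(x)=x$ on $\partial B_0$ together with $x\cdot x^\perp = 0$ kills the pressure contribution, leaving
\[
\int_{\partial B_0} x^\perp \cdot \Sigma(\Theta,\Pi) n\, \diff\sigma = 2\int_{\partial B_0} x^\perp \cdot D(\Theta) x\, \diff\sigma.
\]
A direct coordinate computation of $D(\Theta)$ from $\Theta = g(t,\abs{x}^2) x^\perp$ (writing out $\partial_j\Theta_i$ and symmetrizing) leads to the compact identity
\[
D(\Theta)(x)\, x = g_r(t,\abs{x}^2)\, \abs{x}^2\, x^\perp,
\]
where $g_r$ denotes the derivative of $g$ with respect to its second argument. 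On $\partial B_0$ this gives $x^\perp \cdot D(\Theta)x = g_r(t,1)$, so the moment equals $4\pi\, g_r(t,1)$. The problem is thereby reduced to estimating the two scalar quantities $\partial_t g(t,1)$ and $g_r(t,1)$.

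The time derivative is immediate: differentiating $g(t,1)$ in $t$ produces $\partial_t g(t,1) = \frac{1}{8\pi(1+t)^2}\, e^{-1/(4(1+t))}$, which is manifestly $O((1+t)^{-2})$. The spatial derivative requires more care, and this is the only genuine subtlety in the proof: naively, $g_r(t,1)$ involves a term of size $(1+t)^{-1}$, and one must exhibit a cancellation between the two terms coming from the quotient rule. Setting $s := 1/(4(1+t))$ and expanding $1-e^{-s} = s - s^2/2 + O(s^3)$, one sees that the leading $O(s)$ terms in
\[
g_r(t,1) = \frac{1}{2\pi}\left[\frac{s\, e^{-s}}{1} - (1-e^{-s})\right]
\]
cancel, so $g_r(t,1) = O(s^2) = O((1+t)^{-2})$. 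Combining these two decay estimates with the explicit formulas above yields the announced bound, completing the proof.
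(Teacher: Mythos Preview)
Your proof is correct. The paper actually states this proposition without proof, treating it as a routine computation; your argument supplies precisely that computation.

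Two cosmetic points that do not affect the conclusions. First, with the paper's convention (normal directed outward from the fluid domain, hence into $B_0$), one has $n(x)=-x$ on $\partial B_0$, not $n(x)=x$. Your parity argument is insensitive to this sign, and the torque computation only changes by an overall sign which disappears under the absolute value. Second, your formula for $\partial_t g(t,1)$ is off by a sign: differentiating $g(t,1)=\tfrac{1}{2\pi}(1-e^{-1/(4(1+t))})$ gives $\partial_t g(t,1)=-\tfrac{1}{8\pi(1+t)^2}e^{-1/(4(1+t))}$. Again this is irrelevant for the bound on $|\partial_t g(t,1)|$.

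The substantive content of your argument --- the parity reduction for the force, the identity $D(\Theta)x = g_r(t,|x|^2)\,|x|^2\,x^\perp$, and especially the observation that the two $O(s)$ terms in $g_r(t,1)=\tfrac{1}{2\pi}[s e^{-s}-(1-e^{-s})]$ cancel to leave an $O(s^2)=O((1+t)^{-2})$ remainder --- is correct and is exactly what the proposition requires.
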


In particular, we see that there is actually no remainder in the Newton law for the linear momentum. But there is one in the Newton law on the angular momentum:
\begin{equation*}
    \zeta (t) \coloneqq - \int_{\partial B_0} x^\perp \cdot{ \Sigma (\Theta(t),\Pi(t))} n \diff \sigma (x) - \mathcal{J} \partial_t g (t, 1).
\end{equation*}
The previous result yields the following estimate for this remainder.
\begin{cor} \label{cor:reminder_est}
    There exists $C > 0$ such that for all $t \geq 0$,
    \begin{equation*}
        \abs{\zeta (t)} \leq \frac{C}{(1 + t)^2}.
    \end{equation*}
\end{cor}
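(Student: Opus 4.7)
The statement is an immediate consequence of the preceding proposition, so the plan is essentially one line of triangle inequality. By definition,
\begin{equation*}
\zeta(t) = -\int_{\partial B_0} x^{\perp} \cdot \Sigma(\Theta(t),\Pi(t)) n \, \diff \sigma(x) - \mathcal{J}\, \partial_t g(t,1),
\end{equation*}
so the triangle inequality gives
\begin{equation*}
|\zeta(t)| \leq \left| \int_{\partial B_0} x^{\perp} \cdot \Sigma(\Theta(t),\Pi(t)) n \, \diff\sigma(x) \right| + \mathcal{J}\,|\partial_t g(t,1)|,
\end{equation*}
and each of these two terms is controlled by $C/(1+t)^2$ by the preceding proposition. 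Summing (and absorbing $\mathcal{J}$ and the two constants into a single $C$) yields the result.

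Since there is no nontrivial step, the only mild care is to check that the previous proposition really does provide both bounds in the form needed: one term for the angular torque of the Lamb--Oseen stress tensor on $\partial B_0$, and one for the time derivative of $g(\cdot,1)$. Both appear verbatim in the preceding proposition, so nothing else has to be done. The conceptual content sits entirely in the preceding proposition (which requires explicit differentiation of $g(t,r) = (1-e^{-r/(4(1+t))})/(2\pi r)$ at $r=1$, giving a factor $1/(1+t)^2$, together with the explicit form of $\Sigma(\Theta,\Pi)$ on $\partial B_0$ whose rotational component decays at the same rate); the corollary is only a repackaging.
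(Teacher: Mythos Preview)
Your proposal is correct and matches the paper's approach exactly: the paper simply states that ``the previous result yields the following estimate for this remainder'' without further argument, and your triangle-inequality unpacking is precisely what is meant.
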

Eventually, going to capital-letter unknowns, we obtain with similar arguments 
as in \cite{tucsnaktakahashi2004} that we have a solution $(w,\ell_{w},\omega)$
to \eqref{eq:init_prob1}-\eqref{eq:init_prob7} if the associated $W$ satisfies
\eqref{eq:Duhamel} with 
\begin{multline*}
F_{\alpha}(s) =  { \alpha}\zeta(s) x^\perp \frac{\mathds{1}_{B_0}}{\mathcal{J}} -  { \alpha}\mathbb{P} \Bigl[ ((\Theta (s) \cdot \nabla) w (s) + (w (s) \cdot \nabla) \Theta (s) - (\ell_{W (s)} \cdot \nabla) \Theta (s)) \mathds{1}_{\mathcal F_0} \Bigr]  \\
      -  \mathbb P\Bigl[((w (s) - \ell_{W (s)}) \cdot \nabla) w (s)\mathds{1}_{\mathcal F_0}\Bigr] .
\end{multline*}

%\begin{rem}
%    Since $\divg w = 0$ and $\ell_w$ is a constant in space, we also have $\divg ( (\ell_{W (s)} \cdot \nabla) w(s) ) = 0$, so that $\mathbb{P} ( (\ell_{W (s)} \cdot \nabla) w(s) ) = (\ell_{W (s)} \cdot \nabla) w(s)$, and the same thing for $(\ell_{W (s)} \cdot \nabla) \Theta(s)$.
%\end{rem}

We proceed with the proof of {\bf Theorem \ref{thm_main1}} and {\bf Theorem \ref{thm:main2}}. We first study in the next subsection the Duhamel formula \eqref{eq:Duhamel}. { The analysis applies to the two cases.  Either we  start from a sufficiently large $t_0$ for small $\mathcal L^2$ data and we obtain {\bf Theorem \ref{thm:main2}}, or we do not restrict the size of initial data and obtain existence of a solution on a small time-interval.  This result is then complemented in the last subsection with an a priori  estimate to yield {\bf Theorem \ref{thm_main1}.}}

\subsection{Proof of {Theorem \ref{thm:main2}}}

The main result of this part is the following theorem:

\begin{theorem} \label{th:cauchy}
 \label{item:cauchy_th2} { Let $t_0 \geq 0$ and $W_0 \in \mathcal L^2.$ The two following items hold true:
\begin{itemize}
\item[i)] There exists $T>0$ such that the solution $W(t)$ of \eqref{eq:Duhamel} exists on $[t_0,t_0+T].$ Furthermore,  any upper bound on 
$|\alpha| + \|W_0\|_{\mathcal H^1}$ yields a lower bound on $T.$ 

\item[ii)]
 There exists positive constants $K_0$, $\delta_0$, $K_6$ and $T_0$ such that, if $t_0 \geq T_0$, if $\abs{\alpha} \leq \delta_0$, and if $\norm{W_0}_{\mathcal{L}^2} \leq K_6$, then the solution $W(t)$ of \eqref{eq:Duhamel} is global in time and satisfies
    \begin{equation*}
        \sup_{t \geq t_0} \norm{W (t)}_{\mathcal{L}^2} + \sup_{t > t_0} (t - t_0)^\frac{1}{2} (\norm{\nabla w (t)}_{L^2 (\mathcal{F}_0)} + \abs{\ell_{W (t)}}) \leq K_0 ( \norm{W_0}_{\mathcal{L}^2} + \abs{\alpha} (1 + t_0)^{- \frac{5}{4}} ).
    \end{equation*}
    In addition, if
    \begin{equation} \label{eq:assump_mu}
        M \coloneqq \sup_{\tau \geq 0} \tau^\mu \norm{S (\tau) W_0}_{\mathcal{L}^2} + \sup_{\tau > 0} \tau^{\mu + \frac{1}{2}} \Bigl( \norm{\nabla S (\tau) W_0}_{L^2 (\mathcal{F}_0)} + \abs{\ell_{S (\tau) W_0}} \Bigr) < \infty,
    \end{equation}
    for a fixed $\mu \in (0, \frac{1}{2})$, then
    \begin{equation*}
        \sup_{t \geq t_0} (t - t_0)^\mu \norm{W (t)}_{\mathcal{L}^2} + \sup_{t > t_0} (t - t_0)^{\mu + \frac{1}{2}} (\norm{\nabla w (t)}_{L^2 (\mathcal{F}_0)} + \abs{\ell_{W (t)}}) \leq 2M + C \abs{\alpha}
    \end{equation*}
    for some $C > 0$.
    \end{itemize}
    }
\end{theorem}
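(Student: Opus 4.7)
The plan is to run a Kato--Weissler type fixed-point argument on the Duhamel formula \eqref{eq:Duhamel}. I would work in the Banach space $X_{t_0,T}$ of continuous functions $W : [t_0,T] \to \mathcal{L}^2$ whose fluid restriction lies in $L^{\infty}_{\mathrm{loc}}((t_0,T];H^1(\mathcal{F}_0))$, equipped with
\begin{equation*}
\|W\|_{X} = \sup_{t \in [t_0,T]} \|W(t)\|_{\mathcal{L}^2} + \sup_{t \in (t_0,T]} (t-t_0)^{1/2}\bigl( \|\nabla w(t)\|_{L^2(\mathcal{F}_0)} + |\ell_{W(t)}| \bigr).
\end{equation*}
Setting $\Psi(W)(t)$ equal to the right-hand side of \eqref{eq:Duhamel}, the task reduces to showing that $\Psi$ sends a suitably chosen ball $B_R \subset X$ into itself and acts as a contraction there.

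For the free evolution $S(t-t_0)W_0$, the $\mathcal{L}^2$-bound is the semigroup contraction, the gradient bound is the second item of Lemma \ref{lem:est_semigroup}, and the solid-velocity bound is obtained by observing that $|\ell_{S(t-t_0)W_0}| \lesssim \|S(t-t_0)W_0\|_{\mathcal{L}^{\infty}} \lesssim (t-t_0)^{-1/2}\|W_0\|_{\mathcal{L}^2}$, via the first item of Lemma \ref{lem:est_semigroup} applied with $(p,q) = (\infty,2)$. The source $\alpha \zeta(s) x^{\perp} \mathds{1}_{B_0}/\mathcal{J}$ has $(1+s)^{-2}$ decay thanks to Corollary \ref{cor:reminder_est} and contributes an $O(|\alpha|(1+t_0)^{-5/4})$ term after convolution against the semi-group. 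The $\Theta$-induced terms $\alpha[(\Theta \cdot \nabla)w + (w \cdot \nabla)\Theta - (\ell_W \cdot \nabla)\Theta]$ are first rewritten as divergences of tensors using $\divg \Theta = \divg w = 0$, then estimated through the duality estimates of Lemma \ref{lem:duality_est}, with Lemma \ref{lem:est_theta} providing a $(1+t_0)^{1/p-1/2}$ smallness factor for large $t_0$. The purely nonlinear piece $((w-\ell_W) \cdot \nabla)w$ is rewritten as $\divg((w-\ell_W) \otimes w)$ and handled similarly by Lemma \ref{lem:duality_est}, the product $\|w \otimes w\|_{L^q}$ being controlled through Gagliardo--Nirenberg and $\|\ell_W \otimes w\|_{L^q}$ through Corollary \ref{cor:est_l_GN}.

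The main obstacle throughout is the $\ell_W \otimes w$ contribution: due to the Stokes paradox in 2D, one cannot bound $|\ell_W|$ by $\|\nabla w\|_{L^2(\mathcal{F}_0)}$ alone, so the refined Gagliardo--Nirenberg of Corollary \ref{cor:est_l_GN} must be applied with its $\sqrt{p}$-dependent constant carefully tracked in the choice of integrability exponent used within Lemma \ref{lem:duality_est}. For item i), a short time $T-t_0$ allows the beta-function integrals $\int_{t_0}^{t} (t-s)^{-a}(s-t_0)^{-b}\,\mathrm{d}s$ to absorb the quadratic and $|\alpha|$-dependent contributions; the resulting lower bound on $T$ depends only on upper bounds for $\|W_0\|_{\mathcal{L}^2}$ and $|\alpha|$. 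For item ii), I would pick $R \sim \|W_0\|_{\mathcal{L}^2} + |\alpha|(1+t_0)^{-5/4}$ and exploit the smallness of $|\alpha|$, $\|W_0\|_{\mathcal{L}^2}$, together with the $(1+t_0)^{1/p-1/2}$ factor, to close the fixed-point argument globally in time. For the $\mu$-refinement, I would augment $\|\cdot\|_X$ with the $\mu$-weighted suprema of \eqref{eq:assump_mu} and bootstrap, the key input being that $\int_{t_0}^{t}(t-s)^{-a}(s-t_0)^{-\mu-b}\,\mathrm{d}s$ produces the required $(t-t_0)^{-\mu}$ weight whenever $a+b \leq 1$.
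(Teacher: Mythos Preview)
Your overall strategy---a Fujita--Kato fixed-point in the space $X$ you describe---is the one the paper uses, and your handling of the linear evolution, the source $\zeta$, and the global-smallness argument for item~ii) is essentially on target. There is however a genuine gap in your treatment of item~i).

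The $X$-norm you write down is \emph{scaling-critical}: for the quadratic piece, the beta integral $\int_{t_0}^{t}(t-s)^{-1/2}(s-t_0)^{-1/2}\,\mathrm{d}s=\pi$ is independent of $t-t_0$, so the estimate $\|F_2W\|_X\le C\|W\|_X^2$ carries \emph{no} smallness from short time. Your claim that ``a short time $T-t_0$ allows the beta-function integrals to absorb the quadratic contributions'' therefore fails, and no lower bound on $T$ in terms of $\|W_0\|_{\mathcal L^2}$ alone can be produced this way---which is consistent with the statement of item~i), where the quantitative bound is in terms of $\|W_0\|_{\mathcal H^1}$, not $\|W_0\|_{\mathcal L^2}$. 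The paper's fix is to insert an auxiliary parameter $\delta\in(0,1]$ into the norm,
\[
\|W\|_X=\sup_t\|W(t)\|_{\mathcal L^2}+\frac{1}{\delta}\sup_t (t-t_0)^{1/2}\bigl(\|\nabla w(t)\|_{L^2(\mathcal F_0)}+|\ell_{W(t)}|\bigr),
\]
so that the bilinear bound becomes $\|F_2W\|_X\le C(\sqrt{\delta}+T^{1/2})\|W\|_X^2$. One first takes $\delta$ small to recover a contraction, then for general $W_0\in\mathcal L^2$ controls $\|S(\cdot-t_0)W_0\|_X$ by the standard Kato regularisation $W_0=W_0^\varepsilon+(W_0-W_0^\varepsilon)$ with $W_0^\varepsilon\in\mathcal H^1$; when $W_0\in\mathcal H^1$ the regularisation is unnecessary and $T$ depends only on $|\alpha|+\|W_0\|_{\mathcal H^1}$.

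Two smaller points of divergence. First, Corollary~\ref{cor:est_l_GN} plays no role in this proof: $|\ell_W|$ is already part of the $X$-norm and is bounded by $\delta(s-t_0)^{-1/2}\|W\|_X$, or crudely by $\|W\|_{\mathcal L^2}$ when one needs a $T^{1/2}$ gain; that corollary is used only in the logarithmic energy estimate of the next subsection. Second, for the term $(\Theta\cdot\nabla)w$ the paper does not use Lemma~\ref{lem:duality_est} directly but rather the identity of Corollary~\ref{cor_zecor} together with Lemma~\ref{lem:A_div_bounded}, which allows one to write $\|\nabla S(\tau)\mathbb P[(\Theta\cdot\nabla)w]\|_{L^2}=\|AS(\tau)A^{-1/2}\mathbb P\operatorname{div}(\mathds 1_{\mathcal F_0}\Theta\otimes w)\|_{\mathcal L^2}$ and then split the time integral; your divergence-form route can be made to work but requires the same near/far split and a switch to the non-divergence form near $s=t$ to avoid the non-integrable $(t-s)^{-1}$ singularity.
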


\begin{proof}
    The proof is very similar to the proof of Proposition 3.2 of \cite{Gallay_Maekawa__NS_obstacle}, who followed the classical { fixed-point} approach of Fujita and Kato \cite{Fujita_Kato__NS_initial_value_prob}.  
    { Below, we denote with $K$ a constant which depends only on the properties of the semi-group $S.$ This constant may vary between lines.}

    Given $t_0 \geq 0$ { and $T >0$, we introduce the Banach space $$
    X \coloneqq \mathcal{C}^0 ([t_0, t_0+T], \mathcal{L}^2) \cap \mathcal{C}^0 ((t_0, t_0+T), H^1 (\mathbb{R}^2) \cap L^\infty (B_0)),
    $$}equipped with the norm
    \begin{equation*}
        \norm{W}_X = \sup_{ t \in [t_0,t_0+T]} \norm{W (t)}_{\mathcal{L}^2} + { \dfrac{1}{\delta}} \sup_{ t \in  (t_0,t_0+T)} (t - t_0)^\frac{1}{2} (\norm{\nabla w (t)}_{L^2 (\mathcal{F}_0)} + \abs{\ell_{W (t)}}).
    \end{equation*}
    { with $\delta \in (0,1]$ to be fixed later on.}
    From Lemma \ref{lem:est_semigroup}, we know that { $W_H(t) := S(t-t_0) W_0$ satisfies:
    $$
    \|W_H(t)\|_{\mathcal L^2} \leq \|W_0\|_{\mathcal L^2}, \qquad 
   \sqrt{t-t_0} |\ell_{W_H(t)}| \leq  K \min(1,\sqrt{T}) \|W_0\|_{\mathcal L^2}\,,
   \qquad 
   \sqrt{t-t_0}\|\nabla w_H(t)\|_{L^2(\mathcal F_0 )} \leq K \|W_0\|_{\mathcal L^2}.
    $$
{Then, if $W_0 \in \mathcal H^1$, we have
\begin{equation*}
    \norm{\nabla w_H(t)}_{L^2(\mathcal F_0 )} \leq K \|W_0\|_{\mathcal H^1}.
\end{equation*}
On the other hand, if $W_0 \in \mathcal L^2$,} by classical regularizing arguments,  we can write $W_0 = W_0^{\varepsilon} + (W_0-W_{0}^{\varepsilon})$ where $W_0^{\varepsilon} \in \mathcal H^1$ and $(W_0- W_0^{\varepsilon})$ is arbitrary small in $\mathcal L^2.$ From the properties of $S,$ we have then that, for arbitrary $\varepsilon >0$ there exists $C_{\varepsilon}$ such that: 
    $$
    \sqrt{t-t_0} \norm{\nabla w_H(t)}_{L^2(\mathcal F_0 )} \leq   C_{\varepsilon}\sqrt{T} + K \varepsilon. 
    $$

{If we take $K \varepsilon = \delta^2$, we obtain that there exists a constant $K_{\delta}$ depending on $\delta$ for which:}
\begin{equation} \label{eq_boundWH}
\|W_H(t)\|_X \leq K \left ( 1 + \dfrac{ \min (1,\sqrt{T})}{\delta} \right) \|W_0\|_{\mathcal L^2} + \min \left(K \dfrac{ \|W_0\|_{\mathcal L^2}}{\delta} ,  K_{\delta} \sqrt{T}  + {\delta}, \frac{K \sqrt{T}}{\delta} \|W_0\|_{\mathcal H^1} \right).
\end{equation}
}
% We point out further that, in {\color{blue}the} case $W_0 \in \mathcal H^1$, we can choose $C_{\varepsilon}= K_{\delta}=K\|W_0\|_{\mathcal H^1}$ and $\varepsilon=0$ so that there is no $\delta$ in the last argument of the $\min$ in the latter upper bound.}
 
Then, given any $W \in X$, we denote for $t \geq t_0$:
    \begin{gather*}
        F_0 (t) = \int_{t_0}^t S(t-s) \mathbb{P} \Bigl[ \zeta(s) x^\perp \frac{\mathds{1}_{B_0}}{\mathcal{J}} \Bigr] \diff s, \\
        (F_1 W) (t) = \int_{t_0}^t S(t-s) \mathbb{P} \Bigl[\left(  (\Theta (s) \cdot \nabla) w (s) \right)\Bigr] \diff s, \\
        (F_2 W) (t) = \int_{t_0}^t S(t-s) \mathbb{P} \Bigl[ (w (s) - \ell_{W(s)})  \cdot \nabla  w (s) \Bigr] \diff s, \\
        (F_3 W) (t) = \int_{t_0}^t S(t-s) \mathbb P \Bigl[ (w(s) - \ell_{W (s)}) \cdot \nabla  \Theta (s) \Bigr] \diff s, \\
        (F W) (t) = \alpha F_0 (t) + \alpha (F_1 W) (t) + (F_2 W) (t) + \alpha (F_3 W) (t).
    \end{gather*}

    We show that $F$ maps $X$ into $X$ and that:
    \begin{align}
        \norm{F W}_X & \leq K \left( { \dfrac{|\alpha|}{\delta} \min\left( T^{3/4}  ,  \frac{1}{(1 + t_0)^\frac{5}{4}}\right)}  + \abs{\alpha} 
        { \left( \sqrt{\delta} + \dfrac{1}{\delta} \min (1, T^{\frac 14})\right)}\norm{W}_X \right)\notag \\
        & \quad + K { \min (1, \sqrt{\delta}+T^{\frac 12})}\norm{W}_X^2 , \label{eq:est_FW} \\
        \norm{F W_1 - F W_2}_X &  \leq K \left( \abs{\alpha}   { \left( \sqrt{\delta} + \dfrac{1}{\delta} \min (1, T^{\frac 14})\right)} +{ \min (1, \sqrt{\delta}+T^{\frac 12})}\left(  \norm{W_1}_X + \norm{W_2}_X\right)\right)  \dots \notag \\
        & \dots \norm{W_1 - W_2}_X. \label{eq:FW_lip}
    \end{align}
{    For this, we compute now  bounds successively for $F_0,$ $F_1,$ $F_2,$ and $F_3$.}
    First, using Corollary \ref{cor:reminder_est} and Lemma \ref{lem:est_semigroup} (with $q = \frac{4}{3}$), we get for all $t \geq t_0$
    \begin{align*}
        \norm{F_0 (t)}_{\mathcal{L}^2} + { \dfrac{(t - t_0)^\frac{1}{2}}{\delta}} (\norm{\nabla F_0 (t)}_{L^2 (\mathcal{F}_0)} + \norm{F_0 (t)}_{L^\infty}) & \leq C \int_{t_0}^t \Biggl( \frac{1}{(t-s)^\frac{1}{4}} + { \frac{(t - t_0)^\frac{1}{2}}{\delta(t - s)^\frac{3}{4}}} \Biggr) \frac{1}{(1 + s)^2} \diff s \\
        &          \leq { \dfrac{1}{\delta} \min\left( T^{3/4}  ,  \frac{1}{(1 + t_0)^\frac{5}{4}}\right).}
    \end{align*}
Then, we control $F_2$ with the help of Lemma \ref{lem:bilinear}  (see Appendix \ref{app:tech}) 
 which ensures that:
     \begin{equation*}
        \norm{F_2 (t)}_{\mathcal{L}^2} + { \dfrac{(t - t_0)^\frac{1}{2}}{\delta}} (\norm{\nabla F_2 (t)}_{L^2 (\mathcal{F}_0)} + \norm{F_2 (t)}_{L^\infty}) \leq {K\min (1, \sqrt{\delta}+T^{\frac 12})} \|W\|_{X}^2
    \end{equation*}    
Similarly, there holds
    \begin{align*}
        \norm{(w - \ell_{W (s)} \cdot \nabla) \Theta (s)}_{L^\frac{4}{3} (\mathcal{F}_0)} &  \leq
        \|w\|_{L^4(\mathcal F_0)} \|\nabla \Theta (s)\|_{L^2(\mathcal F_0)}
        +   
         \abs{\ell_{W (s)}} \norm{\nabla \Theta (s)}_{L^\frac{4}{3} (\mathcal{F}_0)}  \\
         & \leq C \left(\dfrac{{ \sqrt{\delta}}}{(s-t_0)^{\frac 14}} \dfrac{1}{(1+s)^{\frac 12}} + \frac{{\delta}}{(s - t_0)^\frac{1}{2}} \frac{1}{(1+s)^\frac{1}{4}}\right) \norm{W}_X
    \end{align*}
so that, applying the boundedness of $\mathbb P: L^{4/3}(\mathbb R^2) \to \mathcal L^{4/3}$ (see \cite[Remark 2.4]{WangXin2011}):
    \begin{multline*}
        \norm{F_3 W (t)}_{\mathcal{L}^2} + {\dfrac{(t - t_0)^\frac{1}{2}}{\delta}} (\norm{\nabla F_3 W (t)}_{L^2 (\mathcal{F}_0)} + \norm{F_3 W (t)}_{L^\infty}) \\
            \begin{aligned}
                &\leq C \int_{t_0}^t \Biggl( \frac{1}{(t-s)^\frac{1}{4}} + \frac{(t - t_0)^\frac{1}{2}}{{ \delta}(t - s)^\frac{3}{4}} \Biggr) \left( \dfrac{{ \sqrt{\delta}}}{(s-t_0)^{\frac 14}} \dfrac{1}{(1+s)^{\frac 12}} + \frac{{ {\delta}}}{(s - t_0)^\frac{1}{2}} \frac{1}{(1 + s)^\frac{1}{4}}\right)  \diff s \, \norm{W}_X \\
                &\leq C { \left( \sqrt{\delta} + \dfrac{1}{\sqrt{\delta}} \min\left( 1,  T^{\frac 14} \right)  \right) }\norm{W}_X.
            \end{aligned}
    \end{multline*}
We finally bound $F_1 W.$ To this end, the procedure is similar to that of \cite{Gallay_Maekawa__NS_obstacle}. First, we observe that $\Theta \cdot n = 0$ on $\partial B$
so that we can rewrite {(see {\bf Corollary \ref{cor_zecor}})}: 
    \begin{equation*}
      { S(\tau) \mathbb P[\mathbf{1}_{\mathcal F_0} (\Theta \cdot \nabla) w ] = A^\frac{1}{2} S(\tau) A^{- \frac{1}{2}} \mathbb{P}{\rm div}( \mathbf{1}_{\mathcal F_0}\Theta \otimes w ) \,, \qquad \forall \, \tau >0. }
    \end{equation*}
    Moreover, using Lemma \ref{lem:A_div_bounded} and the estimate \eqref{eq:est_theta1}, we compute
    \begin{equation*}
        \norm{A^{- \frac{1}{2}} \mathbb{P} \divg (\mathds{1}_{\mathcal F_0} \Theta \otimes w )}_{L^2 (\mathcal{F}_0)} \leq  \norm{\Theta (s) w (s)}_{L^2 (\mathcal{F}_0)} \leq \dfrac{C}{(1 + s)^{\frac{1}{2}}} \norm{W}_X.
    \end{equation*}
    The above remark with Lemma \ref{lem:est_semigroup} and \eqref{eq:link_A_nabla}  lead to:
    \begin{align*}
        \norm{(F_1 W) (t)}_{\mathcal{L}^2} &\leq \int_{t_0}^t (t-s)^{- \frac{1}{2}} \norm{A^{- \frac{1}{2}} \mathbb{P} \divg(  \mathds{1}_{\mathcal F_0} \Theta \otimes w  (s))}_{\mathcal{L}^2} \diff s \\
            &\leq C \int_{t_0}^t (t-s)^{- \frac{1}{2}} (1 + s)^{-\frac{1}{2}} \norm{W}_X \diff s \\
            & { \leq C \min(1, T^{\frac 12})\norm{W}_X},
    \end{align*}
    and
    \begin{align*}
      { \dfrac{ (t - t_0)^\frac{1}{2} }{\delta} }(\norm{\nabla (F_1 W) (t)}_{L^2 (\mathcal{F}_0)} + \abs{\ell_{F_3W (t)}}) &\leq
            \begin{multlined}[t][10cm]
          { \dfrac{1}{\delta}}      \int_{t_0}^{\frac{t + t_0}{2}} \frac{(t - t_0)^\frac{1}{2}}{t-s} \norm{A^{- \frac{1}{2}} \mathbb{P} \divg (  \mathds{1}_{\mathcal F_0} \Theta \otimes w ) (s)}_{\mathcal{L}^2} \diff s \\ 
                +   { \dfrac{1}{\delta}}   \int_{\frac{t + t_0}{2}}^t \frac{(t - t_0)^\frac{1}{2}}{(t-s)^\frac{1}{2}} \norm{(\Theta (s) \cdot \nabla) w (s) }_{L^2 (\mathcal{F}_0)} \diff s
            \end{multlined} \\
            &\leq 
            \begin{multlined}[t][10cm]
                C \biggl[    { \dfrac{1}{\delta}}   \int_{t_0}^{\frac{t + t_0}{2}} \frac{(t - t_0)^\frac{1}{2}}{t-s} (1 + s)^{-\frac{1}{2}} \diff s \, \norm{W}_X \\
                +    { \dfrac{1}{\delta}}   \int_{\frac{t + t_0}{2}}^t \frac{(t - t_0)^\frac{1}{2}}{(t-s)^\frac{1}{2}} \frac{1}{(1 + s)^\frac{1}{2} (s - t_0)^\frac{1}{2}} \diff s \, \norm{W}_X \biggr]
            \end{multlined} \\
            &\leq C{  \dfrac{1}{\delta} \min \left( 1 ,  T^{\frac 12}\right)} \norm{W}_X.
    \end{align*}
    Since $F W = \alpha F_0 + \alpha F_1 W + F_2 W + \alpha F_3 W$, this concludes the proof of \eqref{eq:est_FW}. The Lipschitz bound \eqref{eq:FW_lip} is established in the same way {from the fact that $F_1$ and $F_3$ are linear in $W$}.

\medskip    
       
{ We proceed with the proof of item $ii)$. For this, we fix $\delta=1$ so that \eqref{eq:est_FW} and \eqref{eq:FW_lip} entail:
    \begin{align*}
   &  \norm{W_H}_X \leq 2K \|W_0\|_{\mathcal L^2}\,, \\ 
    &    \norm{F W}_X \leq K \left(   \frac{|\alpha|}{(1 + t_0)^\frac{5}{4}} + \abs{\alpha}  \norm{W}_X + \norm{W}_X^2 \right),  \\   
   &     \norm{F W_1 - F W_2}_X \leq K \left( \abs{\alpha} + \norm{W_1}_X + \norm{W_2}_X \right) \norm{W_1 - W_2}_X. 
    \end{align*}
} 
Let $T = \infty$ and $r > 0$ such that $4 K r \leq 1$ and define $B_r \coloneqq \{ W \in X \, | \norm{W}_X \leq r \}$. If we assume that $4 \abs{\alpha} K \leq 1$, $4 K \norm{W_0}_{\mathcal{L}^2} \leq r$ and $4 K \abs{\alpha} (1 + t_0)^{-\frac{5}{4}} \leq r$, then { the previous estimates} imply that the map $W \mapsto S(t-t_0) W_0 + F W$ leaves the closed ball $B_r$ invariant and is a strict contraction in $B_r$. By construction, the unique fixed point of this map in $B_r$ is the desired solution of \eqref{eq:Duhamel}. This proves the existence part of Theorem \ref{th:cauchy} with
    \begin{equation*}
        K_0 = K, \qquad
       { \delta_0} = \frac{1}{4 K}, \qquad
        K_6 = \frac{1}{16 \, K^2}, \qquad
        T_0 = (4 K)^\frac{4}{5}.
    \end{equation*}

    In a second step, we assume that \eqref{eq:assump_mu} holds for some $\mu \in (0, \frac{1}{2})$. Given any $T > t_0$, we denote
    \begin{equation*}
        \mathcal{E}_T = \sup_{t_0 \leq t \leq T} (t - t_0)^\mu \norm{W (t)}_{\mathcal{L}^2} + \sup_{t_0 < t \leq T} (t - t_0)^{\mu + \frac{1}{2}} (\norm{\nabla w (t)}_{L^2 (\mathcal{F}_0)} + \abs{\ell_{W (t)}}),
    \end{equation*}
    where $W$ (also represented by the triplet $(w, \ell_W, \omega_W)$) is the solution of \eqref{eq:Duhamel} previously constructed. Since $W(t) = S(t-t_0) W_0 + (F W) (t)$, we have
    \begin{equation*}
        \mathcal{E}_T \leq M + \sup_{t_0 \leq t \leq T} (t - t_0)^\mu \norm{(F W) (t)}_{\mathcal{L}^2} + \sup_{t_0 < t \leq T} (t - t_0)^{\mu + \frac{1}{2}} (\norm{\nabla (F W) (t)}_{L^2 (\mathcal{F}_0)} + \abs{\ell_{(F W) (t)}}),
    \end{equation*}
    where $M$ is defined in \eqref{eq:assump_mu}. Let $p \in (1, 2)$ be such that $\frac{1}{p} > \mu + \frac{1}{2}$ and define $q \in (2, \infty)$ such that $\frac{1}{q} = \frac{1}{p} - \frac{1}{2}$. In particular, $\frac{1}{2} > \frac{1}{q} > \mu$. First, we have in a similar way as previously:
    \begin{multline*}
        (t - t_0)^\mu \norm{F_0 (t)}_{\mathcal{L}^2} + (t - t_0)^{\mu + \frac{1}{2}} (\norm{\nabla F_0 (t)}_{L^2 (\mathcal{F}_0)} + \norm{F_0 (t)}_{L^\infty}) \\
            \begin{aligned}
                &\leq C \int_{t_0}^t \Biggl( \frac{(t - t_0)^\mu}{(t-s)^\frac{1}{q}} + \frac{(t - t_0)^{\mu + \frac{1}{2}}}{(t - s)^\frac{1}{p}} \Biggr) \frac{1}{(1 + s)^2} \diff s \\
                &\leq \frac{C}{(1 + t_0)^{\frac{1}{q} - \mu + 1}}.
            \end{aligned}
    \end{multline*}
    %
%    As for $F_2 W$, a similar computation gives
%    \begin{equation*}
%        \norm{\mathbb{P} \Bigl( ( w(s) \cdot \nabla ) w (s) \Bigr)}_{\mathcal{L}^p} \leq C \norm{w (s)}_{L^q} \norm{\nabla w (s)}_{L^2} \leq C \norm{w (s)}_{L^2}^\frac{2}{q} \norm{\nabla w (s)}_{L^2}^{2 - \frac{2}{q}} \leq C \frac{\norm{W}_X \mathcal{E}_T}{(s - t_0)^{\mu + 1 - \frac{1}{q}}},
%    \end{equation*}
%    %
%    so that
%    \begin{multline*}
%        (t - t_0)^\mu \norm{F_2 W (t)}_{\mathcal{L}^2} + (t - t_0)^{\mu + \frac{1}{2}} (\norm{\nabla F_2 W (t)}_{L^2 (\mathcal{F}_0)} + \norm{F_2 W (t)}_{L^\infty}) \\
%            \begin{aligned}
%                &\leq C \norm{W}_X \mathcal{E}_T \int_{t_0}^t \Biggl( \frac{(t - t_0)^\mu}{(t-s)^\frac{1}{q}} + \frac{(t - t_0)^{\mu + \frac{1}{2}}}{(t - s)^\frac{1}{p}} \Biggr) \frac{1}{(s - t_0)^{\mu + 1 - \frac{1}{q}}} \diff s \\
%                &\leq C \norm{W}_X \mathcal{E}_T.
%            \end{aligned}
%    \end{multline*}
%    
%    
%    
%    

    The same computations as previously can be done for $F_1 W,$ $F_2 W$ and $F_3 W$
    introducing the further decay of $W$ induced by $\mathcal E_T$ (see \cite{Gallay_Maekawa__NS_obstacle} for more details), so that we finally get
    \begin{equation} \label{eq:last_est}
        \mathcal{E}_T \leq M + \tilde{K} (\abs{\alpha} (1 + t_0)^{\mu - \frac{1}{q} - 1} + \abs{\alpha} \mathcal{E}_T + \norm{W}_X \mathcal{E}_T),
    \end{equation}
    for some positive constant $\tilde{K}$ independent of $T$ and $t_0$. Taking $\delta$ and $K_6$ smaller and $t_0$ larger if needed, we can ensure that $2 \tilde{K} ( \abs{\alpha} + \norm{W}_X ) \leq 1$, so that \eqref{eq:last_est} leads to
    \begin{equation*}
        \mathcal{E}_T \leq 2 M + 2 \tilde{K} \abs{\alpha} (1 + t_0)^{\mu - \frac{1}{q} - 1},
    \end{equation*}
    for all $T > t_0$. 
    
  \medskip
      
 { We finally prove item $i)$ similarly.  For this we remark that,  with the same computations as above,  we can take first $r = 4 K \|W_0\|_{\mathcal L^2}$ and choose $\delta$ small enough and then $T$ small enough (depending on $|\alpha|,W_0$ and $r$) so that  given $W,W_1,W_2 \in B_r$ inequalities yield
  \eqref{eq_boundWH}  \eqref{eq:est_FW} and \eqref{eq:FW_lip} entail:
     \begin{align*}
   &    \|W_H(t)\|_X \leq 2 K \|W_0\|_{\mathcal L^2} \\
     &   \norm{F W}_X \leq K  \|W_0\|_{\mathcal L^2} +\frac{1}{2} \norm{W}_X,  \\   
       & \norm{F W_1 - F W_2}_X \leq \dfrac{1}{2} \norm{W_1 - W_2}_X. 
    \end{align*}
We conclude similarly as above this yields a unique fixed point.  In case 
$W_0 \in \mathcal H^1,$ we essentially add the further remark that, looking at   \eqref{eq_boundWH}  \eqref{eq:est_FW} and \eqref{eq:FW_lip},  we can choose $r,\delta,T$ depending only on $|\alpha|,\|W_0\|_{\mathcal H^1}$ to reach the same inequalities. 
  }  
\end{proof}

{
To conclude this part,  we point out that, for the linearized system we have the decay estimates of Lemma \ref{lem:est_semigroup}. }Hence we infer the content of 
{\bf Theorem \ref{thm:main2}} by remarking that, if $W_{0} \in \mathcal L^2 \cap \mathcal L^q$ (meaning that $w_0 \in L^q(\mathcal F_0)$) is small in $\mathcal{L}^2$ then the assumption \eqref{eq:assump_mu} is satisfied with $\mu = \frac 1q - \frac 12.$

\subsection{A logarithmic energy estimate}

In this section we complement the proof of {\bf Theorem \ref{thm_main1}}. 
This is the content of the next lemma:

\begin{lem}
    There exists a constant $K > 0$ such that, for any $\alpha \in \mathbb{R}$ and any $W_0 \in \mathcal{L}^2$, the solution of \eqref{eq:Duhamel} with initial data $W_0$ provided by Theorem \ref{th:cauchy} is global in time and satisfies, for all $t \geq 0$,
    \begin{equation*}
        \norm{W (t)}_{\mathcal{L}^2}^2 + \int_0^t \norm{D (w(s))}_{L^2 (\mathcal{F}_0)} \diff s \leq K \Bigl( \norm{W_0}_{\mathcal{L}^2}^2 + \alpha^2 \log{(1 + t)} + K_\alpha \Bigr),
    \end{equation*}
    where $K_\alpha = \alpha^2 (1 + \log{(1 + \abs{\alpha})})$.
\end{lem}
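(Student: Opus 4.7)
The plan is to obtain a differential inequality for $\|W\|_{\mathcal L^2}^2$ and integrate it. I would multiply the $w$-equation \eqref{eq:init_prob1} by $w$, integrate over $\mathcal F_0$, and add the Newton laws \eqref{eq:linear}--\eqref{eq:angular} multiplied respectively by $\ell_w$ and $\omega_w$ (weighted to match the $\mathcal L^2$-inner product). Using that $\Theta|_{\partial B_0}=g(t,1)x^\perp$ satisfies $\Theta\cdot n=0$ on $\partial B_0$, together with the divergence-freeness of $w$ and $\Theta$, both self-advective contributions $\int_{\mathcal F_0}((w-\ell_w)\cdot\nabla)w\cdot w$ and $\int_{\mathcal F_0}(\Theta\cdot\nabla)w\cdot w$ vanish, and Lemma \ref{lem:grad_sym} identifies $2\|D(w)\|_{L^2(\mathcal F_0)}^2$ with $\|\nabla W\|_{L^2}^2$. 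This yields an energy identity of the form
\begin{equation*}
\tfrac{1}{2}\tfrac{d}{dt}\|W\|_{\mathcal L^2}^2 + 2\|D(w)\|_{L^2(\mathcal F_0)}^2 = -\alpha\int_{\mathcal F_0}\bigl((w-\ell_w)\cdot\nabla\bigr)\Theta\cdot w\,dx + c_0\,\alpha\,\omega_w\,\zeta(t),
\end{equation*}
where the last term is the contribution of the disk-supported source $\alpha\zeta(t)x^\perp\mathds{1}_{B_0}/\mathcal J$ when tested against $W$.

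Next I would bound each term on the right-hand side. The forcing $c_0\alpha\omega_w\zeta$ is immediately controlled thanks to Corollary \ref{cor:reminder_est} ($|\zeta(t)|\leq C/(1+t)^2$) and $|\omega_w|\leq C\|W\|_{\mathcal L^2}$, giving a time-integrable bound after Young's inequality. For the convective cross-term I split
\begin{equation*}
\int_{\mathcal F_0}\bigl((w-\ell_w)\cdot\nabla\bigr)\Theta\cdot w\,dx = \int_{\mathcal F_0}(w\cdot\nabla)\Theta\cdot w\,dx - \ell_w\cdot\int_{\mathcal F_0}(\nabla\Theta)^T w\,dx.
\end{equation*}
The first piece is bounded by H\"older with $\|\nabla\Theta(t)\|_{L^2}\leq C/(1+t)^{1/2}$ (Lemma \ref{lem:est_theta}) and Ladyzhenskaya's inequality $\|w\|_{L^4}^2\leq C\|w\|_{L^2}\|\nabla w\|_{L^2}$, which is the $p=4$ case of Lemma \ref{lem:GN_const}. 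The second uses the sharp Gagliardo-Nirenberg control $|\ell_w|\leq C\|W\|_{\mathcal L^2}^{1/2}\|\nabla w\|_{L^2}^{1/2}$ from Corollary \ref{cor:est_l_GN}. After Young's inequality to absorb a small multiple of $\|\nabla w\|_{L^2}^2$ into the dissipation, this produces a differential inequality of the form
\begin{equation*}
\tfrac{d}{dt}\|W\|_{\mathcal L^2}^2 + c\|D(w)\|_{L^2(\mathcal F_0)}^2 \leq \frac{C\alpha^2}{1+t}\|W\|_{\mathcal L^2}^2 + g(t),
\end{equation*}
with $g\in L^1(0,\infty)$ coming from the $\zeta$-contribution and harmless remainders.

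The final step, which is also the main obstacle, is to convert this inequality into the additive bound $\alpha^2\log(1+t)+K_\alpha$ claimed in the statement, rather than the multiplicative $(1+t)^{C\alpha^2}$ that a naive Gronwall argument would produce. My plan is a bootstrap argument: on a short initial window $[0,T_\alpha]$ whose length depends on $|\alpha|$ (of order $1+\log(1+|\alpha|)$), the crude polynomial Gronwall bound yields precisely the additive correction $K_\alpha=\alpha^2(1+\log(1+|\alpha|))$, while on the tail $[T_\alpha,\infty)$ I would reinject the desired bound $\|W\|_{\mathcal L^2}^2\lesssim\|W_0\|_{\mathcal L^2}^2+\alpha^2\log(1+t)$ into the coefficient $C\alpha^2\|W\|_{\mathcal L^2}^2/(1+t)$, converting it into an almost integrable source $C\alpha^2(\|W_0\|_{\mathcal L^2}^2+\alpha^2\log(1+s))/(1+s)^{1+\eta}$ for some $\eta>0$ obtained by slightly sharpening the Gagliardo-Nirenberg exponent via Lemma \ref{lem:GN_const}. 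The delicate point is to close this bootstrap with consistent constants, exploiting also the integrated dissipation $\int_0^t\|D(w)\|_{L^2(\mathcal F_0)}^2\,ds$ appearing on the left-hand side to absorb the residual terms without degrading the final logarithmic growth.
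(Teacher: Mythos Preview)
Your energy identity is correct, but the subsequent analysis has a genuine gap that prevents the argument from closing.

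First, a computational point: with the fixed exponent $p=4$ in Corollary~\ref{cor:est_l_GN}, the $\ell_w$-term gives
\[
\alpha\,|\ell_w|\,\|\nabla\Theta\|_{L^2}\,\|w\|_{L^2}
\;\lesssim\;
\alpha\,\|W\|_{\mathcal L^2}^{3/2}\|\nabla W\|_{L^2}^{1/2}(1+t)^{-1/2}
\;\le\;
\varepsilon\|\nabla W\|_{L^2}^{2}
+ C\,\alpha^{4/3}(1+t)^{-2/3}\|W\|_{\mathcal L^2}^{2},
\]
so the coefficient is $(1+t)^{-2/3}$, not $(1+t)^{-1}$ as you wrote. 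Even if you optimise and let $p=p(t)$ grow logarithmically as in Lemma~\ref{lem:GN_const}, the best you reach is a coefficient of order $|\alpha|\,\dfrac{\log(1+t)}{1+t}$, and the $(w\cdot\nabla)\Theta\cdot w$ term cannot be pushed below $|\alpha|(1+t)^{-1}$ either, since $\|\nabla\Theta(t)\|_{L^\infty}\sim(1+t)^{-1}$ is sharp. Hence the Gronwall exponent is at best $C|\alpha|(\log(1+t))^{2}$, which diverges. Your proposed bootstrap does not repair this: reinserting the target bound $\|W\|_{\mathcal L^2}^{2}\lesssim\|W_0\|_{\mathcal L^2}^{2}+\alpha^{2}\log(1+t)$ into the source $C\alpha^{2}(1+t)^{-1}\|W\|_{\mathcal L^2}^{2}$ and integrating produces an extra $C\alpha^{4}(\log(1+t))^{2}$, which is strictly larger than the assumed bound for large $t$ (and for large $|\alpha|$), so the loop does not close. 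No choice of $\eta>0$ is available from Gagliardo--Nirenberg alone, because the obstruction is the exact $(1+t)^{-1}$ decay of $\nabla\Theta$.

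What the paper does instead is the missing idea: rather than comparing $v$ to $\alpha\Theta(t,\cdot)$, it compares to the \emph{time-shifted} vortex $\alpha\Theta(t+\tau,\cdot)$ for a parameter $\tau\geq0$, setting $\tilde W_\tau = W + \alpha(\Theta(\cdot)-\Theta(\cdot+\tau))$. The same energy computation then carries factors $(1+\tau+s)^{-1}$ in place of $(1+s)^{-1}$, and after the $p$-dependent Gagliardo--Nirenberg step with $p=2+\log(1+\tau+s)$ the Gronwall exponent becomes $K|\alpha|\bigl(\log^{2}(1+\tau+t)-\log^{2}(1+\tau)\bigr)$. Choosing $\tau=((1+|\alpha|)t)^{2}$ makes this quantity uniformly bounded in $t$ and $\alpha$. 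The additive $\alpha^{2}\log(1+t)$ and the correction $K_\alpha=\alpha^{2}(1+\log(1+|\alpha|))$ then arise not from Gronwall but from the discrepancy $\|\Theta(t)-\Theta(t+\tau)\|_{L^{2}}^{2}\lesssim\log(1+\tau)\sim\log(1+t)+\log(1+|\alpha|)$ when translating the estimate on $\tilde W_\tau$ back to $W$. The shift is the mechanism that converts the multiplicative Gronwall blow-up into the additive logarithmic growth; without it, your scheme cannot produce the stated bound.
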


\begin{proof}
    Fix $\alpha \in \mathbb{R}$, $W_0 \in \mathcal{L}^2$, and let $W \in \mathcal{C}^0 ([0, T], \mathcal{L}^2) \cap \mathcal{C}^0 ((0, T], \mathcal{H}^1)$ be the solution provided by Theorem \ref{th:cauchy} with initial data $W(0) = W_0$.  { We recall that we denote $V = W + \alpha \Theta.$ Given any $\tau \geq 0$,  we define then,}
    \begin{gather*}
        \tilde{w}_\tau (t, x) = v (t, x) - \alpha \Theta (t + \tau, x) = w(t, x) + \alpha \Bigl( \Theta (t,x) - \Theta (t + \tau, x) \Bigr), \qquad \textnormal{for all } x \in \mathcal{F}_0, \\
        \ell_{\tilde{W}_\tau (t)} = \ell_{V} (t) = \ell_{W (t)}, \\
        \omega_{\tilde{W}_\tau (t)} = \omega_V - \alpha g(t+\tau,1) = \omega_{W (t)} + \alpha \Bigl( g(t, 1) - g(t+\tau, 1) \Bigr).
    \end{gather*}
    The given $\tilde{W}_\tau$ (represented by the triplet $(\tilde{w}_\tau, \ell_{\tilde{W}_\tau (t)}, \omega_{\tilde{W}_\tau (t)})$) satisfy the system of equations \eqref{eq:init_prob1}-\eqref{eq:init_prob7} (or equivalently \eqref{eq:Duhamel}), where $\Theta (t)$ and $\zeta (t)$ are replaced by $\Theta (t+\tau)$ and $\zeta (t+\tau)$. Assume first that the solutions are smooth enough. Multiplying both sides of \eqref{eq:init_prob1} by $\tilde{w}_\tau$ and integrating by parts over $\mathcal{F}_0$ (using the fact that $\tilde{w}_\tau$ and $\Theta$ are divergence-free), we find
    \begin{multline*}
        \frac{1}{2} \frac{\diff}{\diff t} \norm{\tilde{w}_\tau}_{L^2 (\mathcal{F}_0)}^2 + 2 \norm{D(\tilde{w}_\tau (t))}_{L^2 (\mathcal{F}_0)}^2 = \int_{\partial B_0} \tilde{w}_\tau (t) \cdot \Sigma (\tilde{w}_\tau (t)) n \diff \sigma (x) - \frac{\alpha}{2} \int_{\partial B_0} \Theta (t+\tau) \abs{\tilde{w}_\tau}^2 \cdot n \diff \sigma (x) \\
            - \alpha \int_{\mathcal{F}_0} \tilde{w}_\tau (t) \cdot \Bigl( (\tilde{w}_\tau (t) - \ell_{\tilde{W}_\tau (t)} \Bigr) \cdot \nabla \Bigr) \Theta (t+\tau) \diff x + \frac{1}{2} \int_{\partial B_0} \abs{\tilde{w}_\tau (t)}^2 (\tilde{w}_\tau (t) - {\ell_{\tilde{W}_\tau (t)}}) \cdot n \diff \sigma (x).
    \end{multline*}
    Since $\Theta$ and ${\tilde{w}_\tau (t) - \ell_{\tilde{W}_\tau (t)}}$ are orthogonal to $n$ on $\partial B_0$, the second and fourth terms also vanish. \eqref{eq:init_prob3}-\eqref{eq:init_prob5} then yield
    \begin{multline*}
        \frac{1}{2} \frac{\diff}{\diff t} \Bigl( \norm{\tilde{w}_\tau (t)}_{L^2 (\mathcal{F}_0)}^2 + m \abs{\ell_{\tilde{W}_\tau (t)}}^2 + \mathcal{J} \abs{\omega_{\tilde{W}_\tau (t)}}^2 \Bigr) + 2 \norm{D(\tilde{w}_\tau (t))}_{L^2 (\mathcal{F}_0)}^2 \\
            = - \alpha \int_{\mathcal{F}_0} \tilde{w}_\tau (t) \cdot \Bigl( (\tilde{w}_\tau (t) - \ell_{\tilde{W}_\tau (t)} \Bigr) \cdot \nabla \Bigr) \Theta (t+\tau) \diff x + \alpha \zeta (t+\tau) \, \omega_{\tilde{W}_\tau (t)}.
    \end{multline*}
    The right-hand side can be estimated as usual with Lemma \ref{lem:est_theta}:
    \begin{gather*}
        \abs{\int_{\mathcal{F}_0} \tilde{w}_\tau (t) \cdot \Bigl( \tilde{w}_\tau (t) \cdot \nabla \Bigr) \Theta (t+\tau) \diff x} \leq \norm{\tilde{w}_\tau (t)}_{L^2 (\mathcal{F}_0)}^2 \frac{C}{1 + \tau + t}, \\
        \abs{\int_{\mathcal{F}_0} \tilde{w}_\tau (t) \cdot \Bigl( \ell_{\tilde{W}_\tau (t)} \cdot \nabla \Bigr) \Theta (t+\tau) \diff x} \leq \norm{\tilde{w}_\tau (t)}_{L^2 (\mathcal{F}_0)} \abs{\ell_{\tilde{W}_\tau (t)}} \frac{C}{(1 + \tau + t)^\frac{1}{2}}, \\
        \abs{\zeta (t+\tau) \, \omega_{\tilde{W}_\tau (t)}} \leq \frac{C}{(1 + \tau + t)^2} \abs{\omega_{\tilde{W}_\tau (t)}} \leq \frac{C}{(1 + \tau + t)^2} \Bigl( \abs{\omega_{\tilde{W}_\tau (t)}}^2 + 1 \Bigr)
    \end{gather*}
    Integrating in time from $0$ to $t$ for any $t > 0$ leads to
    \begin{multline} \label{eq:interm_est_log}
        \frac{1}{2} \norm{\tilde{W}_\tau (t)}_{\mathcal{L}^2}^2 + 2 \int_0^t \norm{D(\tilde{w}_\tau (s))}_{L^2 (\mathcal{F}_0)}^2 \diff s \\
            \leq \frac{1}{2} \norm{\tilde{W}_\tau (0)}_{\mathcal{L}^2}^2 + K \abs{\alpha} \int_0^t \biggl( \frac{\norm{\tilde{w}_\tau (s)}_{L^2 (\mathcal{F}_0)}^2}{1 + \tau + s} + \frac{\norm{\tilde{w}_\tau (s)}_{L^2 (\mathcal{F}_0)} \abs{\ell_{\tilde{W}_\tau (s)}}}{(1 + \tau + s)^\frac{1}{2}} + \frac{\abs{\omega_{\tilde{W}_\tau (s)}}^2}{(1 + \tau + s)^2} + \frac{1}{(1 + \tau + s)^2} \biggr) \diff s,
    \end{multline}
    for some constant $K > 0$, independent of $\tau$ in particular. Such an estimate then also holds for weaker solutions.
    From this estimate, for $\tau = 0$, the Gronwall lemma shows that $\norm{\tilde{W}_\tau (t)}_{\mathcal{L}^2}^2$ is bounded locally in time. 
    { Adapting for instance \cite[pp. 69-70]{tucsnaktakahashi2004},  we infer that $\|W_{\tau}(t)\|_{\mathcal H^1}$ does not blow in finite time either. Therefore,  { item $i)$} of Theorem \ref{th:cauchy} yields that our solution  $W$ is global in time.}
    Then, for general $\tau \geq 0$, we need to better estimate the second term, in particular $\abs{\ell_{\tilde{W}_\tau (s)}}$ which should decrease faster than $\norm{\tilde{w}_\tau (s)}_{L^2 (\mathcal{F}_0)}$ (or $\norm{\tilde{W}_\tau (s)}_{\mathcal{L}^2}$ equivalently). For this, we use Corollary \ref{cor:est_l_GN}. Applying it for $p = 2 + \log{(1 + \tau + s)}$, we get: % with the fact that $\abs{\ell_{\tilde{W}_\tau (s)}} \leq \norm{W}_{L^p (B_0)}$
    \begin{align*}
        \abs{\ell_{\tilde{W}_\tau (s)}} %&\leq \norm{W}_{L^p (\mathbb{R}^2)} \\
            &\leq C (2 + \log{(1 + \tau + s)})^\frac{1}{2} \norm{\tilde{W}_\tau (s)}_{L^2 (\mathbb{R}^2)}^{\frac{2}{2 + \log{(1 + \tau + s)}}} \norm{\nabla \tilde{W}_\tau (s)}_{L^2 (\mathbb{R}^2)}^{1-\frac{2}{2 + \log{(1 + \tau + s)}}} \\
            &\leq C (2 + \log{(1 + \tau + s)})^\frac{1}{2} \norm{\tilde{W}_\tau (s)}_{\mathcal{L}^2}^{\frac{2}{2 + \log{(1 + \tau + s)}}} \norm{D (\tilde{W}_\tau (s))}_{L^2 (\mathcal{F}_0)}^{1-\frac{2}{2 + \log{(1 + \tau + s)}}},
    \end{align*}
    where we have used Lemma \ref{lem:grad_sym} in the last estimate. Then, we obtain:
    \begin{align*}
        \frac{\norm{\tilde{w}_\tau (s)}_{L^2 (\mathcal{F}_0)} \abs{\ell_{\tilde{W}_\tau (s)}}}{(1 + \tau + s)^\frac{1}{2}} &\leq C \biggl(\frac{2 + \log{(1 + \tau + s)}}{1 + \tau + s}\biggr)^\frac{1}{2} \norm{\tilde{W}_\tau (s)}_{\mathcal{L}^2}^{1 + \frac{2}{2 + \log{(1 + \tau + s)}}} \norm{D (\tilde{W}_\tau (s))}_{L^2 (\mathcal{F}_0)}^{1-\frac{2}{2 + \log{(1 + \tau + s)}}} \\
            &\leq \norm{D (\tilde{W}_\tau (s))}_{L^2 (\mathcal{F}_0)}^2 + C \biggl[ \frac{2 + \log{(1 + \tau + s)}}{1 + \tau + s} \biggr]^{\xi (\tau + s)} \norm{\tilde{W}_\tau (s)}_{\mathcal{L}^2}^2,
    \end{align*}
    where
    \begin{equation*}
        \xi (x) \coloneqq \frac{1}{1 + \frac{2}{2 + \log{(1+x)}}} = 1 - \frac{1}{2 + \log{(1+x)}} \frac{2}{1 + \frac{2}{2 + \log{(1+x)}}}.
    \end{equation*}
    In particular, we can easily compute that
    \begin{equation*}
        \biggl[ \frac{2 + \log{(1 + \tau + s)}}{1 + \tau + s} \biggr]^{\xi (\tau + s)} \leq C \frac{2 + \log{(1 + \tau + s)}}{1 + \tau + s}.
    \end{equation*}
    Therefore, we obtain
    \begin{multline*}
        \frac{1}{2} \norm{\tilde{W}_\tau (t)}_{\mathcal{L}^2}^2 + \int_0^t \norm{D(\tilde{w}_\tau (s))}_{L^2 (\mathcal{F}_0)}^2 \diff s \\
            \leq \frac{1}{2} \norm{\tilde{W}_\tau (0)}_{\mathcal{L}^2}^2 + \frac{K \abs{\alpha}}{1+\tau} + K \abs{\alpha} \int_0^t \norm{\tilde{W}_\tau (s)}_{L^2 (\mathcal{F}_0)}^2 \frac{2 + \log{(1 + \tau + s)}}{1 + \tau + s} \diff s,
    \end{multline*}
    By applying the Gronwall lemma, we get
    \begin{multline*}
        \frac{1}{2} \norm{\tilde{W}_\tau (t)}_{\mathcal{L}^2}^2 + \int_0^t \norm{D(\tilde{w}_\tau (s))}_{L^2 (\mathcal{F}_0)}^2 \diff s \\
        \leq K \Bigl[ \norm{\tilde{W}_\tau (0)}_{\mathcal{L}^2}^2 + \frac{\abs{\alpha}}{1+\tau} \Bigr] \exp{\Bigl[K \abs{\alpha} ( \log{(1 + \tau + t)}^2 - \log{(1 + \tau)}^2 + \log{(1 + \tau + t)} - \log{(1 + \tau)} )\Bigr]}
    \end{multline*}
    Now take $\tau = (\chi t)^2$ where $\chi = 1 + \abs{\alpha}$, we get:
    \begin{equation*}
        \log{(1 + \tau + t)} - \log{(1 + \tau)} = \log{\Bigr( 1 + \frac{t}{1+ (\chi t)^2}\Bigl )} \leq C
    \end{equation*}
    and
    \begin{align*}
        \log{(1 + \tau + t)}^2 - \log{(1 + \tau)}^2 &= \log{\Bigr( 1 + \frac{t}{1+ (\chi t)^2}\Bigl )} \Bigl( \log{(1 + t + (\chi t)^2)} + \log{(1 + (\chi t)^2)} \Bigr) \\
            &\leq C \frac{t \log{(1 + (\chi t)^2)}}{1 + (\chi t)^2} \leq \frac{C}{\chi}.
    \end{align*}
    Thanks to the estimate \eqref{eq:est_theta_l2} and the explicit expression of $g(t,r)$, there also holds
    \begin{align*}
        \norm{\tilde{W}_\tau (0)}_{\mathcal{L}^2}^2 &\leq 2 \norm{W_0}_{\mathcal{L}^2} + 2 \alpha^2 \norm{\Theta (0) - \Theta (\tau)}_{L^2 (\mathbb{R}^2)} + 2 \alpha^2 \abs{g(0,1) - g(\tau, 1)} \\
            &\leq \norm{W_0}_{\mathcal{L}^2}^2 + C \alpha^2 \Bigl( 1 + \log{(1+(\chi t)^2)} \Bigr) \\
            &\leq \norm{W_0}_{\mathcal{L}^2}^2 + C \alpha^2 \Bigl( 1 + \log{(1 + \abs{\alpha})} + \log{(1 + t)} \Bigr),
    \end{align*}
    but also
    \begin{align*}
        \norm{W (t)}_{\mathcal{L}^2}^2 &\leq 2 \norm{\tilde{W}_\tau (t)}_{\mathcal{L}^2}^2 + 2 \alpha^2 \norm{\Theta (t+\tau) - \Theta (t)}_{\mathcal{L}^2}^2 \\
            &\leq 2 \norm{\tilde{W}_\tau (t)}_{\mathcal{L}^2}^2 + \frac{\alpha^2}{2 \pi} \log{\Bigl( 1 + \frac{t}{1 + (\chi t)^2} \Bigr)} \\
            &\leq 2 \norm{\tilde{W}_\tau (t)}_{\mathcal{L}^2}^2 + \frac{\alpha^2}{2 \pi} \frac{t}{1 + (\chi t)^2} \\
            &\leq 2 \norm{\tilde{W}_\tau (t)}_{\mathcal{L}^2}^2 + \frac{\alpha^2}{2 \chi \pi},
    \end{align*}
    and
    \begin{align*}
        \int_0^t \norm{D(w (s))}_{L^2 (\mathcal{F}_0)}^2 \diff s &\leq 2 \int_0^t \norm{D(\tilde{w}_\tau (s))}_{L^2 (\mathcal{F}_0)}^2 \diff s + 2 \alpha^2 \int_0^t \norm{D(\Theta (\tau + s) - \Theta (s))}_{L^2 (\mathcal{F}_0)}^2 \diff s \\
            &\leq 2 \int_0^t \norm{D(\tilde{w}_\tau (s))}_{L^2 (\mathcal{F}_0)}^2 \diff s + 2 \alpha^2 \int_0^t \norm{\nabla (\Theta (\tau + s) - \Theta (s))}_{L^2 (\mathcal{F}_0)}^2 \diff s \\
            &\leq 2 \int_0^t \norm{D(\tilde{w}_\tau (s))}_{L^2 (\mathcal{F}_0)}^2 \diff s + 2 \kappa_1 \alpha^2 \int_0^t \Bigl( \frac{1}{1 + \tau} - \frac{1}{1 + \tau + s} \Bigr) \diff s \\
            &\leq 2 \int_0^t \norm{D(\tilde{w}_\tau (s))}_{L^2 (\mathcal{F}_0)}^2 \diff s + 2 \kappa_1 \alpha^2 \Bigl( \frac{t}{1 + (\chi t)^2} - \log{(1 + \frac{t}{1 + (\chi t)^2})} \Bigr) \diff s \\
            &\leq 2 \int_0^t \norm{D(\tilde{w}_\tau (s))}_{L^2 (\mathcal{F}_0)}^2 \diff s + 2 \kappa_1 \frac{\alpha^2}{\chi}.
    \end{align*}
    The last five estimates put together (along with $\chi \geq 1$) lead to the result.
\end{proof}

%\newpage
\section{Global stability for finite-energy solutions}\label{sec:bounded}
This last section is devoted to the proof of {\bf Theorem \ref{thm_main3}}. For this, we first recall the partial result in \cite{Ervedoza_Hillairet__disk_fluid} on which relies our analysis:

\begin{lem}[{\cite[Theorem 1.3]{Ervedoza_Hillairet__disk_fluid}}]
Let $q \in (1,2)$ and assume that $V_0 \in \mathcal L^q \cap \mathcal L^2$ with $\|V_0\|_{\mathcal L^2}$ sufficiently small.
Then the unique finite-energy weak solution $V$ with initial data $V_0$ satisfies:
\begin{align}
& \sup_{t >0} t^{\frac 1p - \frac 1q} \|V(t)\|_{\mathcal L^p}  <\infty && \quad \forall \, p \in (2,\infty) \label{eq:decayV} \\
& \sup_{t >0} t^{\frac 1q} |\ell_{v}(t)| < \infty. \label{eq:decayl}
\end{align}
\end{lem}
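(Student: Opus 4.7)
The proof adapts the fractional-power scheme of Gallay--Maekawa \cite{Gallay_Maekawa__NS_obstacle} to the fluid+disk setting. Given $\mu \in (0, 1/q - 1/2)$, set $U := A^{-\mu} V$. By Lemma \ref{lem:fractional}, the initial datum $U_0 = A^{-\mu} V_0$ belongs to $\mathcal{L}^2$ with norm controlled by $\|V_0\|_{L^q} + \|V_0\|_{L^2}$. Formally $U$ satisfies $\partial_t U + A U = A^{-\mu} F(V)$ with $F(V) = - \mathbb{P}[((v - \ell) \cdot \nabla) v \, \mathds{1}_{\mathcal{F}_0}]$, and once $U$ is shown to stay uniformly in $\mathcal{L}^2$, the identity $V = A^\mu U$ combined with the smoothing estimates of Lemma \ref{lem:est_semigroup} translates this control into the decay bounds \eqref{eq:decay1}--\eqref{eq:decay2}.

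The $\mathcal{L}^2$ energy identity for $U$ produces on the right-hand side a term of the form $\langle A^{-\mu} U, \mathbb{P} \divg ((v - \ell) \otimes v \, \mathds{1}_{\mathcal{F}_0}) \rangle$. The purely quadratic piece coming from $v \otimes v$ can be controlled as in \cite{Gallay_Maekawa__NS_obstacle} by means of Lemma \ref{lem:A_div_bounded} and Corollary \ref{cor_zecor} together with the $L^p$-decay of $V$ furnished by Lemma \ref{lem:est_semigroup}. The cross piece in $\ell \otimes v$ is more delicate: after Cauchy--Schwarz it is bounded by a quantity proportional to $|\ell(t)|$ and $\|\nabla V(t)\|_{L^2}$ (up to lower-order factors), and absorbing it via Gronwall forces us to establish the time integrability $\ell \in L^2((0, \infty))$, the parallel of the $L^2_t$ integrability of $\|\nabla V\|_{L^2}$ already provided by the global energy identity.

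The first independent step of the proof is therefore to prove $\ell \in L^2((0,\infty))$. The a priori bounds $V \in L^\infty_t \mathcal{L}^2 \cap L^2_t \dot{H}^1$ combined with Corollary \ref{cor:est_l_GN} applied with $p = 4$ immediately yield $\ell \in L^4((0,\infty))$. This weaker integrability already suffices to run the energy estimate on $U_{\mu'} := A^{-\mu'} V$ for a sufficiently small $\mu' \in (0, \mu)$, producing a preliminary algebraic decay of $\|V(t)\|_{\mathcal{L}^2}$. Feeding this decay back into Corollary \ref{cor:est_l_GN} with a time-dependent exponent $p = p(t) \to \infty$ and exploiting the sharp $\sqrt{p}$-constant of Lemma \ref{lem:GN_const}, we then improve the integrability of $\ell$ from $L^4$ to $L^2$, breaking the apparent circularity between the decay of $V$ and the integrability of $\ell$.

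With $\ell \in L^2((0,\infty))$ at hand, the full energy estimate on $U$ closes by Gronwall and yields $U \in L^\infty((0,\infty); \mathcal{L}^2)$. A short Duhamel step on an interval of length $\tau \simeq t$ together with the smoothing estimates of Lemma \ref{lem:est_semigroup} then transfers this uniform bound on $U$ into the decay rates \eqref{eq:decay1}--\eqref{eq:decay2} for $v$ and $\ell$. The main obstacle is the third paragraph's bootstrap: proving $\ell \in L^2((0,\infty))$ without assuming a priori the very decay of $V$ that is the target of the theorem. Once this loop is broken, the remainder of the argument is essentially a combination of Gallay--Maekawa-style identities and the $L^p$--$L^q$ decay estimates already established in Section 2.
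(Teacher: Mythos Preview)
This lemma is not proved in the present paper at all: it is quoted verbatim from \cite[Theorem~1.3]{Ervedoza_Hillairet__disk_fluid}, where it is obtained by a direct perturbative Duhamel/fixed-point argument that exploits the smallness of $\|V_0\|_{\mathcal L^2}$ together with the $L^p$--$L^q$ decay of the semigroup (Lemmas~\ref{lem:est_semigroup}--\ref{lem:duality_est}). The fractional-power scheme you describe is instead the device the present paper uses to \emph{remove} the smallness assumption (Theorem~\ref{thm_main3}, via Propositions~\ref{prop:Lq} and~\ref{prop:decay}); so you are essentially sketching the proof of a different, stronger statement.

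Even read as a route to the large-data result, your outline has a concrete gap in the bootstrap for $\ell\in L^2((0,\infty))$. In the energy identity for $U=A^{-\mu}V$ one ends up (see the proof of Proposition~\ref{prop:decay}) with a Gronwall coefficient $\bigl(|\ell_v|+\|\nabla V\|_{L^2}\bigr)^2$, and this structure is \emph{independent of} $\mu$: replacing $\mu$ by a small $\mu'$ does not relax the requirement $\ell\in L^2_t$ to $\ell\in L^4_t$. So the step ``$\ell\in L^4$ suffices to run the estimate for small $\mu'$ and extract preliminary decay'' does not close as written. The paper avoids this circularity by proving $\ell\in L^2((0,\infty))$ first and independently (Proposition~\ref{prop:Lq}), via the Duhamel representation $\ell=\ell_S+\ell_{NL}$, the duality estimate of Lemma~\ref{lem:duality_est} and a Hardy--Littlewood--Sobolev inequality in time; no decay of $V$ is needed for that step. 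Also note that your appeal to ``the $L^p$-decay of $V$ furnished by Lemma~\ref{lem:est_semigroup}'' conflates the linear and nonlinear solutions, and that your sketch never actually invokes the smallness hypothesis of the lemma --- which is precisely what makes the original perturbative proof in \cite{Ervedoza_Hillairet__disk_fluid} go through.
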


{\bf Theorem  \ref{thm_main3}} is then a direct consequence of the two following propositions
that we prove in the next subsections:
\begin{prop} \label{prop:Lq}
Let $q \in (1,2)$ and asume that  $V_0 \in \mathcal L^q \cap \mathcal L^2$. Then the unique finite-energy solution $V$ starting from $V_0$
satisfies:
\begin{align}
& V \in C([0,\infty) ; \mathcal L^q \cap \mathcal L^2) \\
& \nabla V \in  L^1_{loc}([0,\infty); L^q(\mathcal F_0) \cap L^2(\mathcal F_0) ) \\
 & \ell_{V} \in L^2((0,\infty)).
\end{align}
\end{prop}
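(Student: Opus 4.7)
My plan is to prove the proposition in three stages: construct a local mild solution in $\mathcal L^q \cap \mathcal L^2$, globalize it thanks to the finite-energy a priori bound, and finally derive the global $L^2$-in-time bound on $\ell_V$ using the sharp Gagliardo--Nirenberg estimate of Lemma~\ref{lem:GN_const}.

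Since $v$ is divergence-free in $\mathcal F_0$ and $(v - \ell_V)\cdot n = 0$ on $\partial B_0$ (because $x^\perp \cdot n = 0$ on $\partial B_0$), the nonlinearity rewrites as $-\mathbb P \divg[(v - \ell_V) \otimes v\, \mathds 1_{\mathcal F_0}]$ and falls within the scope of Corollary~\ref{cor_zecor}. This yields the Duhamel formula
\begin{equation*}
V(t) = S(t) V_0 - \int_0^t S(t-s)\, \mathbb P \divg\bigl[(v - \ell_V)(s) \otimes v(s)\, \mathds 1_{\mathcal F_0}\bigr]\diff s.
\end{equation*}
I would run a Kato-type contraction argument in a Banach space of the form $C([0,T]; \mathcal L^q \cap \mathcal L^2)$ equipped with additional weighted-in-time $\mathcal L^r$-norms for some $r > 2$, using Lemma~\ref{lem:est_semigroup} for the linear part and Lemma~\ref{lem:duality_est} (combined with a bilinear estimate in the spirit of Lemma~\ref{lem:bilinear}) for the Duhamel convolution. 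This gives local existence on an interval $[0, T_*]$ with $T_*$ controlled by $\|V_0\|_{\mathcal L^q \cap \mathcal L^2}$, and uniqueness of finite-energy solutions identifies this mild solution with the given $V$.

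For the global extension, the standard energy inequality yields $\|V(t)\|_{\mathcal L^2}$ non-increasing and $\nabla V \in L^2((0,\infty); L^2)$. Since the local existence time on each restart depends only on $\|V(t_n)\|_{\mathcal L^2}$ (uniformly bounded) and a Gronwall control of $\|V(t)\|_{\mathcal L^q}$ along the Duhamel formula prevents blow-up in finite time of the $\mathcal L^q$-norm, one extends the solution to $V \in C([0,\infty); \mathcal L^q \cap \mathcal L^2)$ with $\nabla v \in L^1_{loc}((0,\infty); L^q \cap L^2)$.

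For the final claim $\ell_V \in L^2((0,\infty))$, the key input is Lemma~\ref{lem:GN_const}. Using $|\ell_V| \leq C\|V\|_{L^p(B_0)}$ together with the sharp GN inequality, one has
\begin{equation*}
|\ell_V(t)|^2 \leq C p\, \|V(t)\|_{\mathcal L^2}^{4/p} \|\nabla V(t)\|_{L^2}^{2 - 4/p} \qquad \forall\, p \geq 2.
\end{equation*}
The hard part is that a mere uniform-in-time $\mathcal L^2$-bound on $V$ is not sufficient to integrate this over $(0,\infty)$: some genuine decay of $\|V(t)\|_{\mathcal L^2}$ is needed. I plan to extract it by splitting $V$ into its linear part $S(t) V_0$ (for which Lemma~\ref{lem:est_semigroup} provides $\|S(t) V_0\|_{\mathcal L^2} \lesssim t^{1/2 - 1/q}\|V_0\|_{\mathcal L^q}$, which is square-integrable in $t$ since $q < 2$) and its nonlinear contribution, estimated via Lemma~\ref{lem:duality_est} applied to the flux $(v - \ell_V)\otimes v$ at a well-chosen exponent $q_0$ close to $q$, so that the resulting Hardy--Littlewood--Sobolev convolution closes in $L^2_t$. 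Choosing $p = p(t) = 2 + \log(2+t)$ in the Gagliardo--Nirenberg bound (as in the logarithmic energy estimate of Section~3.2) is what allows the sharp $\sqrt p$ constant of Lemma~\ref{lem:GN_const} to be compensated by the extra $L^q$-integrability, providing the final $L^2_t$-bound on $\ell_V$.
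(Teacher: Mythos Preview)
Your first two stages (local $\mathcal L^q\cap\mathcal L^2$ mild solution via a Kato contraction, then globalization by a no-blow-up argument on $\|V(t)\|_{\mathcal L^q}$) match the paper's proof closely.

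The third stage, however, contains a concrete error. You claim that $\|S(t)V_0\|_{\mathcal L^2}\lesssim t^{1/2-1/q}\|V_0\|_{\mathcal L^q}$ is square-integrable in $t$; but for $q\in(1,2)$ the exponent $1/2-1/q$ lies in $(-1/2,0)$, so $t\mapsto t^{1/2-1/q}$ is \emph{not} in $L^2((1,\infty))$. Thus the linear part of $\|V(t)\|_{\mathcal L^2}$ already fails to have the decay you need, and the whole strategy of first proving $L^2_t$-decay of $\|V(t)\|_{\mathcal L^2}$ and then feeding it into the Gagliardo--Nirenberg bound collapses. (There is also a circularity risk: the nonlinear flux $(v-\ell_V)\otimes v$ contains $\ell_V$, so bounding the Duhamel tail in $L^2_t$ would itself require the $L^2_t$-control on $\ell_V$ you are trying to prove.)

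The paper avoids this by splitting $\ell_V$ itself, not $\|V\|_{\mathcal L^2}$, via Duhamel: $\ell_V=\ell_{S(t)V_0}+\ell_{NL}(t)$. The linear piece satisfies $|\ell_{S(t)V_0}|\lesssim \min(1,t^{-1/q})\|V_0\|_{\mathcal L^q}$, which \emph{is} in $L^2((0,\infty))$ since $1/q>1/2$. For $\ell_{NL}$ one uses the third estimate of Lemma~\ref{lem:duality_est} (the dedicated $K_\ell$ bound, with decay $t^{-1/2-1/r}$) at some $r>2$, applies Hardy--Littlewood--Sobolev in time, and then closes by an absorption trick: choose $T_{mid}$ so large that $\int_{T_{mid}}^\infty\|\nabla V\|_{L^2}^2$ is small enough to make the coefficient in front of $\|\ell_V\|_{L^2(T_{mid},T)}$ strictly less than $1$. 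No decay of $\|V(t)\|_{\mathcal L^2}$ is needed, and the sharp Gagliardo--Nirenberg constant with time-dependent $p(t)$ plays no role in this proposition; that device is used elsewhere (the logarithmic energy estimate and Proposition~\ref{prop:decay}).
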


\begin{prop} \label{prop:decay}
Let $q \in (1,2)$ and asume that  $V_0 \in \mathcal L^q \cap \mathcal L^2$. Then the unique finite-energy solution $V$ starting from $V_0$
satisfies:
\begin{equation}
\liminf_{t \to \infty} \|V(t)\|_{\mathcal L^2} = 0.
\end{equation}
\end{prop}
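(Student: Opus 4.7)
The strategy is to combine the dissipation coming from the finite-energy identity with the two-dimensional Gagliardo--Nirenberg inequality, leveraging the $\mathcal L^q$-regularity provided by Proposition \ref{prop:Lq}.

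First, I would exploit the energy identity
\begin{equation*}
\tfrac12\|V(t)\|_{\mathcal L^2}^2 + 2\int_0^t \|D(v(s))\|_{L^2(\mathcal F_0)}^2\diff s = \tfrac12 \|V_0\|_{\mathcal L^2}^2,
\end{equation*}
which implies both that $t \mapsto \|V(t)\|_{\mathcal L^2}$ is non-increasing (so $\liminf_{t\to\infty}\|V(t)\|_{\mathcal L^2} = \lim_{t\to\infty}\|V(t)\|_{\mathcal L^2}$) and that $\|D(v)\|_{L^2(\mathcal F_0)} \in L^2(0,\infty)$. Combined with the Korn-type identity of Lemma \ref{lem:grad_sym}, this gives $\|\nabla V\|_{L^2(\mathbb R^2)} \in L^2(0,\infty)$, and in particular one may extract a sequence $t_n \to \infty$ along which $\|\nabla V(t_n)\|_{L^2(\mathbb R^2)} \to 0$.

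Viewing $V(t_n) \in \mathcal L^q \cap \mathcal H^1$ as a function on the whole of $\mathbb R^2$ (extended by its rigid motion inside $B_0$), the planar Gagliardo--Nirenberg inequality applied componentwise (with scaling exponent $\theta = q/2 \in (1/2, 1)$) yields
\begin{equation*}
\|V(t_n)\|_{L^2(\mathbb R^2)} \leq C\,\|V(t_n)\|_{L^q(\mathbb R^2)}^{q/2}\,\|\nabla V(t_n)\|_{L^2(\mathbb R^2)}^{1-q/2}.
\end{equation*}
Provided $\|V(t_n)\|_{\mathcal L^q}$ remains bounded as $n \to \infty$, the right-hand side vanishes in the limit, so $\|V(t_n)\|_{\mathcal L^2} \to 0$, which via the monotonicity of the first step actually delivers the stronger conclusion $\lim_{t \to \infty}\|V(t)\|_{\mathcal L^2} = 0$.

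The main obstacle is therefore to establish the uniform bound $\sup_{t \geq 0}\|V(t)\|_{\mathcal L^q} < \infty$, which does not directly follow from the time-continuity statement in Proposition \ref{prop:Lq}. My plan to secure it is to upgrade the Duhamel/mild-formulation argument behind that proposition: one writes
\begin{equation*}
V(t) = S(t)V_0 + \int_0^t S(t-s)\,\mathbb P\bigl[((\ell_V(s) - v(s))\cdot\nabla)v(s)\bigr]\diff s,
\end{equation*}
controls the linear part by the $\mathcal L^q\to\mathcal L^q$ boundedness of $S(t)$ coming from Lemma \ref{lem:est_semigroup}, and treats the Duhamel integral by rewriting the genuinely convective part in divergence form $(v\cdot\nabla)v = \divg(v\otimes v)$ and then applying the duality decay estimates of Lemma \ref{lem:duality_est}, coupled with the global time-integrabilities $\nabla V \in L^2_t L^2_x$ (from the energy identity) and $\ell_V \in L^2_t$ (from Proposition \ref{prop:Lq}), and the refined Gagliardo--Nirenberg bound of Corollary \ref{cor:est_l_GN} to trade time-integrability against space-integrability of $\ell_V$. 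The bookkeeping is the same as in the fixed-point scheme of Theorem \ref{th:cauchy}; the delicate point is, as there, the strictly worse time-decay of the translational term $\ell_V \cdot \nabla v$ relative to the classical nonlinearity $(v\cdot\nabla)v$.
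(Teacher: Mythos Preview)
Your overall strategy---extract a sequence $t_n \to \infty$ along which $\|\nabla V(t_n)\|_{L^2} \to 0$ from the energy dissipation, then interpolate via Gagliardo--Nirenberg between $L^q$ and $\dot H^1$---is natural, and your observation that monotonicity upgrades $\liminf$ to a full limit is a nice bonus. However, the proof has a genuine gap at the step you yourself flag: the uniform bound $\sup_{t\geq 0}\|V(t)\|_{\mathcal L^q} < \infty$.

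Your proposed Duhamel argument does not obviously close. Writing the nonlinearity in divergence form and applying Lemma~\ref{lem:duality_est} with output in $\mathcal L^q$ produces a convolution kernel behaving like $(t-s)^{-1+1/q}$ for $t-s \geq 1$; since $q \in (1,2)$, this kernel is not in $L^p(1,\infty)$ for any $p \leq q' = q/(q-1)$. The time-integrability of the nonlinear source available from $\nabla V \in L^2_t L^2_x$ and $\ell_V \in L^2_t$ is not strong enough to compensate, and the term $|\ell_V|\,\|V\|_{\mathcal L^q}$ reintroduces the very quantity you are trying to bound with a coefficient whose time integral is not small. The fixed-point bookkeeping of Theorem~\ref{th:cauchy} does not transfer here because it relies either on smallness of the data or on shortness of the time interval, neither of which is available for arbitrary initial data on $(0,\infty)$.

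The paper sidesteps the uniform $\mathcal L^q$ bound entirely. Rather than interpolating $V$ between $L^q$ and $\dot H^1$, one sets $U = A^{-\mu}V$ for $\mu < 1/q - 1/2$ (well-defined by Lemma~\ref{lem:fractional} together with Proposition~\ref{prop:Lq}) and derives an energy inequality for $U$. The source term $\langle A^{-\mu}\mathbb P[(V-\ell_V)\cdot\nabla V], U\rangle$ is bounded, after interpolation between fractional powers, by $C(|\ell_V| + \|\nabla V\|_{L^2})^2 \|U\|_{\mathcal L^2}^2 + \tfrac12\|A^{1/2}U\|_{\mathcal L^2}^2$; absorbing the last term and applying Gronwall---which closes precisely because $|\ell_V|^2 + \|\nabla V\|_{L^2}^2 \in L^1(0,\infty)$---yields $\sup_t \|U(t)\|_{\mathcal L^2} < \infty$ and $\int_0^\infty \|A^{1/2}U\|^2 < \infty$. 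One then extracts $t_n$ with $\|A^{1/2}U(t_n)\| \to 0$ and concludes via $\|V(t_n)\|_{\mathcal L^2} = \|A^\mu U(t_n)\|_{\mathcal L^2} \leq C\|U(t_n)\|^{1-2\mu}\|A^{1/2}U(t_n)\|^{2\mu} \to 0$. The quantity $\|A^{-\mu}V\|_{\mathcal L^2}$ thus plays the role of a proxy for $\|V\|_{\mathcal L^q}$ which, unlike the latter, admits a closed energy estimate.
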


\subsection{Proof of Proposition \ref{prop:Lq}}
Let $q<2$ and $V_0 \in \mathcal L^q \cap \mathcal L^2$. We recall that, by the construction of \cite{tucsnaktakahashi2004}, we have
$V \in C([0,\infty); \mathcal L^2)$ and  $\nabla V \in L^2((0,\infty); L^2(\mathbb R^2)).$ Furthermore, with the proof of {\bf Theorem \ref{th:cauchy}} we know that the solution $V$ is computed through the Duhamel formula:
\begin{equation} \label{eq:Duhamel2}
V(t) = S(t) V_0 + \int_0^t S(t-s) \mathbb P[ \mathbf{1}_{\mathcal F_0} (V-\ell_V) \cdot \nabla V ]{\rm d}s.
\end{equation}
since it is the only fixed point of the mapping:
$$
\mathcal D: W \mapsto S(t) V_0 + \int_0^t S(t-s) \mathbb P[\mathbf 1_{\mathcal F_0} (W -\ell_{W}) \cdot \nabla W]{\rm d}s.
$$
in the space $C([0,T]; \mathcal L^2) \cap C((0,T) ; H^1(\mathcal F_0))$ endowed with the $X$-norm:
$$
\|W\|_{X} = \sup_{[0,T]} \|W\|_{\mathcal L^2} + \sup_{[0,T]} { \sqrt{t} \|\nabla W\|_{L^2(\mathcal F_0)}}
$$ 
(for $T$ sufficiently small). We show here that 
the same property holds adding the property $V \in C([0,T]; \mathcal L^q) \cap C((0,T) ; W^{1,q}(\mathcal F_0)).$ 
Let fix $B_T$ the subset in 
$C([0,T]; \mathcal L^2 \cap \mathcal L^q) \cap C((0,T) ;  H^1(\mathcal F_0) \cap W^{1,q}(\mathcal F_0 ))$ 
containing $W$ satisfying 
$$
 \|W\|_{X} \leq  2 \|V_0\|_{\mathcal L^2}, \quad  \|W\|_{X_q} := \sup_{[0,T]} \left( \|W(t)\|_{\mathcal L^q}  + \sqrt{t} \|\nabla W(t)\|_{L^q(\mathbb R^2)}\right) \leq (1+ K_2) ( \|V_0\|_{\mathcal L^q} + \|V_0\|_{\mathcal L^2}),
$$
where $K_2$ is the constant involved in {\bf  Lemma \ref{lem:est_semigroup}}.
By adapting the computations in the proof of {\bf Theorem \ref{th:cauchy}}, we obtain a time $T_0$ sufficiently small such that 
for $T <T_0$ the above mapping is a contraction on $B_T$ for the $X$-norm. Then, given $W \in B_T,$ applying the duality estimates in 
Lemma \ref{lem:duality_est} with $p=q$ we obtain that 
$$
 \|\mathcal D[W](t)\|_{\mathcal L^q} \leq \|V_0\|_{\mathcal L^q} + \int_0^t \phi_q(t-s) \|(W-\ell_{W} )\otimes W\|_{L^q(\mathcal F_0)} {\rm ds} \\
\quad \forall \, t \in [0,T]$$
where 
$$
\phi_q(s) = K_5 
\begin{cases}
 s^{-1/2} & \text{ if $s <1$} \\
s^{-1+\frac 1q} & \text{if $s > 1$}
\end{cases} 
$$
The last integral we denote $I[W]$ is then bounded by applying the Gagliardo Nirenberg inequality:
\begin{align*}
I[W] & \leq \int_{0}^t |\phi_q(t-s)| \left( \|W\|_{\mathcal L^2}^{\frac 1q} \|\nabla W\|^{2 (1- \frac 1q)}_{L^2(\mathcal F_0)} \right) + |\ell_W| \|W\|_{\mathcal L^q} {\rm d}s \\
& \leq \int_0^t |\phi_q(t-s)| \left( s^{-(1-1/q)}  \|W\|^2_{X} + \|W\|_{X} \|W\|_{\mathcal L^q}\right) \diff s.
\end{align*}
At this point, we realize that, for $T < 1$ there is an absolute constant $\tilde{K}_5$ for which:
$$
\sup_{[0,T]} \int_0^t \phi_q(t-s) s^{-(1-1/q)} \leq \tilde{K}_5 T^{\frac 1q - \frac 12} \qquad  \sup_{[0,T]}\int_0^t \phi_q(t-s) \leq \tilde{K}_5 \sqrt{T}.
$$
Since $q < 1/2$ we can take $T_0$ smaller  (but  decreasingly in the quantity $\|V_0\|_{\mathcal L^2} + \|V_0\|_{\mathcal L^q}$) so that for $T <T_0$:
\begin{equation} \label{eq_T1}
\sup_{[0,T]} \|\mathcal D[W](t)\|_{\mathcal L^q}  \leq \|V_0\|_{\mathcal L^q} + \tilde{K}_5 T^{\frac 1q - \frac 12}\|V_0\|_{\mathcal L^2} \left( \|V_0\|_{\mathcal L^2} + \sup_{[0,T_1]}\|W\|_{\mathcal L^q} \right) \leq  \|V_0\|_{\mathcal L^q} + \|V_0\|_{\mathcal L^2}.
\end{equation}
As for the gradient, we apply semi-group estimates of  {Lemma \ref{lem:est_semigroup}} to yield that
$$
\sqrt{t} \|\nabla \mathcal D[W](t)\|_{L^q(\mathcal F_0)} \leq K_2 \|V_0\|_{\mathcal L^q} + K_2 \int_0^t \left(\dfrac{t}{t-s} \right)^{\frac 12} \|(W-\ell_w) \cdot \nabla W\|_{L^q(\mathcal F_0)} {\rm d}s. 
$$
Combining then H\"older inequalities (where $1/q^* = 1/q-1/2$) together with a Gagliardo-Nirenberg inequality (interpolating the $L^{q^*}$-norm between the $L^2$ and $\dot{H}^1$ norms) and the {bound already obtained} on $\|W\|_X$, we bound:
\begin{align*}
& \|W\cdot \nabla W\|_{L^q(\mathcal F_0)} \leq \|W\|_{L^{q^*}(\mathcal F_0)} \|\nabla W\|_{L^2(\mathcal F_0)} \leq s^{\frac 32 - \frac 1q}\|W\|_{X}^2
\\
&  \|\ell_W \cdot \nabla W\|_{L^q(\mathcal F_0)} \leq \sqrt{s} \|W\|_{X} \|W\|_{X_q}  
\end{align*}
Since $s < 1,$ we end up with:
$$
\sqrt{t} \|\nabla \mathcal D[W](t)\|_{L^q(\mathcal F_0)} \leq K_2 \|V_0\|_{\mathcal L^q} + K_2 \int_0^t \left(\dfrac{t\ s}{t-s} \right)^{\frac 12}  {\rm d}s\|V_0\|_{\mathcal L^2} (\|V_0\|_{\mathcal L^2} + \|V_0\|_{\mathcal L^q}).
$$
By a homogeneity argument we have:
$$
\int_0^t \left(\dfrac{t\ s}{t-s} \right)^{\frac 12} \leq C t^{\frac 32},
$$
hence we can choose $T_0$ smaller if necessary (but decreasing in the quantity $\|V_0\|_{\mathcal L^2} + \|V_0\|_{\mathcal L^q}$)  so that:
for $T <T_0$:
$$
\sup_{[0,T]} \sqrt{t} \|\nabla \mathcal D[W](t)\|_{L^q(\mathcal F_0)} \leq K_2 (\|V_0\|_{\mathcal L^q} + \|V_0\|_{\mathcal L^2}).
$$
Finally, $\mathcal D$ maps $B$ into $B.$ With similar computations, we obtain that it is a contraction up to restrict to a smaller $T_0$
again and conclude that we propagate the property $V \in \mathcal L^q$ and $\nabla V \in L^q(\mathcal F_0)$ on a short time-interval. We note that on this time-interval { $\Delta T$}, we have 
\begin{equation} \label{eq_bound_W1q}
\|\nabla W\|_{L^1(0,\Delta T) ; L^2(\mathcal F_0)} \leq \|W\|_X \qquad \|\nabla W\|_{L^1(0,\Delta T) ; L^q(\mathcal F_0)} \leq \|W\|_{X_q} 
\end{equation}

To obtain further that $V \in \mathcal L^q$ and $\nabla V \in L^q(\mathcal F_0)$ for all times we remark that by a standard blow-up alternative, it is sufficient to obtain local 
bounds for $\|V(t)\|_{\mathcal L^q} + \|V(t)\|_{\mathcal L^2}.$ Since this is already known for $\|V(t)\|_{\mathcal L^2}$ we focus here on 
$\|V(t)\|_{\mathcal L^q}.$ To this end, we note that choosing $T_1$ so that $\tilde{K}_5 T_1^{\frac 1q - \frac 12} \|V_0\|_{\mathcal L^2} \leq 1/2$ and applying $\eqref{eq_T1}$ with $V$ we have
$$
\sup_{[0,T_1]} \|V(t)\|_{\mathcal L^q} \leq 2 \|V_0\|_{\mathcal L^q} + \|V_0\|_{\mathcal L^2}.
$$
Furthermore, since our system of equation is autonomous, we can reproduce this computation starting from any $t_0 >0.$ Finally, since 
we already have a uniform bounds for $\|V(t_0)\|_{\mathcal L^2}$ we obtain that there exists a short time increment $T_1$ (independent of the initial data) so that for arbitrary $t_0 >0:$
$$
\sup_{[t_0,t_0+T_1]} \|V(t)\|_{\mathcal L^q} \leq 2 \|V(t_0)\|_{\mathcal L^q} + \|V_0\|_{\mathcal L^2}.
$$
In particular, there can be no blow-up of $\|V(t)\|_{\mathcal L^q}$ in finite-time. Then on a time-interval $[0,T]$ since we have an {\em a priori} bound for $\|V\|_{\mathcal L^q} + \| V \|_{\mathcal L^2},$ we can see our solution as a concatenation of local-in-time solutions
constructed as above on a small-time interval $\Delta T.$ By concatenating the remarks \eqref{eq_bound_W1q}
on the time-intervals $[n\Delta T,(n+1)\Delta T]$ we conclude that 
$$
\nabla V \in L^1((0,T) ; L^2(\mathcal F_0) \cap L^q(\mathcal F_0)).
$$

\medskip

To complete the proof of {\bf Proposition \ref{prop:Lq}}, we show now that $\ell_{V} \in L^2([0,\infty)).$ Since $\nabla V \in L^2([0,\infty)),$ we first remark that:
\begin{equation} \label{eq_Teps}
\forall \, \varepsilon >0,  \quad \exists\, T_{\varepsilon} >0 \text{ s.t. } \int_{T_{\varepsilon}}^{\infty} \|\nabla V\|_{L^2}^2 < \varepsilon.
\end{equation}
Thanks to the representation formula \eqref{eq:Duhamel2}, we have then that, for arbitrary $t >0$ we can split 
$\ell_{v}(t) = \ell_S(t) + \ell_{NL}(t)$ where:
$$
\ell_S(t) = \ell_{S(t)V_0} \quad \ell_{NL}(t) = \ell_{I(t)} 
$$
where
$$
I(t) = \int_0^t S(t-s) \mathbb P[\mathbf 1_{\mathcal F_0} (V -\ell_{V}) \cdot \nabla V]{\rm d}s.
$$
Since $V_0 \in \mathcal L^2 \cap \mathcal L^q$ we apply {\bf Lemma \ref{lem:est_semigroup}} to yield that:
$$
|\ell_S(t)| \leq \min \left( 1, \dfrac{1}{t^{\frac 1q}}\right) \|V_0\|_{\mathcal L^q} \in L^2((0,\infty)).
$$
For the nonlinear term, we apply the duality estimates of {\bf Lemma \ref{lem:est_semigroup}} with $r > 2$. We obtain:
$$
|\ell_{NL}(t)| \leq \int_0^{t} \dfrac{1}{(t-s)^{\frac 12 + { \frac 1r}}} \|(V-\ell_v) \otimes V \|_{L^r(\mathcal F_0)} 
$$
At this point, let fix $T >0$ (sufficiently large) and remark that the right-hand side can be seen as a truncated (time-)convolution of
$1/s^{\frac 12 + \frac 1r}$ and $\|(V-\ell_v) \otimes V \|_{L^r(\mathcal F_0)} \mathbf{1}_{[0,T]}.$ 
By a Hardy-Littlewood-Sobolev inequality, we have then:
\begin{align*}
\|\ell_{NL}\|_{L^2(0,T)} & \leq  \| \ |\cdot|^{-(\frac 12 + \frac 1r)} * \|(V-\ell_v) \otimes V \|_{L^r(\mathcal F_0)}\|_{L^2{(0,T)}} \\
& \leq  C_{r} \left( \int_0^T \|(V-\ell_v) \otimes V\|^p_{L^r(\mathcal F_0)} \right)^{\frac 1p}
\leq C_r  \left( \int_0^T \|V\|^{2p}_{L^{2r}(\mathcal F_0)} \right)^{\frac 1p} + C_{r} \left( \int_0^T \||\ell_v| V\|^p_{L^r(\mathcal F_0)} \right)^{\frac 1p}
\end{align*}
where $p$ is the conjugate exponent of $r.$ For the first-integral on the right-hand side, we apply again a Gagliardo-Nirenberg inequality
and the fact that $p$ is the conjugate exponent of $r$ to yield that:
$$
\int_0^T \|V\|^{2p}_{L^{2r}(\mathcal F_0)}  \leq C_r \sup_{[0,T]}\|V(t)\|^{\frac{2p}{r}}_{\mathcal L^2} \int_0^{T} \|\nabla V\|^2_{L^2} \leq C_r \|V_0\|^{2p}_{\mathcal L^2}
$$
To estimate the last term, we introduce an intermediate time $T_{mid}$ to be fixed later on. We note here that, for arbitrary $0 \leq T_1< T_2$  combining a standard
H\"older inequality and a Gagliardo Nirenberg inequality entails that (since $p<2$):
\begin{align*}
\int_{T_1}^{T_2} \||\ell_v| V\|^p_{L^r(\mathcal F_0)} 
& \leq  \left( \int_{T_1}^{T_2} |\ell_v|^2\right)^{\frac p2} \left( \int_{T_1}^{T_2} \|V\|^{\frac{2p}{2-p}}_{L^{r}}\right)^{1 - \frac p2}\\
& \leq \|\ell_{v}\|^p_{L^2(T_1,T_2)} \sup_{(T_1,T_2)} \|V\|^{\frac{2p}{r}}_{L^2}
\left( \int_{T_1}^{T_2} \|\nabla V\|^{\frac{2p}{2-p}(1-\frac{2}{r})}_{L^2}\right)^{ 1- \frac p2}
\end{align*}
Recalling that $p$ and $r$ are conjugate exponents yield that 
$$
\frac{2p}{2-p}(1-\frac{2}{r}) = 2
$$
and we infer that:
\begin{equation*}
\int_{T_1}^{T_2} \||\ell_v| V\|^p_{L^r(\mathcal F_0)} 
\leq C \|\ell_{v}\|^p_{L^2(T_1,T_2)} \|V_0\|^{\frac{2p}{r}}_{\mathcal L^2}
\left( \int_{T_1}^{T_2} \|\nabla V\|^{2}_{L^2} \right)^{1- \frac p2 }
\end{equation*}
When $T > T_{mid}$, combining the previous computations  between $T_1=0$ and $T_2 = T_{mid}$ and between $T_{1} = T_{mid}$ and $T_2 = T,$ we conclude that:
\begin{multline*}
\|\ell_{NL}\|_{L^2(0,T)} \leq C_r \|V_0\|_{\mathcal L^2}^2 
+ C_r\|\ell_v\|_{L^2(0,T_{mid})} \|V_0\|^{\frac 2r}_{\mathcal L^2} \left( \int_0^{T_{mid}} \|\nabla V\|^2_{L^2} \right)^{\frac 1p - \frac 12}
\\
+  C_r\|\ell_v\|_{L^2(T_{mid},T)} \|V_0\|^{\frac 2r}_{\mathcal L^2} \left( \int_{T_{mid}}^{T} \|\nabla V\|^2_{L^2} \right)^{\frac 1p - \frac 12}.
\end{multline*}
At this point, we recall the remark \eqref{eq_Teps} and choose $T_{mid}$ so that:
$$
C_r\|V_0\|^{\frac{2p}{r}}_{\mathcal L^2} \left( \int_{T_{mid}}^{\infty} \|\nabla V\|^2_{L^2} \right)^{\frac 1p - \frac 12} < \frac 12
$$
Splitting ${\ell_v} = \ell_{S} + \ell_{NL}$ and arguing that, on compact time-interval, we can always control  $|\ell_{v}|$ by 
$\|V\|_{\mathcal L^2} \leq \|V_0\|_{\mathcal L^2},$ we infer that :
$$
\|\ell_v\|_{L^2(0,T)} \leq  \|\ell_S\|_{L^2(0,\infty)} + C_r (1+ \sqrt{T_{mid}})\|V_0\|^2_{\mathcal L^2} +  \frac{1}{2} \|\ell_v\|_{L^2(0,T)}.
$$
Eventually, we conclude that, for arbitrary $T >T_{mid}$ we have:
$$
\|\ell_v\|_{L^2(0,T)} \leq  C_{q,r} \left( \|V_0\|_{\mathcal L^q} + (1+ \sqrt{T_{mid}})\|V_0\|^2_{\mathcal L^2}\right).
$$
This concludes the proof.

\subsection{Proof of Proposition \ref{prop:decay}}
This proof is inspired of \cite[Section 5]{Gallay_Maekawa__NS_obstacle}.
Let $q<2$ and $V \in \mathcal L^2 \cap \mathcal L^q$. We recall that we take $\mu < \frac 1q- \frac 12$ so that 
$\mathcal L^2 \cap \mathcal L^q \subset D(A^{-\mu})$. 
Thanks to {\bf Proposition \ref{prop:Lq}}, we have that the unique finite-energy solution satisfies
\begin{itemize}
\item $V \in C([0,\infty); \mathcal L^q \cap \mathcal L^2) \cap C((0,\infty) ; W^{1,q}(\mathbb R^2) \cap H^1(\mathbb R^2)).$
\item $\mathbf 1_{\mathcal F_0}(V-\ell_v) \cdot \nabla V \in L^1_{loc}((0,\infty) ; L^2(\mathbb R^2) \cap L^q(\mathbb R^2))$
\end{itemize}
In particular, we have:
$$
\partial_t V - AV = \mathbb P ((V - \ell_v) \cdot \nabla V) \qquad V_{|_{t=0}} = V_0
$$
where $V \in C([0,\infty) ; D(A^{-\mu}))$ and $ \mathbb P ((V - \ell_v) \cdot \nabla V) \in L^1_{loc}(0,\infty ; D(A^{-\mu})).$
Consequently, we can apply the operator $A^{-\mu}$ to this equation and we obtain that $U = A^{-\mu} V$ is a mild solution to:
$$
\partial_t U - AU = A^{-\mu} \mathbb P ((V-\ell_v) \cdot \nabla V) \qquad U_{|_{t=0}} = U_0.
$$
We have in particular for arbitrary $t >0$ that
$$
\dfrac{1}{2} \|U(t)\|^2_{\mathcal L^2} + \int_0^{t} \|\nabla U(s)\|^{2}_{L^2} {\rm d}s \leq \int_0^{t} \langle A^{-\mu} \mathbb P((V-\ell_v) \cdot \nabla V) , U \rangle {\rm d}s.
$$
However, for arbitrary $s \in (0,t)$ there holds:
\begin{align*}
\left| \langle A^{-\mu} \mathbb P((V-\ell_v) \cdot \nabla V) , U \rangle\right| &= \left| \int_{\mathcal F_0} ([(V-\ell_v) \cdot \nabla] A^{-\mu} U) \cdot V \diff x \right| \\
& \leq  \Bigl(\|V\|^2_{L^4(\mathcal F_0)} + \|V\|_{L^2} |\ell_v| \Bigr) \|A^{\frac 12-\mu} U\|_{\mathcal L^2} \\
& \leq C\Bigl(|\ell_v| + \|\nabla V\|_{L^2(\mathcal F_0)} \Bigr) \|A^{\mu} U\|_{\mathcal L^2}\|A^{\frac 12-\mu} U\|_{\mathcal L^2},
\end{align*}
where we applied a Gagliardo-Nirenberg inequality to pass from the second to the last line. 
At this point, we argue by interpolation that
\begin{equation*}
    \|A^{\mu} U\|_{\mathcal L^2}\|A^{\frac 12-\mu} U\|_{\mathcal L^2} \leq C \|U\|_{\mathcal L^2}\|A^{\frac 12} U\|_{\mathcal L^2},
\end{equation*}
thus
\begin{equation*}
    C\Bigl(|\ell_v| + \|\nabla V\|_{L^2(\mathcal F_0)} \Bigr) \|U\|_{\mathcal L^2}\|A^{\frac 12} U\|_{\mathcal L^2} \leq C\Bigl(|\ell_v| + \|\nabla V\|_{L^2(\mathcal F_0)} \Bigr)^2 \|U\|_{\mathcal L^2}^2 + \frac{1}{2} \|A^{\frac 12} U\|_{\mathcal L^2}^2.
\end{equation*}
This yields finally that, for all $t \geq 0$:
$$
\|U(t)\|^2_{\mathcal L^2} + \int_0^{t} \|\nabla U(s)\|^{2}_{L^2} {\rm d}s \leq C \int_0^{t} \Bigl(|\ell_v| + \|\nabla V\|_{L^2(\mathcal F_0)}\Bigr)^2\|U (s)\|^2_{\mathcal L^2} {\rm d}s.
$$ 
Eventually a Gronwall lemma yields that:
$$
\|U(t)\|^2_{\mathcal L^2} + \int_0^{t} \|\nabla U(s)\|^{2}_{L^2} {\rm d}s \leq \|U_0\|^2_{\mathcal L^2} \exp\Bigl[ \int_0^t C\Bigl(|\ell_v| + \|\nabla V\|_{L^2(\mathcal F_0)}\Bigr)^2 \Bigr]
$$
Since the integral in the exponential is bounded by Proposition \ref{prop:Lq}, we have then a uniform bound 
$$
\sup_{t >0} \|U(t)\|^2_{\mathcal L^2} + \int_0^{\infty} \|\nabla U(s)\|^{2}_{L^2} {\rm d}s \leq C_0
$$
where the constant $C_0$ depends on the whole solution $V$ (and {\em a priori} not only on $V_0$).

At this point, we argue in the same manner as in \cite[Corollary 4.2]{Gallay_Maekawa__NS_obstacle}. The situation is even more favorable
since we have uniform bounds. Indeed, since $\nabla U \in L^2((0,\infty) ; L^2(\mathbb R^2)),$ we can construct a sequence of times
$t_n$ growing to infinity such that $\|\nabla U(t_n)\|_{L^2} \to 0.$ We have then that $\|A^{1/2} U(t_n)\|_{\mathcal L^2}$ goes to $0$
while $\|U(t_n)\|_{\mathcal L^2}$ remains bounded. By interpolation, $\|V(t_n)\|_{\mathcal L^2} = \|A^{\mu} U(t_n)\|_{\mathcal L^2}$ (where $\mu < 1/2$)  goes also to $0$ as $n$ goes to infinity. This ends the proof.

\appendix
%\newpage
\section{Technical lemmas} \label{app:tech}

We gather in this section technical lemmas used throughout the paper.
We start with handling nonlinearities in the Duhamel formula. { We recall that,
given $t_0 >0$ { and $T >0$ we denote: 
$$
X := C([t_0,t_0+T] ; \mathcal L^2 ) \cap C((t_0,t_0+T] ; H^1(\mathbb R^2) \cap L^{\infty}(B_0))
$$
that we endow with the norm:
$$
\|W\|_X := \sup_{t \geq t_0} \|W(t)\|_{\mathcal L^2} + \sup_{t > t_0} \dfrac{(t-t_0)^{\frac 12}}{\delta} \left( \|\nabla w(t)\|_{L^2(\mathcal F_0)} + |\ell_{W}(t)| \right).
$$
}
Other notations are introduced in Section \ref{sec:Cauchy}.

\begin{lem} \label{lem:bilinear}
Let {$t_0,T> 0.$ }Given  $(W_a,W_b) \in X$ we denote:
$$
F(t) : =\int_{t_0}^t S(t-s) \mathbb P[ \mathbf{1}_{\mathcal F_0}(w_a - \ell_a)\cdot \nabla w_b]{\rm d}s
 \quad \forall \, t \in {(t_0,t_0+T).}
$$
Then there holds:
\begin{itemize}
\item $F \in X$
\item there exists a constant $C >0$ for which:
$$
\|F\|_X \leq C \, { \min (1, \sqrt{\delta}+ T^{\frac 12})} { \|W_a\|_{X} \|W_b\|_X}
$$
\end{itemize}
\end{lem}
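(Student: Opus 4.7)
The plan is to rewrite the nonlinearity in divergence form and fall back on the refined structure of Corollary~\ref{cor_zecor}. Since $w_a - \ell_a$ is divergence-free in $\mathcal{F}_0$ and equals $\omega_a\,x^\perp$ (hence is tangent to $\partial B_0$), I have the pointwise identity
\[
(w_a-\ell_a)\cdot\nabla w_b = \mathrm{div}\bigl(w_b\otimes(w_a-\ell_a)\bigr)\qquad\text{on }\mathcal{F}_0,
\]
and setting $G(s):=w_b(s)\otimes(w_a(s)-\ell_a(s))$, I have $G\cdot n=0$ on $\partial B_0$. The crucial observation is that although $\ell_a$ is a nonzero constant on the unbounded domain $\mathcal{F}_0$ (and hence fails to be in any $L^p$), the product $w_b\otimes\ell_a$ still belongs to $L^2\cap L^4(\mathcal{F}_0)$, because in 2D the $X$-norm together with Gagliardo--Nirenberg forces $w_b\in L^p(\mathcal{F}_0)$ for every $p\in[2,\infty)$. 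Thus $G\in L^2(\mathcal{F}_0)$, and Corollary~\ref{cor_zecor} applies: $S(t-s)\mathbb{P}\mathrm{div}\,G = A^{1/2}S(t-s)[A^{-1/2}\mathbb{P}\mathrm{div}\,G]$ with $\|A^{-1/2}\mathbb{P}\mathrm{div}\,G\|_{\mathcal{L}^2}\le\|G\|_{L^2(\mathcal{F}_0)}$.

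The next step is to quantify $G$ in $L^2$ and $L^4$. Splitting $G = w_b\otimes w_a - w_b\otimes\ell_a$, combining $\|w\|_{L^{2k}} \le C\|W\|_{\mathcal{L}^2}^{1/k}\|\nabla w\|_{L^2}^{1-1/k}$ with the $X$-bounds $\|\nabla w\|_{L^2}\le\delta(s-t_0)^{-1/2}\|W\|_X$ and $|\ell_W|\le\delta(s-t_0)^{-1/2}\|W\|_X$, I expect to derive
\[
\|G(s)\|_{L^2(\mathcal{F}_0)}\le C\delta(s-t_0)^{-1/2}\|W_a\|_X\|W_b\|_X,\quad
\|G(s)\|_{L^4(\mathcal{F}_0)}\le C\delta^{3/2}(s-t_0)^{-3/4}\|W_a\|_X\|W_b\|_X,
\]
where the two $L^4$-bounds come respectively from $\|w_a\|_{L^8}\|w_b\|_{L^8}$ and $|\ell_a|\|w_b\|_{L^4}$, each producing the same $\delta^{3/2}(s-t_0)^{-3/4}$ scaling.

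I then plug these into the Duhamel integral. For $\sup_t\|F(t)\|_{\mathcal{L}^2}$, the identity from Step~1 combined with the semigroup estimate $\|A^{1/2}S(\tau)\|_{\mathcal{L}^2\to\mathcal{L}^2}\le C\tau^{-1/2}$ gives a kernel $(t-s)^{-1/2}(s-t_0)^{-1/2}$ whose Beta-integral over $[t_0,t]$ is finite and yields $\|F(t)\|_{\mathcal{L}^2}\le C\delta\|W_a\|_X\|W_b\|_X$. For the weighted quantity $(t-t_0)^{1/2}|\ell_{F(t)}|/\delta$, I apply the third item of Lemma~\ref{lem:duality_est} with $q=4$, which produces the integrable kernel $(t-s)^{-3/4}$; paired with the $L^4$-bound on $G$, the integral $\int (t-s)^{-3/4}(s-t_0)^{-3/4}\,ds$ scales like $(t-t_0)^{-1/2}B(1/4,1/4)$ and delivers $|\ell_{F(t)}|\le C\delta^{3/2}(t-t_0)^{-1/2}\|W_a\|_X\|W_b\|_X$, i.e.\ a contribution of order $\sqrt{\delta}$ to the $X$-norm. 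The analogous quantity $(t-t_0)^{1/2}\|\nabla f(t)\|_{L^2(\mathcal{F}_0)}/\delta$ is handled by combining the $L^4$-bound on $G$ with the composition $\nabla S(\tau/2)\circ S(\tau/2)\mathbb{P}\mathrm{div}(\cdot)$ through Lemmas~\ref{lem:est_semigroup} and~\ref{lem:duality_est}, once more producing an integrable $(t-s)^{-3/4}$ kernel and a contribution of order $\sqrt{\delta}$. Summing the three contributions and using $\delta\le 1$ gives $\|F\|_X \le C\sqrt{\delta}\|W_a\|_X\|W_b\|_X \le C\min(1,\sqrt{\delta}+T^{1/2})\|W_a\|_X\|W_b\|_X$.

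The main obstacle is the $\ell_a$-term: if one naively splits $w_a-\ell_a$, the piece containing the constant $\ell_a$ escapes every Lebesgue space on the unbounded $\mathcal{F}_0$, so the standard $L^q$-duality machinery of Lemma~\ref{lem:duality_est} breaks down. The resolution is to keep the combination $w_a-\ell_a$ intact---this is precisely what makes $G\cdot n=0$ (via tangency on $\partial B_0$) and keeps $G\in L^2\cap L^4$ thanks to the decay of $w_b$---so that Corollary~\ref{cor_zecor} can be invoked. A secondary, equally important subtlety is that the $\mathcal{L}^2$-duality kernel alone degenerates as $(t-s)^{-1}$ when one tries to recover $\nabla f$ and $\ell_F$, and would make the time integral diverge; leveraging the higher integrability $G\in L^4$ upgrades this kernel to the integrable $(t-s)^{-3/4}$, which is what ultimately delivers the $\sqrt{\delta}$-smallness in the $X$-norm.
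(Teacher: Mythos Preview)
Your treatment of the $\mathcal{L}^2$ part of $\|F\|_X$ and of the $\ell_F$ contribution is correct, and your use of the third item of Lemma~\ref{lem:duality_est} with $q=4$ for $\ell_F$ is in fact cleaner than what the paper does. The gap is in the gradient estimate. You claim that the composition $\nabla S(\tau/2)\circ S(\tau/2)\mathbb{P}\,\mathrm{div}$ applied to $G\in L^4(\mathcal{F}_0)$ produces an $L^2(\mathcal{F}_0)$ bound with kernel $(t-s)^{-3/4}$, but neither Lemma~\ref{lem:est_semigroup} nor Lemma~\ref{lem:duality_est} supplies this. Lemma~\ref{lem:duality_est} with $q=4$ only lands in $\mathcal{L}^p$ for $p\ge 4$; Lemma~\ref{lem:est_semigroup} only gives $\|\nabla S(\tau)V\|_{L^2}$ when $V\in\mathcal{L}^q$ with $q\le 2$. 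There is no route from $\mathcal{L}^4$ (or $L^4$) down to $\nabla$ in $L^2$ on the unbounded domain $\mathcal{F}_0$, and the only available $L^2$-to-$L^2$ bound yields the non-integrable kernel $(t-s)^{-1}$ that you yourself flagged. So your gradient step does not close, and with it the claimed uniform $\sqrt{\delta}$ bound (which would be strictly stronger than the lemma) collapses.

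The paper fixes this by abandoning the divergence form on the dangerous half of the time interval. It splits at $t_{mid}=(t+t_0)/2$: on $[t_0,t_{mid}]$ the divergence form is kept and the extra factor $S(t-t_{mid})$ supplies the missing $(t-t_0)^{-1/2}$; on $[t_{mid},t]$ one writes $(w_a-\ell_a)\cdot\nabla w_b = w_a\cdot\nabla w_b - \ell_a\cdot\nabla w_b$ and estimates the sources directly, the first in $L^{4/3}$ (giving kernel $(t-s)^{-3/4}$) and the second in $L^2$ (kernel $(t-s)^{-1/2}$), both integrable at $s=t$. The $T^{1/2}$ alternative in the $\min$ then comes from a second pass with $t_{mid}=t_0$ and the crude bound $|\ell_a|\le\|W_a\|_{\mathcal{L}^2}$. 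Your $L^4$-duality idea is salvageable for $\ell_F$ but cannot replace this splitting for $\nabla f$.
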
 
We emphasize that, in this lemma, the assumption ${ W_a} \in X$ induces that, for every $s {\in (t_0,t_0+T)},$
{$W_a(s)$} is a rigid motion on $B_0$. Obviously, we denote $\ell_a$ the translation velocity (with respect to the origin) associated with this motion.   

\begin{proof}
We only give a proof of the second item. To this end, we remark that, since $w_a$ is divergence free: 
$$
(w_a - \ell_a) \cdot \nabla w_b = {\rm div} ((w_a - \ell_a) \otimes w_b)\,, \quad \text{on $\mathcal F_0$}. 
$$
Since $(w_a - \ell_a) \cdot n = 0$ on $\partial B$ we can then extend by $0$ to create an $L^2(\mathbb R^d)$-source term which fulfills the assumptions of \cite[Corollary 3.10]{Ervedoza_Hillairet__disk_fluid}.
% 
%\medskip
% 
This yields, for arbitrary { $t \in [t_0,t_0+T]$ (recalling also that $\delta <1$)} 
\begin{align*}
\|F(t)\|_{\mathcal L^2} & \leq K\int_{t_0}^t \dfrac{1}{\sqrt{t-s}} \|(w_a - \ell_a) \otimes w_b \|_{L^2(\mathcal F_0)} \\
& \leq K \int_{t_0}^t \dfrac{1}{\sqrt{t-s}} \left( |\ell_a| \|W_b\|_{\mathcal L^2} + \|W_a\|^{1/2}_{\mathcal L^2} \|\nabla w_a\|^{1/2}_{L^2(\mathcal F_0)} \|W_b\|^{1/2}_{\mathcal L^2} \|\nabla w_b\|^{1/2} _{L^2(\mathcal F_0)}\right) \\
& \leq K \int_{t_0}^{t}  \dfrac{1}{\sqrt{t-s}} \dfrac{{ \delta}}{\sqrt{s-t_0}} {\rm d}s \|W_a\|_{X} \|W_b\|_X.
\end{align*}
Hence, we have $\|F(t)\|_{\mathcal L^2} \leq C{ \delta} \|W_a\|_{X} \|W_b\|_X.$

\medskip

{ For the second part, we first split $F = F_1 + F_2 + F_3$ with $t_{mid} = (t+t_0)/2$ and denote:}
\begin{align*}
F_1(t) & = \int_{t_0}^{t_{mid}} S(t-s) \mathbb P {\rm div} [ \mathbf{1}_{\mathcal F_0} (w_a - \ell_a) \otimes w_b ] {\rm d}s \\
F_2(t) &=    \int_{t_{mid}}^{t} S(t-s) \mathbb P[w_a \cdot \nabla w_b]  {\rm d}s\\
F_3(t)  &=  \int_{t_{mid}}^{t} S(t-s) \mathbb P[\ell_{a} \cdot \nabla w_b]{\rm d}s.
\end{align*}
For the first term, we combine \cite[Corollary 3.10]{Ervedoza_Hillairet__disk_fluid} with standard continuity properties of $S.$ Remarking that:
$$
F_1(t) =  S(t-t_{mid}) \int_{t_0}^{t_{mid}} S(t_{mid}-s) \mathbb P {\rm div} [ \mathbf{1}_{\mathcal F_0} (w_a - \ell_a) \otimes w_b ] {\rm d}s,
$$
we obtain with obvious notations and similar computations that:
\begin{align*}
|\ell_{1}(t)| + \|\nabla F_1(t)\|_{L^2(\mathcal F_0)} & \leq \dfrac{1}{\sqrt{t-t_{mid}}} \norm{\int_{t_0}^{t_{mid}} S(t_{mid}-s) \mathbb P {\rm div} [ \mathbf{1}_{\mathcal F_0} (w_a - \ell_a) \otimes w_b ] {\rm d}s}_{\mathcal L^2} \\
& \leq \dfrac{C}{\sqrt{t-t_0}} \int_{t_0}^{t_{mid}} \dfrac{1}{\sqrt{t_{mid}-s}} \|(w_a - \ell_a) \otimes w_b \|_{L^2(\mathcal F_0)} \\
&  \leq \dfrac{C{ \delta}}{\sqrt{t-t_0}}  \|W_a\|_{X} \|W_b\|_{X}.
\end{align*}
For the other terms, we apply standard continuity properties of $S.$ First we note that 
$w_{a} \cdot \nabla w_b \in L^{4/3}(\mathbb R^2)$ with:
\begin{align*}
\|w_{a} \cdot \nabla w_b\|_{L^{4/3}(\mathbb R^2)} & \leq \|w_a\|^{\frac 12}_{\mathcal L^2}\|\nabla w_a\|_{L^2(\mathbb R^2)}^{\frac 12} \|\nabla w_{b}\|_{L^2(\mathbb R^2)}  \\
& \leq \dfrac{{ \delta^{\frac 32}}}{(t-t_0)^{3/4}} \|w_a\|_{X} \|w_b\|_{X}
\end{align*}
Since $\mathbb P : L^{4/3}(\mathbb R^2) \to \mathcal L^{4/3}$ is bounded (see \cite[Remark 2.4]{WangXin2011}), we infer that:
\begin{align*}
|\ell_{2}(t)| + \|\nabla F_2(t)\|_{L^2(\mathcal F_0)} & \leq  K { \delta^{\frac 32}}
\int_{t_{mid}}^{t} \dfrac{1}{(t-s)^{3/4}} \dfrac{1}{(t-t_0)^{3/4}} {\rm d}s \|w_a\|_{X} \|w_b\|_{X} \\
& \leq \dfrac{C{ \delta^{\frac 32}}}{\sqrt{t-t_0}} \|W_a\|_{X} \|W_b\|_{X}.
\end{align*}
Finally, we bound (applying the standard continuity of $\mathbb P : L^2(\mathbb R^2) \to \mathcal L^2 $)
\begin{align*}
|\ell_{3}(t)| + \|\nabla F_3(t)\|_{L^2(\mathcal F_0)} & \leq  K 
\int_{t_{mid}}^{t} \dfrac{1}{\sqrt{t-s}} |\ell_a| \|\nabla w_b\|_{L^2(\mathbb R^2)} {\rm d}s \\
& \leq K \int_{t_{mid}}^{t} \dfrac{{ \delta^2}}{\sqrt{t-s}} \dfrac{{\rm d}s}{s-t_0} \|W_a\|_{X} \|W_b\|_{X} \\
& \leq \dfrac{C{ \delta^2}}{\sqrt{t-t_0}} \|W_a\|_{X} \|W_b\|_{X}.  
\end{align*}

{ Finally,  we split again $F = F_1 + F_2 + F_3$ with $t_{mid} =t_0$ so that $F_1 =0$.  We remark then that similar estimate holds for $(F_2,\ell_2)$ while, for $F_3,$ we note that we can bound $|\ell_a| \leq \|W_a\|_{\mathcal L^2}$ to obtain:
\begin{align*}
|\ell_{3}(t)| + \|\nabla F_3(t)\|_{L^2(\mathcal F_0)} & \leq  K 
\int_{t_0}^{t} \dfrac{1}{\sqrt{t-s}} |\ell_a|  \|\nabla w_b\|_{L^2(\mathbb R^2)} {\rm d}s \\
& \leq K \int_{t_0}^{t} \dfrac{{ \delta}}{\sqrt{t-s}} \dfrac{{\rm d}s}{\sqrt{s-t_0}}  \|W_a\|_{X} \|W_b\|_{X} \\
& \leq C \delta \|W_a\|_{X} \|W_b\|_{X}.  
\end{align*}
This concludes the proof.
}

\end{proof}  

%\newpage
\section{Proof of Proposition \ref{prop:fractional}} \label{app:fractional}

We provide here a proof of Proposition \ref{prop:fractional}. 
To estimate $(\tilde{\mathcal A} +\varepsilon)^{-\mu}$ as required in the content of {Proposition \ref{prop:fractional}} we rely on the integral representation (because $\tilde{\mathcal A}$ is a positive selfadjoint operator, see \cite[Section 2.6]{Pazy}):
\begin{equation} \label{eq_repfract}
(\tilde{\mathcal A} + \varepsilon)^{-\mu} = \dfrac{\sin(\pi \mu)}{\pi}\int_{0}^{\infty} \dfrac{1}{(\lambda+\varepsilon)^{\mu}} (\tilde{\mathcal A} + \lambda+\varepsilon)^{-1}{\rm d}\lambda .
\end{equation}
In order to construct $R_{\mu,\varepsilon}$ we work at first on  a construction of $(\tilde{\mathcal A} + \lambda)^{-1}$ involving $(\tilde{\mathcal A}_0 + \lambda)^{-1}$ for $\lambda >0.$ 
To this end, we introduce objects that are crucial to the analysis. 

\medskip

We recall here basics on some modified Bessel functions. 
The following statements are taken from \cite[Section 8]{Olver}.
The function $K_0: (0,\infty) \to \mathbb R$ is the unique smooth solution to:
$$
- \dfrac{1}{r}\frac{\textrm{d}}{{\rm d}r}\left[ r \frac{\textrm{d}}{{\rm d}r} K_0(r)\right]  + K_0(r) = 0 \quad \forall \, r >0,
$$
that behaves asymptotically like:
$$
K_0(r) \sim 
\{
\begin{aligned}
& \sqrt{\dfrac{\pi}{2r}} \exp(-r) & \text{when $r \to \infty$} \\
& - \ln(r) & \text{ when $r \to 0$.}
\end{aligned}
\right. 
$$
Furthermore, all derivatives of $K_0$ enjoy the same decay at infinity as $K_0$ and $K_0'(r) \sim -1/r$ in $0$.
We mention also that $K_0 \geq 0$ and $K'_0 \leq 0$ on $(0,\infty)$ (see \cite[Theorem 8.1]{Olver}).
Similarly, $K_1:(0,\infty) \to \mathbb R$ is the smooth solution to:
$$
- \dfrac{1}{r}\frac{\textrm{d}}{{\rm d}r}\left[ r \frac{\textrm{d}}{{\rm d}r} K_1(r)\right]  + \left(1+ \dfrac{1}{r^2} \right)K_1(r) = 0 \quad \forall \, r >0,
$$ 
that has the asymptotic expansion:
$$
K_1(r) \sim 
\{
\begin{aligned}
& \sqrt{\dfrac{\pi}{2r}} \exp(-r) & \text{when $r \to \infty$} \\
& \dfrac{1}{r} & \text{ when $r \to 0$}
\end{aligned}
\right. 
$$
We have again that $K_1 \geq 0$ and $K'_1 \leq 0$ on $(0,\infty)$, that the derivatives 
of $K_1$ enjoy the same decay as $K_1$ at infinity and $K'_1(r) \sim -1/r^2$ in $0$.

\medskip

Then, for arbitrary $\lambda >0,$ we define $\phi_{\lambda} : \mathbb R^2 \to \mathbb R$
by:
$$
\phi_{\lambda}(x) = 
\{
\begin{aligned}
& \dfrac{K_0(\sqrt{\lambda}|x|)}{\lambda K_0(\sqrt{\lambda}) -  \dfrac{2\pi}{m} \sqrt{\lambda}K_0'(\sqrt{\lambda})} & \text{ if $|x| > 1$}\\
&  \dfrac{K_0(\sqrt{\lambda})}{\lambda K_0(\sqrt{\lambda}) -  \dfrac{2\pi}{m}\sqrt{\lambda} K_0'(\sqrt{\lambda})}  & \text{ if $|x| \leq 1$}
\end{aligned}
\right.
$$
and $\psi_{\mu} : \mathbb R^2 \to \mathbb R$ by
$$
\psi_{\lambda}(x) = 
\{
\begin{aligned}
& \dfrac{K_1(\sqrt{\lambda}|x|)}{ (2\mathcal{J}^{-1} + \lambda) K_1(\sqrt{\lambda}) -  \pi \mathcal J^{-1}  \sqrt{\lambda}K'_1(\sqrt{\lambda})}  & \text{ if $|x| > 1$}\\
&  \dfrac{K_1(\sqrt{\lambda}) \, \abs{x}}{ (2\mathcal{J}^{-1} + \lambda) K_1(\sqrt{\lambda}) -  2 \pi \mathcal J^{-1}  \sqrt{\lambda}K'_1(\sqrt{\lambda})}  & \text{ if $|x| \leq 1$}
\end{aligned}
\right.
$$
{ We recall that the symbols $m$ and ${\cal J}$ appearing in these formulas stand resepctively for the mass and inertia of the disk.}
The aim of this construction is the following proposition:
\begin{prop}
Let $\lambda >0.$ Given $(F,\tau) \in \mathbb R^2 \times \mathbb R,$ let define:
$$
V_{\lambda}[F,\tau](x) = \phi_{\lambda}(x) F + \psi_{\lambda}(x) \tau \dfrac{x^{\bot} }{|x|}  \quad \forall \, x \in \mathbb R^2.
$$
Then there holds $V_{\lambda}[F,\tau] \in \mathcal{D}(\tilde{\mathcal A})$ with:
$$
(\tilde{\mathcal A}+ \lambda) V_{\lambda}[F,\tau] = (F + \tau x^{\bot}  ) \mathds{1}_{B_0}.
$$
\end{prop}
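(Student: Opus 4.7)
The plan is to verify the identity by direct computation, combining the modified Helmholtz ODEs satisfied by $K_0$ and $K_1$ with the algebraic structure of the denominators appearing in $\phi_\lambda$ and $\psi_\lambda$. The verification splits into three tasks: membership of $V_\lambda[F,\tau]$ in $\mathcal D(\tilde{\mathcal A})$, the identity $(-\Delta + \lambda) V_\lambda = 0$ on $\mathcal F_0$, and the reproduction of $F + \tau x^\bot$ from the integral expression of $\tilde{\mathcal A}$ on $B_0$.

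First, I would check membership in $\mathcal D(\tilde{\mathcal A}) = L^2[B_0] \cap [H^1(\mathbb R^2)]^2 \cap [H^2(\mathcal F_0)]^2$. On $B_0$ the inner formulas yield $V_\lambda[F,\tau](x) = c_1 F + c_2 \tau \, x^\bot$ for explicit positive constants $c_1,c_2$, so $V_\lambda \in L^2[B_0]$ with $\ell_{V_\lambda} = c_1 F$ and $\omega_{V_\lambda} = c_2 \tau$. The inner and outer expressions of $\phi_\lambda$, respectively $\psi_\lambda$, are normalized so as to coincide at $|x|=1$; this continuity gives $V_\lambda \in H^1(\mathbb R^2)$. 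Interior regularity on $\mathcal F_0$ follows from the smoothness and exponential decay at infinity of the modified Bessel functions and their derivatives.

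Second, I would verify $(-\Delta + \lambda) V_\lambda = 0$ on $\mathcal F_0$. Each Cartesian component of $\phi_\lambda(x) F$ is a scalar radial multiple of $K_0(\sqrt{\lambda}|x|)$, and direct substitution into the radial Laplacian $\Delta = \partial_r^2 + r^{-1}\partial_r$ turns the ODE $-r^{-1}(r K_0')' + K_0 = 0$ into $(-\Delta + \lambda)[K_0(\sqrt{\lambda}\,|\cdot|)] = 0$. For the rotational part, writing $x^\bot/|x| = e_\theta$ in polar coordinates, $\psi_\lambda(x)\tau \, x^\bot/|x|$ has the form $f(r)\tau\, e_\theta$, and the 2D vector Laplacian of a purely tangential, $\theta$-independent field reads $\bigl(f''(r) + r^{-1} f'(r) - r^{-2} f(r)\bigr) e_\theta$. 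The extra $-r^{-2} f$ contribution is precisely what the $r^{-2}$ term in the $K_1$-ODE absorbs, so $f(r) = K_1(\sqrt{\lambda}\,r)$ yields $(-\Delta + \lambda)(f\tau\, e_\theta) = 0$.

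Third, I would check the boundary integral conditions. Computing $\partial_n V_\lambda$ on $\partial B_0$ from the outer formulas, with $n$ inward to $B_0$ as per the convention of the paper, the $e_\theta$-directed contributions drop out of both $\int_{\partial B_0} \partial_n V_\lambda \diff\sigma$ and $\int_{\partial B_0} z^\bot \cdot \partial_n V_\lambda \diff\sigma$ thanks to $\int_0^{2\pi} e_\theta \diff\theta = 0$; what remains is linear in $K_0'(\sqrt{\lambda})$ for the translation and in $K_1'(\sqrt{\lambda})$ for the rotation. Substituting these boundary integrals into
\[
\tilde{\mathcal A} V_\lambda + \lambda V_\lambda = \tfrac{1}{m}\int_{\partial B_0}\partial_n V_\lambda \diff\sigma + \mathcal J^{-1}\Bigl( \int_{\partial B_0} z^\bot \cdot \partial_n V_\lambda \diff\sigma + 2 \omega_{V_\lambda} \Bigr) y^\bot + \lambda V_\lambda,
\]
evaluated on $B_0$, and separating the $F$- and $\tau x^\bot$-components, the resulting scalar identities become tautological precisely because the denominators in $\phi_\lambda$ and $\psi_\lambda$ were engineered to make them so. The main source of friction is the careful tracking of the inward-normal orientation together with the matching of the inner and outer denominators needed to enforce $H^1$-regularity; once these are in place the proof is a direct verification.
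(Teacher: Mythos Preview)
Your proposal is correct and follows essentially the same approach as the paper: both verify domain membership via continuity at $|x|=1$ and Bessel-function decay, invoke the $K_0$ and $K_1$ ODEs to get $(-\Delta+\lambda)V_\lambda=0$ on $\mathcal F_0$, and then compute the boundary integrals (using the orthogonality/symmetry that kills the cross terms) to recover $\ell=F$ and $\omega=\tau$ from the structure of the denominators. The paper's write-up differs only cosmetically, spelling out the explicit boundary-integral identities rather than appealing to the denominators being ``engineered''.
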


\begin{proof}
Let $\lambda >0$ and $(F,\tau) \in \mathbb R^2 \times \mathbb R.$ For the proof, we denote
$V =V_{\lambda}[F,\tau]$ for legibility. By construction, $\psi_{\lambda}$ and $\phi_{\lambda}$ are continuous on $\mathbb R^2.$ Furthermore, since $K_0,K_1$ are smooth and decay exponentially at infinity, we have that $\phi_{\lambda},\psi_{\lambda} \in H^2(\mathcal F_0).$ The explicit values for $\phi_{\lambda}$ and $\psi_{\lambda}$ when $r <1$ yield also that, on $B(0,1),$
 we have:
$$
V(x) =  \dfrac{K_0(\sqrt{\lambda})}{\lambda K_0(\sqrt{\lambda}) - \frac{2\pi}{m} \sqrt{\lambda} K'_0(\sqrt{\lambda})} F +  \dfrac{K_1(\sqrt{\lambda})\tau}{(2\mathcal{J}^{-1}+\lambda) K_1(\sqrt{\lambda}) - 2\pi \mathcal J^{-1}\sqrt{\lambda} K'_1(\sqrt{\lambda})}  x^{\bot} \text{ on $B_0$}.
$$
Finally, we obtain that $V \in { L^2[B_0] \cap [H^1(\mathbb R^2)]^2}$ and thus that $V \in \mathcal D(\tilde{\mathcal A}).$

\medskip

We go now to polar coordinates $(r,\theta)$ and exploit the ODE satisfied by $K_0,K_1$ to obtain that 
$$
-\Delta  V + \lambda V = (\tilde{\mathcal A}  +\lambda)V = 0  \quad\text{ in $\mathcal F_0$}.
$$
This is why we introduced Bessel functions.
%(thus the introduction of Bessel functions).% : remark that in terms of Fourier modes in the angular variable$\theta$, $K_0$ and $K_1$ are respective eigenvectors for the Laplace operator on $\mathcal F_0$ associated with the eigenvalue $1$ in the mode $0$ while $K_1$ is the solution for the mode $1$) 
While, in $B_0,$ we have:
$$
 (\tilde{\mathcal A}  +\lambda) V= \ell + \omega y^{\bot}
$$
with
\begin{align*}
\ell &= \dfrac{1}{m} \int_{\partial B_0} \partial_n v {\rm d}\sigma + \dfrac{ \lambda K_0(\sqrt{\lambda})}{\lambda K_0(\sqrt{\lambda}) - \frac{2\pi}{m} \sqrt{\lambda}K'_0(\sqrt{\lambda})} F  \\
\omega &= \mathcal J^{-1} \left( \int_{\partial B_0} z^{\bot} \partial_nv {\rm d\sigma} +  \dfrac{2 K_1(\sqrt{\lambda})\tau}{(2+\lambda) K_1(\sqrt{\lambda}) - 2\pi \mathcal J^{-1} \sqrt{\lambda}K'_1(\sqrt{\lambda})}   \right) \\
& \quad \quad +   \dfrac{\lambda K_1(\sqrt{\lambda})\tau}{(2\mathcal{J}^{-1}+\lambda) K_1(\sqrt{\lambda}) - 2 \pi \mathcal J^{-1} \sqrt{\lambda}K'_1(\sqrt{\lambda})}. 
\end{align*}

Going again to polar coordinates $(r,\theta),$ we note that $\partial_n = - \partial_r$ and 
that $z^{\bot} = (-\sin(\theta),\cos(\theta)).$ For symmetry reasons, we thus have  that:
\begin{align*}
\int_{\partial B_0} \partial_n v {\rm d}\sigma & = - \dfrac{2 \pi  \sqrt{\lambda } K'_0(\sqrt{\lambda})}{\lambda K_0(\sqrt{\lambda}) - \dfrac{2\pi}{m} \sqrt{\lambda}K'_0(\sqrt{\lambda})} F \\ 
\int_{\partial B_0} z^{\bot} \partial_n v {\rm d}\sigma & = - \dfrac{2\pi  \sqrt{\lambda } K'_1(\sqrt{\lambda})}{(2\mathcal{J}^{-1}+\lambda) K_1(\sqrt{\lambda}) -2  \pi \mathcal J^{-1} \sqrt{\lambda}K'_1(\sqrt{\lambda})} \tau 
\end{align*}
Introducing these identities in the above computations of $\ell$ and $\omega$, we end up with $\ell = F$ and $\omega = \tau.$ 
This concludes the proof.
\end{proof}

We combine now this construction with the operator $\tilde{\mathcal A}_0$ to compute the
resolvant of $\tilde{\mathcal A}.$ Given $\lambda >0$ and $W \in L^2[B_0]$, we have:
$$
W = (\ell_{W} + \omega_{W} y^{\bot}) \mathds{1}_{B_0} + w \mathds{1}_{\mathcal F_0}. 
$$
Consider $V^{(0)}_{\lambda}[W] = (\tilde{\mathcal{A}}_0 + \lambda)^{-1} ( w \mathds{1}_{\mathcal F_0}).$
We have $V^{(0)}_{\lambda} \in \mathcal{D}(\tilde{\mathcal A}_0) \subset \mathcal{D}(\tilde{\mathcal A})$ so that we can compute $\tilde{\mathcal A}V^{(0)}_{\lambda}$:
$$
( \tilde{\mathcal A} +\lambda)(V^{(0)}_{\lambda}[W])  = 
\left[ \dfrac{1}{m} \left( \int_{\partial B_0} \partial_n v^{(0)}_{\lambda}[W] {\rm d}\sigma \right) +  \mathcal{J}^{-1} \left( \int_{\partial B_0} z^{\bot} \cdot \partial_n v^{(0)}_{\lambda}[W] {\rm d}\sigma \right) y^{\bot}\right] \mathds{1}_{B_0} + w\mathds{1}_{\mathcal F_0}.
$$   
Consequently, we correct the value on $B_0$ by setting:
\begin{equation} \label{eq_calculefforts}
F_{\lambda}^{(0)}[W] := \dfrac{1}{m} \left( \int_{\partial B_0} \partial_n v^{(0)}_{\lambda}[W] {\rm d}\sigma \right), \quad 
\tau_{\lambda}^{(0)}[W] :=  \mathcal{J}^{-1} \left( \int_{\partial B_0} z^{\bot} \cdot \partial_n v^{(0)}_{\lambda}[W] {\rm d}\sigma \right),
\end{equation}
and
\begin{equation} \label{eq_Vlambda}
\tilde{V}_{\lambda}[W] := V^{(0)}_{\lambda}[W] +  V_{\lambda}[\ell_{W}-F_{\lambda}^{(0)}[W] ,\omega_{W} - \tau_{\lambda}^{(0)}[W]].
\end{equation}
By linearity, we obtain that $\tilde{V}_{\lambda} \in \mathcal{D}(\tilde{\mathcal A})$ satisfies:
$$
(\tilde{\mathcal A} + \lambda) \tilde{V}_{\lambda}[W] = W. 
$$
and is the unique one by injectivity of $\tilde{\mathcal A}+ \lambda.$

\medskip

With this construction at-hand, we are in position to prove {Proposition \ref{prop:fractional}}.  
\begin{proof}[Proof of Proposition \ref{prop:fractional}]
Fix $q \in (1,2)$ and $\mu < \mu_{crit} =  1/q -1/2.$ Let $\varepsilon >0$ and $W \in L^2[B_0].$ Plugging \eqref{eq_Vlambda} into \eqref{eq_repfract}
we obtain that
\begin{multline*}
(\tilde{A} + \varepsilon )^{-\mu} = \dfrac{\sin(\pi \mu)}{\pi} \int_0^{\infty} \dfrac{1}{(\lambda+ \varepsilon)^{\mu}} (\tilde{\mathcal A}_0 + \varepsilon +\lambda )^{-1}( \mathbf{1}_{\mathcal F_0} W) {\rm d}{\lambda}  \\
+  \dfrac{\sin(\pi \mu)}{\pi} \int_0^{\infty} \dfrac{1}{(\lambda+ \varepsilon)^{\mu} }
{ V_{\varepsilon + \lambda}}[\ell_W - F_{\lambda+\varepsilon}^{0}[W],\omega_{W} + \tau_{\lambda+ \varepsilon}^{0}[W]]{\rm d}\lambda.
\end{multline*}
Thus, we have the expected representation formula with:
$$
R_{\mu,\varepsilon}W = \dfrac{\sin(\pi \mu)}{\pi} \int_0^{\infty} \dfrac{1}{(\lambda+ \varepsilon)^{\mu} 
}{ V_{\varepsilon + \lambda}}[\ell_W - F_{\lambda+\varepsilon}^{(0)}[W],\omega_{W} + \tau_{\lambda+ \varepsilon}^{(0)}[W]]{\rm d}\lambda.
$$
To complete the proof, it remains to obtain \eqref{eq_controlfrac}. For this, we first bound
by introducing the explicit value of ${V_{\varepsilon+\lambda}}$:
\begin{align*}
\|R_{\mu,\varepsilon} W\|_{\mathcal L^2}
\leq C \int_0^{\infty} \dfrac{1}{(\lambda+ \varepsilon)^{\mu}} \|\phi_{\lambda+\varepsilon}\|_{L^2}(|\ell_{W}| + |F^{(0)}_{\lambda+\varepsilon}[W]|){\rm d}\lambda 
\\
 +C\int_0^{\infty} \dfrac{1}{(\lambda+ \varepsilon)^{\mu}}  \|\psi_{\lambda+\varepsilon}\|_{L^2}(|\omega_{W}| + |\tau^{(0)}_{\lambda+\varepsilon}[W]|) {\rm d}\lambda.
\end{align*}
We note here that the constant $C$ appearing in the right-hand side depends only on the physical parameters of the system. We denote by $C$ such constants below. They can depend on the physical parameters or on the data $q,\mu.$ They can also vary between lines.

We proceed by estimating the two integrals independently. For the first one, let denote:
$$
K(s) = \dfrac{1}{s^{\mu}} \|\phi_{s}\|_{L^2(\mathbb R^d)}(|\ell_{W}| + |F^{(0)}_{s}[W]|)
$$
By looking at the explicit value of $\phi_{s}$, we have:
$$
 \|\phi_{s}\|_{L^2(\mathbb R^d)}
 \leq \dfrac{C}{s K_0(\sqrt{s}) - \sqrt{s} K'_0(\sqrt{s})} \left( K_0(\sqrt{s}) + \dfrac{1}{\sqrt{s}}\left(\int_{\sqrt{s}}^{\infty} |K_0(\alpha)|^2\alpha{\rm d}\alpha \right)^{\frac 12}\right).
$$
and, with $q'$ the conjugate exponant of $q:$
\begin{equation} \label{eq_controlF}
|F_{s}^{(0)}[W]| \leq C \dfrac{\|w\|_{L^q(\mathcal F_0)}}{K_0(\sqrt{s})s^{\frac 1{q'}} } \left( \int_{\sqrt{s}}^{\infty} |K_{0}(\alpha)|^{q'}\alpha{\rm d}\alpha \right)^{\frac 1{q'}} .
\end{equation}
We postpone the proof of this latter inequality to the end of the appendix. 

\medskip

When $s \in (0,1)$ the asymptotics of $K_0$ and $K_0'$
ensure that $K_0 \in L^p ((0,\infty))$ for all $p \geq 1$ and that
\begin{align*}
K(s) & \leq \dfrac{C}{s^{\mu}}\dfrac{\|W\|_{L^q}}{\sqrt{s}|K_0'(\sqrt{s})|} \left(1+ \left(s^{\frac 1{q'}}K_0(\sqrt{s})\right)^{-1}\right) \left(K_0(\sqrt{s}) + \dfrac{1}{\sqrt{s}}\right)\\
& \leq \dfrac{C\|W\|_{L^q}}{s^{\mu-\mu_{crit}+1}|\ln(s)|},
\end{align*}
where $1/s^{\mu-\mu_{crit}+1}|\ln(s)|\in  L^1((0,1))$ since $\mu - \mu_{crit} < 0.$ While, when $s \in (1,\infty)$, the same asymptotics guarantee that
{(remember that $q'>2$ to bound $\alpha^{1-q'/2} \leq s^{1/2-q'/4}$ for $\sqrt{s}<\alpha$ )}:
\begin{multline*}
K(s) \leq \dfrac{C\|W\|_{L^q} }{s^{\mu}} 
\dfrac{\left(1+ \exp(\sqrt{s}) s^{ - \frac 1{2q'}} (\int_{\sqrt{s}}^{\infty} \exp (-q'\alpha) {\rm d}\alpha)^{\frac 1{q'}}\right)}{s^{\frac 34}\exp(-\sqrt{s})} \dots \\
\dots \left( \dfrac{\exp(-\sqrt{s})}{s^{\frac 14}} + \dfrac{1}{\sqrt{s}} \left( \int_{\sqrt{s}}^{\infty}\exp(-2\alpha){\rm d}\alpha \right)^{\frac 12}\right)
\end{multline*}
and finally $K(s) \leq C\|W\|_{L^q(\mathbb R^2)}  s^{-\mu - \frac 1{2q'} - 1} \in L^1(1,\infty).$
Hence, we have a uniform bound $C$ independent of $\varepsilon \in (0,1)$ such that:
$$
 \int_0^{\infty} \dfrac{1}{(\lambda+\varepsilon)^{\mu}} \|\phi_{\lambda+\varepsilon}\|_{L^2(\mathbb R^d)}(|\ell_{W}| + |F^{(0)}_{\lambda+\varepsilon}[W]|){\rm d}\lambda  \leq \int_0^{\infty} K(s){\rm d}s  \leq   C \|W\|_{L^q} .
$$ 

\medskip
For the second integral we denote similarly:
$$
\tilde{K}(s)  =\dfrac{1}{s^{\mu}}  \|\psi_{s}\|_{L^2(\mathbb R^2)}(|\omega_{W}| + |\tau^{(0)}_{s}[W]| )
$$
With the explicit form of $\psi_s$ we have:
$$
\|\psi_{s}\|_{L^2(\mathbb R^2)} \leq \dfrac{C}{(1+s)K_1(\sqrt{s}) - \sqrt{s} K_1'(\sqrt{s})} \left( |K_1(\sqrt{s})| + \dfrac{1}{\sqrt{s}}\left( \int_{\sqrt{s}}^{\infty} |K_1(\alpha)|^2 \alpha {\rm d}\alpha \right)^{\frac 12}\right).
$$
and 
\begin{equation} \label{eq_controltau}
|\tau_{s}^{(0)}[W]|  \leq C \dfrac{\|w\|_{L^q(\mathcal F_0)}}{K_1(\sqrt{s})s^{\frac 1{q'}} } \left( \int_{\sqrt{s}}^{\infty} |K_{1}(\alpha)|^{q'}\alpha{\rm d}\alpha\right)^{\frac 1{q'}} 
\end{equation}

When $s \in (1,\infty)$, $K_0$ and $K_1$ admit a similar exponential bound, so we obtain with similar arguments as previously that $\tilde{K}$ is dominated by an $L^1$-function multiplied by $\|W\|_{L^q(\mathbb R^2)}$.
When $s \in (0,1)$ we proceed more carefully but similarly again. We have $|K_1(\alpha)| \leq 1/\alpha$  when $\alpha <1.$ Hence, we compute that:
$$
\int_{\sqrt{s}}^{\infty} |K_1(\alpha)|^{2}\alpha{\rm d}\alpha \leq {C(1+| \ln(s)|)}, \qquad \int_{\sqrt{s}}^{\infty} |K_1(\alpha)|^{q'}\alpha{\rm d}\alpha \leq \dfrac{1}{s^{\frac {q'}{2} - 1}}.
$$ 
Consequently:
$$
|\tilde{K}(s)| \leq { C(1+ |\ln(s)|)^{\frac 12} \|W\|_{L^q(\mathbb R^2)}.}  
$$
We conclude like previously.
\end{proof}  

To end up this section, we provide a proof of identities \eqref{eq_controlF}-\eqref{eq_controltau} . This is the content of the following proposition:

\begin{prop}
Let $\lambda >0$ and $q \in (1,2)$. 
There exists a constant $C$ depending only on the physical parameters and $q$ such that, given $W \in L^2[B_0] \cap [L^q(\mathbb R^2]^2$ we have:
\begin{align*}
|F_{\lambda}^{(0)}[W]| & \leq C \dfrac{\|w\|_{L^q(\mathcal F_0)}}{K_0(\sqrt{\lambda})\lambda^{\frac 1{q'}} } \left( \int_{\sqrt{\lambda}}^{\infty} |K_{0}(s)|^{q'}s{\rm d}s\right)^{\frac 1{q'}} \\
|\tau_{\lambda}^{(0)}[W]| & \leq C \dfrac{\|w\|_{L^q(\mathcal F_0)}}{K_1(\sqrt{\lambda})\lambda^{\frac 1{q'}} } \left( \int_{\sqrt{\lambda}}^{\infty} |K_{1}(s)|^{q'}s{\rm d}s\right)^{\frac 1{q'}} 
\end{align*}
where $F_{\lambda}^{(0)}[W]$ and $\tau_{\lambda}^{(0)}[W]$ are defined in \eqref{eq_calculefforts}
and $q'$ is the conjugate exponent of $q.$
\end{prop}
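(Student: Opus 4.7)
My plan is to compute each boundary integral appearing in \eqref{eq_calculefforts} by a duality argument based on Green's identity. For any $\ell \in \mathbb R^2$ and $\tau \in \mathbb R$, I would first construct on $\mathcal F_0$ two auxiliary vector fields $U_\ell$ and $U_\tau$ which, componentwise, satisfy $(-\Delta+\lambda)U=0$, take prescribed Dirichlet data on $\partial B_0$ (namely $U_\ell|_{\partial B_0}=\ell$ and $U_\tau|_{\partial B_0}=\tau x^\perp$), and decay exponentially at infinity. Pairing such $U$ with $v^{(0)}_\lambda=(\tilde{\mathcal A}_0+\lambda)^{-1}(w\mathds{1}_{\mathcal F_0})$ in Green's formula and using that $v^{(0)}_\lambda=0$ on $\partial B_0$, I would obtain $\int_{\partial B_0}(\partial_n v^{(0)}_\lambda)\cdot U\,\mathrm d\sigma = \pm\int_{\mathcal F_0} w\cdot U\,\mathrm dx$, which I would then estimate by H\"older's inequality.

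The construction of $U_\ell$ is immediate: set
\begin{equation*}
U_\ell(x) := \frac{K_0(\sqrt\lambda|x|)}{K_0(\sqrt\lambda)}\,\ell,
\end{equation*}
so that the ODE of $K_0$ together with the radial action of $\Delta$ on a scalar radial profile gives $(-\Delta+\lambda)U_\ell=0$ in $\mathcal F_0$, together with the prescribed trace and exponential decay. For the angular counterpart my candidate is
\begin{equation*}
U_\tau(x) := \tau\,\frac{K_1(\sqrt\lambda|x|)}{K_1(\sqrt\lambda)}\,\frac{x^\perp}{|x|}.
\end{equation*}
Writing $x^\perp/|x|=(-\sin\theta,\cos\theta)$ in polar coordinates and using that the \emph{Cartesian} Laplacian applied to $h(r)\cos\theta$ (respectively $h(r)\sin\theta$) produces $(h''+h'/r - h/r^2)\cos\theta$ (resp.\ $\sin\theta$), i.e.\ precisely the combination annihilated by the $K_1$-ODE at parameter $\sqrt\lambda$, one checks $(-\Delta+\lambda)U_\tau=0$ componentwise while the boundary and decay conditions are built in. This short verification is the only nontrivial computation and it explains the appearance of $K_1$ rather than $K_0$ in the $\tau$-sector, matching the ansatz already used for $\psi_\lambda$.

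The remainder is routine. H\"older gives $|\int_{\mathcal F_0} w\cdot U\,\mathrm dx|\leq \|w\|_{L^q(\mathcal F_0)}\|U\|_{L^{q'}(\mathcal F_0)}$, while passing to polar coordinates and performing the change of variable $s=\sqrt\lambda r$ yields
\begin{equation*}
\|U_\ell\|_{L^{q'}(\mathcal F_0)} = \frac{(2\pi)^{1/q'}|\ell|}{K_0(\sqrt\lambda)\,\lambda^{1/q'}}\left(\int_{\sqrt\lambda}^{\infty} K_0(s)^{q'}\,s\,\mathrm ds\right)^{1/q'},
\end{equation*}
and an identical computation for $U_\tau$ (the factor $|x^\perp/|x||=1$ leaves no additional contribution). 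Finally I would pick $\ell$ of unit length parallel to $\int_{\partial B_0}\partial_n v^{(0)}_\lambda\,\mathrm d\sigma$ (respectively $\tau=\pm 1$) to dualize both traces, and absorb $1/m$, $\mathcal J^{-1}$ and $(2\pi)^{1/q'}$ into the constant $C$; the two announced bounds follow at once.

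The only genuine obstacle is the Bessel-ODE check for $U_\tau$ outlined above; regularity is not an issue because, for $w\in L^2\cap L^q$, classical elliptic theory places $v^{(0)}_\lambda$ in $H^2(\mathcal F_0)$, so that $\partial_n v^{(0)}_\lambda$ has a well-defined trace on $\partial B_0$ and the exponential decay of $K_0, K_1$ makes both test fields admissible in Green's identity.
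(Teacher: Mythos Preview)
Your proposal is correct and follows essentially the same approach as the paper: the paper constructs exactly your test fields $U_\ell$ and $U_\tau$ (denoted $u$ there), verifies $(-\Delta+\lambda)u=0$ with the prescribed trace via the $K_0$/$K_1$ ODEs, applies Green's identity using $v^{(0)}_\lambda|_{\partial B_0}=0$ to reduce the boundary integral to $-\int_{\mathcal F_0} w\cdot u$, and finishes with H\"older and the polar change of variables $s=\sqrt\lambda r$. The only cosmetic difference is that the paper writes out the $\tau$-case in detail and remarks that the $F$-case is similar, while you treat both in parallel.
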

\begin{proof}
We provide a proof of the second inequality. The first one is obtained with a similar construction based on $K_0$. 

\medskip

Let $\omega \in \mathbb R$ and 
$$
u(x) = \dfrac{K_{1}(\sqrt{\lambda}|x|)}{K_1(\sqrt{\lambda})} \dfrac{\omega x^{\bot}}{|x|} \quad \forall \, x \in \mathcal F_0.
$$
By construction, we have:
$$
-\Delta u + \lambda u = 0 \text{ on $\mathcal F_0$}, \quad u(x) = \omega x^{\bot} \text{ on $\partial B_0$}.
$$
Introducing the latter identity into the definition of $\tau_{\lambda}^{(0)}[W]$, we derive: 
$$
\tau_{\lambda}^{(0)}[W]\omega = \int_{\partial B_0} \partial_n v^{(0)}_{\lambda}[W] u {\rm d}\sigma,
$$
so that we can integrate by parts. Recalling that $v_{\lambda}^{(0)}[W] = V_{\lambda}^{(0)}[W]\mathds{1}_{\mathcal F_0}$ satisfies a specific PDE and vanishes on $\partial B_0$, then using the PDE satisfied by $u$, we deduce successively that:
\begin{align*}
\tau_{\lambda}^{(0)}[W]\omega & = \int_{\mathcal F_0} \Delta v^{(0)}_{\lambda} \cdot u + \int_{\mathcal F_0} \nabla v_{\lambda}^{(0)} :\nabla u \\
& = - \int_{\mathcal F_0} w \cdot u + \int_{\mathcal F_0} \lambda v^{(0)}_{\lambda} \cdot u - \int_{\mathcal F_0}  v^{(0)}_{\lambda} \cdot \Delta u   \\
&= - \int_{\mathcal F_0} w \cdot u .
\end{align*}
Via a standard H\"older inequality and homogeneity arguments we thus infer that:
$$
|\tau_{\lambda}^{(0)}[W] \omega| \leq \dfrac{\|w\|_{L^{q}(\mathcal F_0)} |\omega|}{K_1(\sqrt{\lambda})\lambda^{\frac 1{q'}}} \left( \int_{\sqrt{\lambda}}^{\infty} |K_{1}(s)|^{q'}s{\rm d}s\right)^{\frac 1{q'}} .
$$
Since $\omega$ is arbitrary, this concludes the proof.
\end{proof}

{\bf Acknowledgements.} The second author acknowledges support of the Institut Universitaire de France and project "SingFlows" ANR-grant number: ANR-18-CE40-0027.  The second author would also like to thank Luc Hillairet for pointing reference [10] on special functions and for enlightening discussions about reference \cite{KozonoOgawa-decay}.

\end{document}